\tikzset{
  LabelStyle/.style = {minimum width = 2em, 
                        text = red, font = \bfseries },
  VertexStyle/.append style = { inner sep=2pt,
                                font = \Large\bfseries, fill},
  EdgeStyle/.append style = {->, bend left} }
\newtheorem{thm}{Theorem}[section]
\numberwithin{equation}{section} 
\numberwithin{figure}{thm} 
\theoremstyle{plain}
\newtheorem*{thm*}{Theorem}
\theoremstyle{definition}
\theoremstyle{plain}
\newtheorem{thm_A}{Theorem}
\newtheorem*{defn*}{Definition}
\theoremstyle{plain}
\theoremstyle{plain} 
\theoremstyle{plain}
\newtheorem{prop}[thm]{Proposition} 
\theoremstyle{definition}
\newtheorem{ex}[thm]{Example}
\theoremstyle{remark}
\newtheorem{rem}[thm]{Remark}
\theoremstyle{plain}
\theoremstyle{plain}
\theoremstyle{plain}
\newtheorem{lem}[thm]{Lemma}
\newtheorem*{lem*}{Lemma} 
\theoremstyle{definition}
\newtheorem{defn}[thm]{Definition}
\newtheorem*{acknowledgment}{Acknowledgment}
\newtheorem*{acknowledgment*}{Addentum}
\newtheorem*{ques}{Question}
\theoremstyle{plain}
\newtheorem*{ex*}{Example}
\theoremstyle{plain}
\begin{document}
\pgfdeclarelayer{background}
\pgfsetlayers{background,main}
\title{The Herzog-Sch\"onheim conjecture for finitely generated groups}
\author{Fabienne Chouraqui}

\date{}

\maketitle
\begin{abstract}
Let $G$ be a group and $H_1$,...,$H_s$ be subgroups of $G$ of  indices $d_1$,...,$d_s$ respectively. In 1974, M. Herzog and J. Sch\"onheim conjectured that if $\{H_i\alpha_i\}_{i=1}^{i=s}$,  $\alpha_i\in G$, is a coset partition of $G$, then $d_1$,..,$d_s$ cannot be distinct. We consider the  Herzog-Sch\"onheim conjecture for free groups of finite rank and develop a new combinatorial approach, using covering spaces. We define $Y$ the space of coset partitions of $F_n$  and show $Y$ is a metric space with interesting properties. We give some sufficient conditions on the coset partition that ensure the  conjecture is satisfied   and moreover has a neighborhood $U$ in $Y$ such that all the partitions in $U$ satisfy also the conjecture.
\end{abstract}
\maketitle
\section{Introduction}
Let $G$ be a group and $H_1$,...,$H_s$ be subgroups of $G$.  If there exist  $\alpha_i\in G$ such that $G= \bigcup\limits_{i=1}^{i=s}H_i\alpha_i$, and the sets  $H_i\alpha_i$, $1 \leq i \leq s$,  are pairwise disjoint, then  $\{H_i\alpha_i\}_{i=1}^{i=s}$ is \emph{a coset partition of $G$}  (or a \emph{disjoint cover of $G$}). In this case,    all the subgroups  $H_1$,...,$H_s$ can be assumed to be of  finite index in  $G$ \cite{newman,korec}. We denote by $d_1$,...,$d_s$ the indices of $H_1$,...,$H_s$ respectively. The coset partition $\{H_i\alpha_i\}_{i=1}^{i=s}$ has  \emph{multiplicity} if $d_i=d_j$ for some $i \neq j$. The  Herzog-Sch\"onheim conjecture is true for the group $G$, if any coset partition of $G$ has multiplicity.

\setlength\parindent{10pt}	If $G$ is the infinite cyclic group $\mathbb{Z}$, a coset partition of $\mathbb{Z}$ is 
$\{d_i\mathbb{Z} +r_i\}_{i=1}^{i=s}$, $r_i \in \mathbb{Z}$,  with  
each $d_i\mathbb{Z} +r_i$ the residue class of $r_i$ modulo $d_i$.
These coset partitions of $\mathbb{Z}$ were first introduced by P. Erd\H{o}s \cite{erdos1} and he conjectured that if $\{d_i\mathbb{Z} +r_i\}_{i=1}^{i=s}$, $r_i \in \mathbb{Z}$, is a  coset partition of $\mathbb{Z}$, then  the largest index $d_s$ appears at least twice.  Erd\H{o}s' conjecture was 
proved independently by H. Davenport, L. Mirsky, D. Newman and R.Rado using analysis of complex function \cite{erdos2,newman,znam}. Furthermore, it was proved that  the largest index $d_s$ appears at least $p$ times,  where $p$ is the smallest prime dividing $d_s$ \cite{newman,znam,sun2}, that each index $d_i$  divides another index $d_j$, $j\neq i$, and  that each index $d_k$ that does not properly divide any other index  appears at least twice \cite{znam}. We refer also to \cite{por1,por2,por3,por4,sun3} for more details on  coset partitions of $\mathbb{Z}$ (also called covers of $\mathbb{Z}$ by arithmetic progressions) and to \cite{ginosar} for a proof of the Erd\H{o}s' conjecture using group representations. 

In 1974, M. Herzog and J. Sch\"onheim extended Erd\H{o}s' conjecture for arbitrary groups and conjectured that if $\{H_i\alpha_i\}_{i=1}^{i=s}$,  $\alpha_i\in G$, is a  coset partition of $G$, then $d_1$,..,$d_s$ cannot be distinct. In the 1980's, in a series of papers,  M.A. Berger, A. Felzenbaum and A.S. Fraenkel studied  the Herzog-Sch\"onheim conjecture \cite{berger1, berger2,berger3} and in \cite{berger4} they proved the conjecture is true for the pyramidal groups, a subclass of the finite solvable groups. Coset partitions of finite groups with additional assumptions on the subgroups of the partition have been extensively studied. We refer to \cite{brodie,tomkinson1, tomkinson2,sun}. In \cite{schnabel}, the authors very recently proved that the conjecture is true for all groups of order less than $1440$. 

	The common approach to the Herzog-Sch\"onheim conjecture is to study it in finite groups. Indeed, given any group $G$, every coset partition of $G$ induces a coset partition of a particular finite  quotient group of $G$ with the same indices (the quotient of $G$  by the intersection of the normal cores of the subgroups from the partition)  \cite{korec}. In this paper, we adopt a completely  different approach to the   Herzog-Sch\"onheim conjecture. Instead of  finite groups,  we consider  free groups of finite rank and develop a new approach to the problem.  This approach has two advantages. First, it permits to  study the   Herzog-Sch\"onheim conjecture in the  free groups of finite rank for their own sake and secondly it provides some answers for every finitely generated  group.  Indeed, we show any coset partition of a finitely generated  group  $G$ induces a  coset partition of $F$ with the same indices. \\

In order  to  study the   Herzog-Sch\"onheim conjecture in  free groups of finite rank, we  use the machinery of covering spaces. The  fundamental group of the  bouquet with $n$ leaves (or the wedge sum of $n$ circles),  $X$,  is $F_n$, the  free group of finite rank $n$. As  $X$ is a ``good'' space (connected, locally path connected and semilocally $1$-connected), $X$ has a  universal covering  which can be identified with the Cayley graph of $F_n$,  an infinite simplicial tree. Furthermore, there exists a  one-to-one correspondence between the subgroups of $F_n$ and the covering spaces (together with a chosen point) of $X$.  Using  these  covering spaces, J. Stallings  gave  a topological proof of some classical results about finitely generated subgroups of $F_n$ and  introduced the notion of folding \cite{stallings}.  \\

 For any  subgroup $H$ of $F_n$ of finite index $d$, there exists  a $d$-sheeted covering space  $(\tilde{X}_H,p)$  with a fixed basepoint, which is also a combinatorial object.  Indeed, the underlying graph of $\tilde{X}_H$  is a directed labelled graph, with $d$ vertices,  that can be seen as  a finite complete bi-deterministic automaton; fixing the start and the end state at the basepoint, it recognises the set of elements in $H$.  It is   called \emph{the Schreier coset diagram  for $F_n$ relative to the subgroup  $H$} \cite[p.107]{stilwell} or  \emph{the Schreier automaton for $F_n$ relative to the subgroup $H$} \cite[p.102]{sims}. The $d$ vertices (or states) correspond to the $d$ right cosets of $H$,  each edge (or transition) $Hg \xrightarrow{a}Hga$, $g \in F_n$, $a$ a generator of $F_n$,  describes the right action of $a$ on  $Hg$.   \emph{The transition group $T$} of the Schreier automaton for $F_n$ relative to  $H$  describes the action of $F_n$ on the set of the $d$ right cosets of $H$, and is generated by $n$ permutations. The group    $T$ is a subgroup of $S_d$ such that   $T \simeq\,^{F_n}\big/_{N_H}$, where $N_H= \bigcap\limits^{}_{g \in F_n}g^{-1}Hg$ is the normal core of $H$.     \\

 Given $H \leq F_n$ of index $d$,  we consider the  covering $(\tilde{X}_H,p)$ of $X$ with basepoint $\tilde{x}_0$, from several points of view in parallel: as a covering, as a  Schreier coset diagram  for $F_n$ relative to the subgroup  $H$, as a  complete automaton. We use  the following terminology. Let $\tilde{X}_H$ denote  the Schreier coset diagram  for $F_n$ relative to the subgroup  $H$. Let $\tilde{x}_0,..., \tilde{x}_{d-1}$ be the $d$ vertices in  $\tilde{X}_H$. Let   $t_i$  denote the label of a minimal path from $\tilde{x}_0$ to any vertex $\tilde{x}_i$, $1\leq i \leq d-1$. Let $\mathscr{T}=\{1, t_i\mid 1 \leq i\leq d-1\}$.  As $\tilde{X}_H$ is the Schreier coset diagram  for $F_n$ relative to the subgroup  $H$,  $\tilde{x}_0$ represents the subgroup $H$ and $\tilde{x}_1,...,\tilde{x}_{d-1}$ represent the cosets  $Ht_i$ accordingly.  We call $\tilde{X}_{H}$  \emph{the  Schreier graph  of $H$},  with this correspondence between the vertices  $\tilde{x}_0, \tilde{x}_1,...,\tilde{x}_{d-1}$ and the cosets  $H$, $Ht_1$,...,$Ht_s$ accordingly. \\

 The intuitive idea behind our approach  is  as follows. Let  $\{H_i\alpha_i\}_{i=1}^{i=s}$ be a coset  partition of $F_n$, $n \geq 2$,  with $H_i<F_n$ of index $d_i>1$, $\alpha_i \in F_n$, $1 \leq i \leq s$. Let $w \in F_n$.   For every Schreier  graph $\tilde{X}_{i}$, $1 \leq i \leq s$,  we define a  graph  $\tilde{X}_{i}(w)$ that describes  the action of $w$ on the vertices of $\tilde{X}_{i}$. We denote by $o_{*i}(w)$ the minimal natural number, $1 \leq o_{*i}(w) \leq d_i$, such that $w^{o_{*i}(w)}$  is  a loop  at the vertex $H_i\alpha_i$. Combining in some way the $s$  graphs, $\tilde{X}_{1}(w)$,...,$\tilde{X}_{s}(w)$, we define a new finite  graph, \emph{the HS-colored  graph}, to describe graphically the coset partition of $F_n$. \\

We show the HS-colored  graph is the disjoint union of  loops of the same length, 
 that satisfies many interesting properties. In particular, we show that each   loop $\ell$ describes  a coset  partition of $\mathbb{Z}$, with subgroups of indices $\{o_{*i}(w) \mid i \in I_{\ell}\}$, where $I_{\ell} \subseteq\{1,2,...,s\}$ and we can then apply  the results on the Erd\H{o}s' conjecture. Indeed, if 
$o_{max}(w)=max\{o_{*i}(w) \mid 1 \leq  i \leq s\}$, then  $o_{*i}(w)=o_{max}(w)$,  for at least $p$ values of $i$, where 
$p$ is the smallest prime dividing $o_{max}(w)$. Since each $o_{*i}(w)$ is the length of a cycle in some permutation in $T_{i}$, we can formulate our results in terms of cycles in the permutation groups $T_1$,...,$T_s$.  In particular, if  $o_{max}=max\{o_{max}(w) \mid w \in F_n\}=d_s$, we have the following first result. 




 \begin{thm_A}\label{theo0}
 Let $F_n$ be the free group on $n \geq 2$ generators. Let $\{H_i\alpha_i\}_{i=1}^{i=s}$ be a coset  partition of $F_n$ with $H_i<F_n$ of index $d_i$, $\alpha_i \in F_n$, $1 \leq i \leq s$, and $1<d_1 \leq ...\leq d_s$.  Let $\tilde{X}_{i}$ denote the  Schreier  graph of $H_i$, with  transition group  $T_i$, $1 \leq i\leq s$.
     If there exists a $d_s$-cycle in $T_s$, then  the index $d_s$ appears in the partition at least  $p$ times, where $p$ is the smallest prime dividing $d_s$.  Furthermore, all the subgroups $H_i$ with same index  are isomorphic.
  \end{thm_A} 
The transition group    $T_s$ is a subgroup of the symmetric group $S_{d_s}$, generated by $n \geq 2$ permutations. Dixon proved that the probability that a random pair of  elements of $S_n$ generate $S_n$ approaches $3/4$ as $n \rightarrow\infty$, and the probability that they generate $A_n$ approaches $1/4$ \cite{dixon}. As  $d_s\rightarrow\infty$, the probability that $T_s$ is the symmetric group $S_{d_s}$ approaches $3/4$. So, asymptotically,  the probability that there exists a $d_s$-cycle in $T_s$ is greater than $3/4$. If  $T_s$ is cyclic,  there exists a $d_s$-cycle in $T_s$, since $d_s$ divides the order of $T_s$. That is, Theorem \ref{theo0} is satisfied with very high probability and the conjecture is ``asymptotically satisfied with probability greater than $3/4$''  for free groups of finite rank. \\

 Theorem \ref{theo1} provides a list of conditions on a coset partition  that ensure multiplicity. Let  $k=max\{o_{max}(w) \mid w \in F_n\}$,  $k$ is the  maximal length of a cycle in $\bigcup\limits_{i=1}^{i=s}T_i$. Let $p$ denote the smallest prime dividing $k$.  We show there exists  $u \in F_n$ such that  $o_{max}(u)=k$ and $\# \geq 2$, where   $\#=\mid \{1 \leq i\leq s \mid o_{*i}(u)=k\}\mid$. Using this notation, we have the following result. 
   \begin{thm_A}\label{theo1}
  Let $F_n$ be the free group on $n \geq 2$ generators. Let $\{H_i\alpha_i\}_{i=1}^{i=s}$ be a coset  partition of $F_n$ with $H_i<F_n$ of index $d_i$, $\alpha_i \in F_n$, $1 \leq i \leq s$, and $1<d_1 \leq ...\leq d_s$.  Let $r$ be an integer, $4 \leq r \leq s-1$.
     If $k$, $p$ and $\#$, as defined above, satisfy one of the following conditions:
  \begin{enumerate}[(i)]
  \item  $k>d_{s-2}$.
 \item    $k>d_{s-3}$,  $p\geq 3$.
   \item    $k>d_{s-3}$,  $p=2$, and $\#=2$ or $\# \geq 4$.
   \item  $k>d_{s-r}$ and   $p\geq r$, or  $\#=p$, or   $\# \geq r+1$.  
    \end{enumerate} 
  Then the coset partition $\{H_i\alpha_i\}_{i=1}^{i=s}$ has multiplicity. 
  \end{thm_A}
Assuming that  $k > d_1$,  there is a finite number of  cases that are not covered by Theorem \ref{theo1}. For
    $k>d_{s-3}$, there is only one  case which is not covered: $p=2$,  $\#=3$.  
   For $k>d_{s-4}$, there are three cases which are not covered: $p=2$, $\#=3,4$ and
   $p=3$, $\#=4$. As $r$ grows, the number of cases not covered by Theorem \ref{theo1} grows also, but as $s$ is finite, the total number of cases not covered by Theorem \ref{theo1} is finite.\\

  In the following Theorem, we give a  condition that ensures  the same  subgroup appears at least twice in the partition. 
   \begin{thm_A}\label{theo2}
 Let $F_n$ be the free group on $n \geq 2$ generators. Let $\{H_i\alpha_i\}_{i=1}^{i=s}$ be a coset  partition of $F_n$ with $H_i<F_n$ of index $d_i>1$, $\alpha_i \in F_n$, $1 \leq i \leq s$.\\  \noindent  If one of the following conditions holds:
  \begin{enumerate}[(i)]
  \item There exist  $1 \leq j,k \leq s$ such that $\bigcap \limits_{i=1}^{i=s}\alpha_i^{-1}H_i\alpha_i \subsetneqq \bigcap \limits_{i\neq j,k}\alpha_i^{-1}H_i\alpha_i$.
   \item There exist  $1 \leq j,k \leq s$ such that $\ell cm(d_j,d_k)$ does not divide the index $[F_n: \bigcap \limits_{i\neq j,k}\alpha_i^{-1}H_i\alpha_i]$.
      \end{enumerate} 
  Then $\{H_i\alpha_i\}_{i=1}^{i=s}$ has multiplicity. Furthermore,  $H_j=H_k$.
  \end{thm_A}
 
 Inspired by \cite{sikora}, in which the author defines the space of left orders of a left-orderable group and show it is a compact and totally disconnected metric space,  we define $Y$ to be  the space of coset partitions of $F_n$ (under some equivalence relation) and  show $Y$ is a metric space. In our case, the metric defined  induces the discrete topology. 
 \begin{thm_A}\label{theo3}
   Let $F_n$ be the free group on $n \geq 2$ generators. Let $Y$  be  the space of coset partitions of $F_n$ (under some equivalence relation). 
   Then $Y$ is a metric space with a metric $\rho$ and $Y$ is   (topologically) discrete 
   \end{thm_A}
We  show that for  each coset partition of $F_n$, which satisfies one of the conditions in Theorems \ref{theo0} or \ref{theo1},  there exists a  neighborhood $U$ in $Y$ such that all the coset partitions in $U$ have multiplicity.

\begin{thm_A}\label{theo4}
   Let $F_n$ be the free group on $n \geq 2$ generators. Let $Y$  be  the space of coset partitions of $F_n$ (under some equivalence relation) with metric $\rho$. Let $P_0=\{H_i\alpha_i\}_{i=1}^{i=s}$ be in $Y$, with  $1<d_1 \leq ...\leq d_s$. 
\begin{enumerate}[(i)]
\item If  $P_0$ satisfies the condition of Theorem \ref{theo0}, then every $P \in Y$ with $\rho(P, P_0)< \frac{1}{2}$ satisfies  the same condition and hence has multiplicity.
\item If  $P_0$ satisfies $(i)$ or $(ii)$ of Theorem \ref{theo1}, with some $2 \leq r \leq s-1$, then every $P \in Y$ with $\rho(P, P_0)< 2^{-(r+1)}$ satisfies  the same condition and hence has multiplicity.
\item If  $P_0$ satisfies $(iii)$ or $(iv)$ of Theorem \ref{theo1}, with some $2 \leq r \leq s-1$, then every $P \in Y$ with $\rho(P, P_0)< 2^{-(r+1)}$ has multiplicity.
\end{enumerate} 
 \end{thm_A}

Theoerem \ref{theo4} implies that given  a ``small''  subgroup $H_s$ (or some ``small'' subgroups) satisfying one of the conditions cited above,  then for  every  $\alpha_s \in F_n$, any completion of   $H_s\alpha_s$,  to a coset partition of $F_n$ with $H_s$ being the ``smallest'' (greatest index) subgroup has multiplicity.\\

We now turn to finitely generated groups in general.
 \begin{thm_A}\label{theo5}
  Let $G \simeq\, ^{F_n}\big/_{K}$,  with canonical epimorphism $\pi: F_n \rightarrow G$.  Let $\{K_ig_i\}_{i=1}^{i=s}$ be a coset  partition of $G$ with $K_i<G$ of index $d_i>1$, $g_i \in G$, $1 \leq i \leq s$. Let $H_i=\pi^{-1}(K_i)$ and $\alpha_i=\pi^{-1}(g_i)$.
  Then there exists  $\{H_i\alpha_i\}_{i=1}^{i=s}$ an induced coset  partition of $F_n$,  with $H_i<F_n$ of index $d_i$, $\alpha_i \in F_n$, $1 \leq i \leq s$.  Furthermore, if for  $\{H_i\alpha_i\}_{i=1}^{i=s}$, one of the conditions from  Theorem \ref{theo0}, \ref{theo1}, or  \ref{theo2} is satisfied, then the coset partition $\{K_ig_i\}_{i=1}^{i=s}$ has multiplicity.
  \end{thm_A}
    The paper is organized as follows.  The two first sections are background sections: Section $1$ on free groups and covering spaces and  Section $2$ on graphs and automatons. In Section $3$, we describe the equivalent approaches to covering spaces in the special case of free groups and in particular we present the Schreier graph and its properties. In Section $4$, we give a graphical description of  the coset partition of $F_n$ in terms of the Schreier graphs, and in particular 
       in the case of $\mathbb{Z}$. Sections $6$ is devoted to the proofs of the main results. In Section $7$, 
 we introduce the space of coset partitions of $F_n$, an  action of $F_n$ on it, a metric  and prove Theorems \ref{theo2}, \ref{theo3} and \ref{theo4}. In Section 8, we consider the case of any finitely generated group and prove Theorem \ref{theo5}.    In many places, we write the HS conjecture instead of  the Herzog-Sch\"onheim conjecture.

\begin{acknowledgment}
 I am very grateful to Yuval Ginosar for introducing me to the Herzog-Sch\"onheim conjecture,   to Luis Paris for suggesting me  to study the conjecture in free groups, and to Aryeh Juhasz for his useful comments on the paper.
\end{acknowledgment}


\section{Free Groups and Covering spaces}\label{sec_FREE}

\subsection{Free Groups}
For more details, we refer the reader to the books \cite[Chapter 1]{lyndon}, \cite[Section 1.4]{magnus}, \cite[Chapter 1]{baumslag}.
Let $X$ be a non-empty  set. The group generated by $X$ is
 $\langle X\rangle =\{ x_{1}^{\epsilon_1} x_{2}^{\epsilon_2}..x_{n}^{\epsilon_n}\mid x_{i} \in X, \epsilon_i=\pm 1,n\geq0 \}$, this  is  the set of all words on $X$.
 If $X$ is finite, the group generated by $X$ is \emph{finitely generated}. A word $w=x_{1}^{\epsilon_1} x_{2}^{\epsilon_2}..x_{n}^{\epsilon_n}$ is \emph{reduced}  
if $x_{i}=x_{i+1}$ implies $\epsilon_i + \epsilon_{i+1}\neq 0$ and in this case  \emph{the length of $w$}, $\ell(w)$, is equal to $n$.  A word $w=x_{1}^{\epsilon_1} x_{2}^{\epsilon_2}..x_{n}^{\epsilon_n}$ is \emph{cyclically reduced}  
if $x_{1}=x_{n}$ implies $\epsilon_1 + \epsilon_{n}\neq 0$. If $F=\langle X \rangle$ and every non-empty reduced word $w \neq 1$ in $F$, then we say that  \emph{$X$ is a free set of generators of $F$} or that \emph{$F$ is free with base  $X$}. It holds that all bases for a given free group have the same cardinal and this common cardinal is termed \emph{the rank of the free group $F$} and denoted by $rank(F)$. The free group of rank $n$ is denoted by $F_n$ and $F_n \simeq F_m$ if and only if $n=m$. \\

The Nielsen-Schreier theorem states that every  subgroup of a free group is free. If $H$ is a  subgroup of $F_n$ of finite index $d$, then
$H$ is a free group of rank $d(n-1)+1$ and $H$ has non-trivial intersection with every non-trivial subgroup of $F_n$.
If $K$ is a  subgroup of $F_n$ of same  index $d$, then $K \simeq H$. Note that a finitely generated subgroup of $F_n$ is not necessarily of  finite index and that there exist infinitely generated subgroups in $F_n$. A set $S$  of representatives of the right cosets of a subgroup $H$ in a group $G$ generated by $X$ is called a \emph{(right) Schreier transversal} if whenever $x_{1}^{\epsilon_1} x_{2}^{\epsilon_2}..x_{n}^{\epsilon_n} \in S$, $x_i \in X$, then $x_{1}^{\epsilon_1} x_{2}^{\epsilon_2}..x_{n-1}^{\epsilon_{n-1}}$ is also in $S$. If $H$ is a subgroup of the free group $F_n$, then there exists a right Schreier transversal of $H$ in $F_n$.
If $G$ is a  group generated by a set of $n$ of its elements, then $G\simeq\,^{F_n}\big/_{N}$,  with $N \lhd F_n$.
If  $G\simeq\,^{F_n}\big/_{N}$, then the group $G$ has a presentation $\langle X \mid R \rangle$, where $X$ is a base of $F_n$ and the normal  closure of the set $R$ is  equal to $N$. If $X$ and $R$ are finite, the group $G$ is \emph{finitely presented}. 
\subsection{Covering spaces}
For more details, we refer the reader to the books \cite[Chapter 10]{rotman}, \cite[Chapter 1]{hatcher}, \cite[Chapter 3]{lyndon}, \cite{stilwell},  \cite{geogh}.
If $X$ is a topological space, then $(\tilde{X},p)$ 
is a  \emph{covering space} of $X$ if $\tilde{X}$ is a path connected topological space, $p: \tilde{X}\rightarrow X$ is an open continuous surjection and each point $x \in X$ has an open neighborhood $U_x$ such that $p^{-1}(U_x)$ is a disjoint union of  open sets in $\tilde{X}$, each of which is mapped homeomorphically onto $U_x$ by $p$. If $(\tilde{X},p)$ 
is a covering space of $X$, then $X$ is path connected. For each $x \in X$, the non-empty set $Y_x=p^{-1}(x)$ is called \emph{the fiber over $x$} and for all $x,x' \in X$, $\mid Y_x \mid =\mid Y_{x'}\mid $. The cardinal of a fiber is called \emph{the multiplicity} of the covering. One also says $(\tilde{X},p)$  is a \emph{$m$-sheeted covering} (\emph{$m$-fold cover}) of $X$.\\

 If $X$ and $\tilde{X}$ are spaces and there is a continuous map $p:\tilde{X} \rightarrow X$, with $p(\tilde{x}_0)=x_0$,  then there is an  induced homomorphism $p_*:\pi_1(\tilde{X},\tilde{x}_0) \rightarrow \pi_1(X,x_0)$. If $[f]$ is a loop class in $\tilde{X}$, then  $p_*([f])=[p(f)]$ \cite[Chapter 3]{rotman}.

\begin{thm}\cite[p.279-281]{rotman}\label{thm_cov_inject}
Let  $(\tilde{X},p)$ 
be a covering space of $X$. Let $G=\pi_1(X,x_0)$. Let $Y_0$ denote the fiber over $x_0$ and let  $\tilde{x}_{0} \in Y_0$. Then
\begin{enumerate}[(i)]
\item The induced homomorphism $p_*:\pi_1(\tilde{X},\tilde{x}_0) \rightarrow G$ is an injection.
\item $G$  acts transitively on $Y_0$.
\item  The stabilizer of $\tilde{x}_0$ is  $p_*(\pi_1(\tilde{X},\tilde{x}_{0}))$.
\item $\mid Y_0 \mid =[ G:p_*(\pi_1(\tilde{X},\tilde{x}_{0}))]$.
\item $p_*(\pi_1(\tilde{X},\tilde{x}_0))$ and  $p_*(\pi_1(\tilde{X},\tilde{x}_1))$ are conjugates subgroups of $G$,  for every $\tilde{x}_1 \in Y_0$.
\item If $K \leq G$ is conjugate to $p_*(\pi_1(\tilde{X},\tilde{x}_0))$, then there exists $\tilde{x}_1\in Y_0$ such that $K=p_*(\pi_1(\tilde{X},\tilde{x}_1))$.
\end{enumerate} 
\end{thm}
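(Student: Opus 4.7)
The plan is to base everything on the two standard lifting lemmas for covering spaces, which I would state (or cite) first: unique path lifting (any path $\gamma$ in $X$ starting at $x_0$ lifts uniquely to a path $\tilde\gamma$ in $\tilde X$ starting at a prescribed $\tilde x \in Y_0$) and unique homotopy lifting (a path homotopy in $X$ lifts to a path homotopy in $\tilde X$). These are proved in the usual way by covering $[0,1]$ or $[0,1]^2$ with finitely many intervals/squares whose images lie in evenly covered neighbourhoods $U_x$, and then lifting one piece at a time using the sheet homeomorphisms. With these two tools in hand, I would define the right action of $G = \pi_1(X,x_0)$ on $Y_0$ by $\tilde x\cdot [\gamma] = \tilde\gamma_{\tilde x}(1)$, where $\tilde\gamma_{\tilde x}$ is the unique lift of $\gamma$ starting at $\tilde x$; homotopy lifting makes this well defined on homotopy classes, and concatenation of lifts shows it is an action.

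For (i) I would take $[\tilde f] \in \ker p_*$, so $p\circ \tilde f$ is path-homotopic in $X$ to the constant loop; lifting that homotopy through $\tilde f$ yields a path homotopy in $\tilde X$ from $\tilde f$ to a constant, forcing $[\tilde f]=1$. For (ii), pick $\tilde x_1 \in Y_0$ and a path $\tilde\gamma$ in $\tilde X$ from $\tilde x_0$ to $\tilde x_1$ (which exists since $\tilde X$ is path connected); then $\gamma = p\circ\tilde\gamma$ is a loop in $X$ at $x_0$, and by construction $\tilde x_0\cdot[\gamma] = \tilde x_1$. For (iii), a class $[\gamma]$ fixes $\tilde x_0$ iff its unique lift starting at $\tilde x_0$ ends at $\tilde x_0$, iff $[\gamma] = [p\circ\tilde\gamma] = p_*[\tilde\gamma]$ for some loop $\tilde\gamma$ at $\tilde x_0$; this is exactly $p_*(\pi_1(\tilde X,\tilde x_0))$. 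Then (iv) is the orbit-stabiliser formula applied to (ii) and (iii).

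For (v) and (vi) I would use the standard change-of-basepoint/stabiliser conjugation. Given $\tilde x_1\in Y_0$, transitivity from (ii) gives a class $[\gamma]\in G$ with $\tilde x_0\cdot[\gamma] = \tilde x_1$; the general orbit-stabiliser fact $\mathrm{Stab}(\tilde x_1) = [\gamma]^{-1}\mathrm{Stab}(\tilde x_0)[\gamma]$ combined with (iii) yields (v). Conversely, for (vi) I would start with a subgroup $K = [\gamma]^{-1} p_*(\pi_1(\tilde X,\tilde x_0))[\gamma]$ and set $\tilde x_1 := \tilde x_0\cdot[\gamma]\in Y_0$; then $K = \mathrm{Stab}(\tilde x_1) = p_*(\pi_1(\tilde X,\tilde x_1))$ again by (iii) applied at $\tilde x_1$.

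The only genuinely technical step is the construction and well-definedness of the path and homotopy lifts; once those are in place every item reduces to a clean orbit-stabiliser argument together with the fact that loops in $X$ whose lifts are loops in $\tilde X$ are exactly the image of $p_*$. I would expect the write-up to be quite short, essentially a citation to \cite[Ch.~10]{rotman} or \cite[Ch.~1]{hatcher}, since this is a classical package of results about covering spaces rather than anything specific to the Herzog--Sch\"onheim setting.
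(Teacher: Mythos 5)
Your proposal is correct and is essentially the standard argument: the paper itself gives no proof of this statement, citing \cite[p.279-281]{rotman}, and your path-lifting/homotopy-lifting plus orbit--stabiliser treatment is exactly the classical proof given in that reference (and in \cite{hatcher}). Nothing further is needed.
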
  
 A  covering space  $(\tilde{X},p)$ of $X$ is \emph{regular} if $p_*(\pi_1(\tilde{X},\tilde{x}_{0})) \lhd G$. A \emph{Deck transformation}  of $\tilde{X}$ is a homeomorphism $h:\tilde{X} \rightarrow \tilde{X}$ with $p \circ h=p$. The group of Deck transformations, $\operatorname{Cov}(\tilde{X}/X)$, is called 
      \emph{The Deck group} and it  acts freely on $\tilde{X}$.
  If $(\tilde{X},p)$ is universal, that is $\tilde{X}$ is simply-connected, then   $\operatorname{Cov}(\tilde{X}/X) \simeq G$.  If $X$ is  locally path connected, $H=p_*(\pi_1(\tilde{X},\tilde{x}_0))$, then $\operatorname{Cov}(\tilde{X}/X)\simeq\, ^{N(H)}\big/_{H}$,  where $N(H)$ is the normaliser of $H$ in  $G$. So, $(\tilde{X},p)$ is regular if and only if  $\operatorname{Cov}(\tilde{X}/X) \simeq \,^{G}\big/_{H}$  \cite[p.289-294]{rotman}.\\

Two covering spaces $(\tilde{X},p)$ and $(\tilde{Y},q)$ of a space $X$ are \emph{equivalent} if there exists a homeomorphism $\varphi:\tilde{Y}\rightarrow\tilde{X}$  such that $q=p\varphi$. If $X$ is  a  locally path connected space, then the covering spaces $(\tilde{X},p)$ and $(\tilde{Y},q)$ of  $X$ are equivalent if and only if $p_*(\pi_1(\tilde{X},\tilde{x}_0))$ and $q_*(\pi_1(\tilde{Y},\tilde{y}_0))$ are conjugate subgroups of  $\pi_1(X,x_0)$,  with  $\tilde{x}_0\in p^{-1}(x_0)$ and $\tilde{y}_0\in q^{-1}(x_0)$.  If $X$ satisfies several conditions, a converse to Theorem \ref{thm_cov_inject} is true, that is given $x_0 \in X$ and a subgroup $H$ of  $\pi_1(X,x_0)$, there exists a  covering space  $(\tilde{X},p)$ of $X$, with  $p_*(\pi_1(\tilde{X},\tilde{x_0}))= H$ for a suitably chosen basepoint $\tilde{x}_0 \in \tilde{X}$. A space $X$ is \emph{semilocally 1-connected} if each $x\in X$ has an open neighborhood $U$ so that $i_*:\pi_1(U,x)\rightarrow \pi_1(X,x)$ is the trivial map (where $i:U\xhookrightarrow{} X$ is the inclusion). A connected and locally path connected space $X$ has a universal covering if and only if $X$ is semilocally $1$-connected.
\begin{thm}\cite[p.295-300]{rotman}\label{thm_cov_corresp}
Let  $X$ be a  connected, locally path connected and semilocally $1$-connected space. Let  $x_0 \in X$.  Let $H$ be a subgroup of $\pi_1(X,x_0)$. Then there exists  a covering space $(\tilde{X},p)$ of $X$,    and $\tilde{x}_{0}\in p^{-1}(x_0)$ such that  $p_*(\pi_1(\tilde{X},\tilde{x}_{0}))= H$.
 \end{thm}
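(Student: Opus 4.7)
The plan is to construct $\tilde{X}$ directly from $X$ and $H$ as a set of path classes. Let $P(X,x_0)$ denote the set of continuous paths $\gamma:[0,1]\to X$ with $\gamma(0)=x_0$, and introduce the equivalence relation $\gamma\sim\gamma'$ iff $\gamma(1)=\gamma'(1)$ and the loop $\gamma\cdot\overline{\gamma'}$ represents an element of $H$ in $\pi_1(X,x_0)$. Define $\tilde{X}$ to be the set of equivalence classes, $p([\gamma])=\gamma(1)$, and choose $\tilde{x}_0$ to be the class of the constant path at $x_0$.

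Next I would topologize $\tilde{X}$ so that $p$ becomes a covering map. Semilocal $1$-connectedness together with local path-connectedness yields a basis $\mathcal{U}$ for $X$ of path-connected open sets $U$ such that the inclusion-induced map $\pi_1(U,u)\to\pi_1(X,u)$ is trivial for every $u\in U$. For each $U\in\mathcal{U}$ and each class $[\gamma]$ with $\gamma(1)\in U$, set $U_{[\gamma]}=\{[\gamma\cdot\eta]:\eta\text{ a path in }U\text{ starting at }\gamma(1)\}$. The routine part is to check that these sets form a basis (the trivial-$\pi_1$ condition ensures $U_{[\gamma]}$ depends only on $U$ and $[\gamma]$, and that distinct classes yield disjoint sets) and that $p$ restricted to each $U_{[\gamma]}$ is a homeomorphism onto $U$.

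With the topology in place, I would verify the remaining properties in order: $\tilde{X}$ is path-connected because any $[\gamma]$ is joined to $\tilde{x}_0$ by the lift $t\mapsto[\gamma|_{[0,t]}]$ of $\gamma$; $p$ is surjective because $X$ is path-connected from $x_0$; and $p^{-1}(U)=\bigsqcup U_{[\gamma]}$ follows from the basis property, giving the evenly-covered neighborhood structure. The crucial identity $p_*(\pi_1(\tilde{X},\tilde{x}_0))=H$ then reduces to a single observation about path lifting: a loop $\alpha$ at $x_0$ lifts to a loop at $\tilde{x}_0$ precisely when the endpoint of the canonical lift, namely $[\alpha]$, equals $[c_{x_0}]$, which by our equivalence means $[\alpha]\in H$.

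The main obstacle is the topological setup rather than the group-theoretic conclusion. Showing that the sets $U_{[\gamma]}$ genuinely form a basis and that $p$ becomes an open covering map requires careful use of the semilocal $1$-connectedness hypothesis exactly at the step where one must guarantee that different lifts starting above the same point remain separated; this is where the standard argument expends most of its effort. Once that topological foundation is in place, both path lifting and the computation $p_*(\pi_1(\tilde{X},\tilde{x}_0))=H$ follow from short arguments about concatenation of paths. An alternative route would be to first build the universal cover (the case $H=\{1\}$) and then form the quotient by the action of $H\leq G\cong\operatorname{Cov}(\tilde{X}_u/X)$, which bypasses some of the basis verification at the cost of first treating the universal case separately.
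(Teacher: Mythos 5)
Your construction is correct and is essentially the argument behind the result as the paper uses it: the paper gives no proof of its own but cites Rotman (pp.\ 295--300), where the covering corresponding to $H$ is obtained by exactly this path-class construction (paths from $x_0$ modulo the relation $\gamma\sim\gamma'$ iff $\gamma(1)=\gamma'(1)$ and $[\gamma\cdot\overline{\gamma'}]\in H$), topologized via the semilocally $1$-connected, locally path connected hypothesis, with $p_*(\pi_1(\tilde{X},\tilde{x}_0))=H$ read off from the lifting criterion. Your sketch identifies the genuinely delicate step (the basis verification making $p$ an even covering) in the right place, so nothing essential is missing.
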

\flushleft
The  free group of finite rank $n$, $F_n$,  is the fundamental group of  the bouquet with $n$ leaves, which is a space that satisfies all  the conditions of  Theorem \ref{thm_cov_corresp}. So,  we can take for granted the one-to-one correspondence between the subgroups of $F_n$ and the covering spaces (together with a chosen point) of the bouquet with $n$ leaves. 
\begin{ex} \label{ex_cov_subgrps}
Let $K= \langle b,a^2,ab^2a,aba^2ba,(ab)^3a \rangle$ be a subgroup of index $4$ in $F_2$. So, there exist a  covering space  $(\tilde{X}_K,p')$ over $X$ and a  chosen basepoint $\tilde{x}_{0_0}$ (or $\tilde{x}_{0_1}$) such that $p'_*(\pi_1(\tilde{X}_K,\tilde{x}_{0_0}))= K$. Note  that this covering is not regular, so $K$ is not normal in $F_2$.
  \begin{figure}[H] 
   \centering \scalebox{0.9}[0.8]{\begin{tikzpicture}
   \SetGraphUnit{4}
    \tikzset{VertexStyle/.append  style={fill}}
     \Vertex[L=$\color{blue}{\tilde{x}_{0_0}}$, x=-3,y=0]{A}
     \Vertex[L=$\tilde{x}_{1_0}$, x=0, y=0]{B}
   
   \Vertex[L=$\tilde{x}_{1_1}$, x=3, y=0]{C}
    \Vertex[L=$\color{blue}{\tilde{x}_{0_1}}$, x=6, y=0]{D}
   \Edge[label = a, labelstyle = above](A)(B)
    \Edge[label =a, labelstyle = below](B)(A)
   \Edge[label = b, labelstyle = above](B)(C)
   \Edge[label = b, labelstyle = below](C)(B)
    \Edge[label = b, labelstyle = above](B)(C)
     \Edge[label = a, labelstyle = above](C)(D)
   \Edge[label = a, labelstyle = below](D)(C)
   \Loop[dist = 2cm, dir = NO, label = b, labelstyle = left](A.west)
   \Loop[dist = 2cm, dir = SO, label = b, labelstyle = right](D.east)
 \end{tikzpicture}}\\
   $\downarrow^{p'}$\\
  \scalebox{0.9}[0.7]{ \begin{tikzpicture}
   \SetVertexMath
     \SetGraphUnit{4}
     \tikzset{VertexStyle/.append  style={fill}}
       \Vertex{x_0}
      \Loop[dist = 2cm, dir = NO, label = b , labelstyle = left ](x_0.west)
     \Loop[dist = 2cm, dir = SO, label = a , labelstyle = right  ](x_0.east)
    \end{tikzpicture}}
    \caption{The $4$-sheeted covering $(\tilde{X}_K,p')$ over $X$.}\label{fig-4-shheted-covering-K}
     \end{figure}
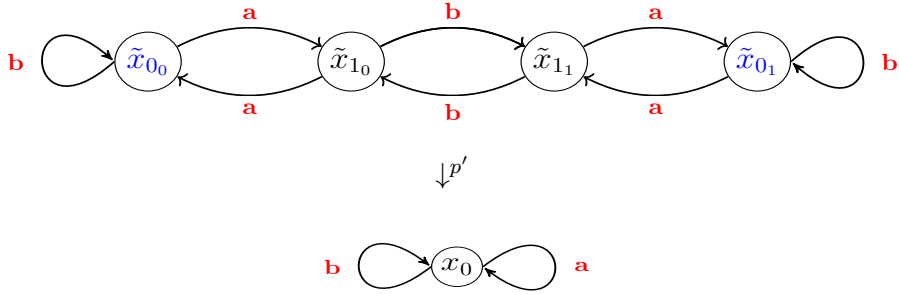\end{ex}
       \flushleft

\section{Graphs and Automata}\label{sec_graphs}
\subsection{Graphs}\label{subsec_graphs}
For more details, we refer the reader to \cite{serre}, \cite{stallings}, \cite{gross}.
A \emph{graph $\Gamma$} consits of two sets $V$ and $E$, and two maps $E \rightarrow V \times V $ $:$ $e \mapsto (i(e), t(e))$ and $E \rightarrow E$ $:$ $e \mapsto \bar{e}$, which satisfy the following conditions: $\forall e \in E$, $\bar{e}\neq e$, $\bar{\bar{e}}= e$ and $i(\bar{e})=t(e)$. An element $v \in V$ is called a \emph{vertex of $\Gamma$}, $e \in E$  is called an \emph{edge of $\Gamma$} and $\bar{e}$  is called the \emph{reverse  of $e$}. The vertex $i(e)$ is the \emph{initial vertex of $e$ (or origin)} and  $t(e)$ the \emph{terminal vertex of $e$ (or terminus)}. These two vertices are called the \emph{extremities of $e$} and two vertices are \emph{adjacent} if they are the extremities of some edge. An \emph{orientation} of $\Gamma$ consists of a choice of exactly one edge in each pair $\{e, \bar{e}\}$, and in this case $\Gamma$ is called a \emph{directed graph}. The  \emph{indegree$(v)$}  is the cardinal of $\{e \in E \mid t(e)=v\}$ and  the \emph{outdegree}$(v)$ is the cardinal of $\{e \in E \mid i(e)=v\}$.  A \emph{path $p$ in $\Gamma$} of length $n \geq 0$, with origin $v_1$ and terminus $v_n$ is an $n-$tuple of edges of $\Gamma$, $p=e_1e_2...e_n$, such that for $j=1,..,n-1$, $t(e_j)=i(e_{j+1})$ and $v_1=i(e_1)$, $v_n=t(e_n)$. A path $p$ is a \emph{loop at $v$} if $v$ is both its origin and its terminus. A loop of length $1$ at $v$ contributes $1$ to the indegree of $v$ and $1$ to its outdegree. 
\subsection{Automata}\label{subsec_automat}
We refer the reader to \cite[p.96]{sims}, \cite[p.7]{epstein}, \cite{pin,pin2}.
A \emph{finite state automaton} is a quintuple $(S,A,\mu,Y,s_0)$, where $S$ is a finite set, called the \emph{state set}, $A$ is a finite set, called the \emph{alphabet}, $\mu:S\times A \rightarrow S$ is a function, called the \emph{transition function}, $Y$ is a (possibly empty) subset of $S$ called the \emph{accept (or final) states}, and $s_0$ is called the \emph{start state}.  It is a directed  graph with vertices the states and each transition $s \xrightarrow{a} s'$ between states $s$ and $s'$ is an edge with label $a \in A$. The \emph{label of a path $p$} of length $n$  is the product $a_1a_2..a_n$ of the labels of the edges of $p$.
The  finite state automaton $M=(S,A,\mu,Y,s_0)$ is \emph{deterministic} if there is only one initial state and each state is the source of exactly one arrow with any given label from  $A$. In a deterministic automaton, a path is determined by its starting point and its label \cite[p.105]{sims}. It is \emph{co-deterministic} if there is only one final state and each state is the target of exactly one arrow with any given label from  $A$. The  automaton $M=(S,A,\mu,Y,s_0)$ is \emph{bi-deterministic} if it is both deterministic and co-deterministic. An automaton $M$ is \emph{complete} if for each state $s\in S$ and for each $a \in A$, there is exactly one edge from $s$ labelled $a$.
\begin{defn}
Let $M=(S,A,\mu,Y,s_0)$ be  a finite state automaton. Let $A^*$ be the free monoid generated by $A$. Let  $\operatorname{Map}(S,S)$ be  the monoid consisting of all maps from $S$ to $S$. The map $\phi: A \rightarrow \operatorname{Map}(S,S) $ given by $\mu$ can be extended in a unique way to a monoid homomorphism $\phi: A^* \rightarrow \operatorname{Map}(S,S)$. The range of this map is a monoid called \emph{the transition monoid of $M$}, which is generated by $\{\phi(a)\mid a\in A\}$. An element $w \in A^*$ is \emph{accepted} by $M$ if the corresponding element of $\operatorname{Map}(S,S)$, $\phi(w)$,  takes $s_0$ to an element of the accept states set $Y$. The set $ L\subseteq A^*$  recognized by $M$ is called \emph{the language accepted by $M$}, denoted by $L(M)$.
\end{defn}
In order to extend this definition to languages in  groups, one takes the alphabet $A$ of monoid generators to be closed under inverses and  each transition  $ s \xrightarrow{a} s'$ labelled by $a \in A$ induces a "reverse" transition $ s' \xrightarrow{x} s$, with $x=a^{-1}$.
\begin{ex}\label{ex_automaton_index3}
 Let  $A=\{a,b,x=a^{-1},y=b^{-1}\}$ be an alphabet. So, $A^*$ is $F_2$. The transition monoid  $T$ of the following complete automaton $M$  is generated by $\{\phi_a=\phi_x=(s_0,s_1),\phi_b=\phi_y=(s_1,s_2) \}$, that is  $S_3$.
\begin{figure}[H]
\centering  \scalebox{0.9}[0.8]{ \begin{tikzpicture}
   \SetGraphUnit{4}
    \tikzset{VertexStyle/.append  style={fill}}
     \Vertex[L=$s_1$, x=0, y=0]{B}
   \Vertex[L=$s_0$, x=-3,y=0]{A}
   \Vertex[L=$s_2$, x=3, y=0]{C}
   \Edge[label = a/x, labelstyle = above](A)(B)
   \Edge[label = b/y ,labelstyle = above](B)(C)
   \Edge[label = b/y, labelstyle = below](C)(B)
   \Edge[label = a/x ,labelstyle = below](B)(A)
   \Loop[dist = 2cm, dir = NO, label = b/y ,labelstyle = left](A.west)
   \Loop[dist = 2cm, dir = SO, label = a/x, labelstyle = right](C.east)
 \end{tikzpicture}}
 \caption{A complete automaton $M$ on $A=\{a,b,x=a^{-1},y=b^{-1}\}$.}
\label{fig_aut2}
\end{figure}
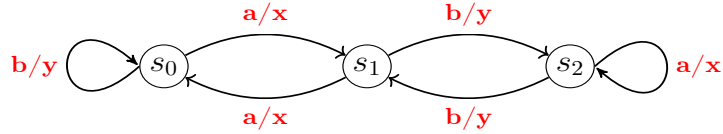
Once a unique start state and a unique accept state are specified, the automaton $M$ from Figure \ref{fig_aut2} is  bi-deterministic. We describe the consequences of several different choices on  $L(M)$, the  language accepted by $M$. If $s_0$ is both the start and accept state of $M$, then $L(M)$ is   the subgroup $H$ of $A^*$ generated by $\{b,a^2,ab^2a,ababa\}$. If $s_1$ is both the start and accept state of $M$, then $L(M)$ is   the subgroup $a^{-1}Ha$ of $A^*$ and if $s_2$ is both the start and accept state of $M$, then $L(M)$ is   the subgroup $(ab)^{-1}H(ab)$ of $A^*$. If $s_0$ is the start state and $s_1$ the  accept state, then $L(M)$ is  the coset $Ha$ of $H$. If $s_0$ is the start state and $s_2$ the accept state, then $L(M)$ is  the coset $Hab$ of $H$. When $s_0$ is both the start and accept state of $M$, $M$ is called \emph{the Schreier automaton for $F_2$ relative to the subgroup $H$} \cite[p.102]{sims}. 
Generally,  the Schreier automaton for $F_2$ relative to a subgroup $H$ is a finite,  complete and bi-deterministic automaton, with  transition monoid $T$  a group of permutations in $S_d$,  with  $T \simeq\,^{G}\big/_{N_H}$, where $N_H= \bigcap\limits^{}_{g \in G}g^{-1}Hg$ is the normal core of $H$. \end{ex}
\section{Covering spaces  in the context of free groups}
 \subsection{How to look at the covering space of  the $n$-leaves bouquet ?} \label{subsec-covering-free}
We refer to \cite[Chapter 2]{stilwell}, and \cite{ivanov} for details. Let $X$ denote  the  $n$-leaves  bouquet  with basepoint $x_0$. Its fundamental group $\pi_1(X,x_0)$ is $F_n$. Given $H \leq F_n$ of index $d$,  there exists  a covering $(\tilde{X}_H,p)$ of $X$ with basepoint $\tilde{x}_0$ (see Theorem \ref{thm_cov_corresp}).  The covering $(\tilde{X}_H,p)$ can be viewed as a directed and labelled graph.  Using this covering space, J. Stallings  gave  a topological proof of some classical results about finitely generated subgroups of $F_n$. In \cite{stallings}, he introduced the topological  notion of a folding, and  used it  in the study of finitely generated subgroups of a free group.\\

 \setlength\parindent{10pt}The folding corresponding to a finitely generated subgroup $H$ of $F_n$ is a directed labelled graph, that can be  considered as  a finite deterministic automaton; fixing the start and the end state at a distinguished vertex, it recognises the set of elements in $H$. Stalling's approach has been applied to solve many combinatorial and algorithmic problems in group theory. In the years 1980-90, similar graphical approaches were developed in the study of finitely generated  subgroups of the free group, in order to solve many special cases of the Hanna Neumann conjecture \cite{gersten,nickolas,tardos}, which  states: $rank(H \cap K)-1 \leq (rank(H)-1)(rank(K)-1)$, where $H$ and $K$  are finitely generated subgroups of the free group (of ranks $rank(H)$ and $rank(K)$ respectively). It was recently solved  \cite{mineyev1}, \cite{friedman}, \cite{dicks}.

 \setlength\parindent{10pt}In this work, our approach is the following:  given $H \leq F_n$ of index $d$,  we consider the  covering $(\tilde{X}_H,p)$ of $X$ with basepoint $\tilde{x}_0$, from several points of view in parallel: as a covering, as a  Schreier coset diagram  for $F_n$ relative to the subgroup  $H$, as a  complete and "almost" bi-deterministic automaton, in the sense that we do not fix the  start and accept states in advance, but we fix them differently at our needs. We will use all these points of view together. More formally:  
 \begin{defn}\label{def_Schreier-graph}
 Let $H < F_n$ be of finite index $d$. Let $(\tilde{X}_H,p)$ be the covering  of the $n$-leaves bouquet with basepoint $\tilde{x}_0$ and  vertices  $\tilde{x}_0, \tilde{x}_1,...,\tilde{x}_{d-1}$. Let  $t_i$ denote the label of a minimal path from $\tilde{x}_0$ to $\tilde{x}_i$. Let $\mathscr{T}=\{1, t_i\mid 1 \leq i\leq d-1\}$.  As $\tilde{X}_H$ is the Schreier coset diagram  for $F_n$ relative to the subgroup  $H$,  $\tilde{x}_0$ represents the subgroup $H$ and the other vertices $\tilde{x}_1,...,\tilde{x}_{d-1}$ represent the cosets  $Ht_i$ accordingly.  We call $\tilde{X}_{H}$  \emph{the  Schreier graph  of $H$},  with this correspondence between the vertices  $\tilde{x}_0, \tilde{x}_1,...,\tilde{x}_{d-1}$ and the cosets  $Ht_i$ accordingly.  
   \end{defn} 
  Note that  a minimal path from $\tilde{x}_0$ to $\tilde{x}_i$ is not necessarily unique, so we choose arbitrarily one. The transversal  $\mathscr{T}$  may be a Schreier transversal or any transversal with representatives of minimal length.
\begin{ex}\label{ex_schreier-coset}
Consider the automaton in Figure \ref{fig_aut2}.  Replacing $s_0$ by $G$ and the other vertices accordingly, this graph is the  Schreier graph of  $G=\langle b,a^2,ab^2a,ababa\rangle$,   a subgroup of $F_2$ of index 3, with Schreier transversal  $\mathscr{T}=\{1,a,ab\}$: 
\begin{figure}[H]
\centering \scalebox{0.9}[0.8]{\begin{tikzpicture}
  \SetGraphUnit{4}
   \tikzset{VertexStyle/.append  style={fill}}
    \Vertex[L=$Ga$, x=0, y=0]{B}
  \Vertex[L=$G$, x=-3,y=0]{A}
  \Vertex[L=$Gab$, x=3, y=0]{C}
  \Edge[label = a, labelstyle = above](A)(B)
  \Edge[label = b, labelstyle = above](B)(C)
  \Edge[label = b, labelstyle = below](C)(B)
  \Edge[label = a, labelstyle = below](B)(A)
  \Loop[dist = 2cm, dir = NO, label = b, labelstyle = left](A.west)
  \Loop[dist = 2cm, dir = SO, label = a, labelstyle = right](C.east)
\end{tikzpicture}}
\caption{The Schreier graph of  $G=\langle b,a^2,ab^2a,(ab)^3\rangle$.}\label{Schreier-G}
\end{figure}
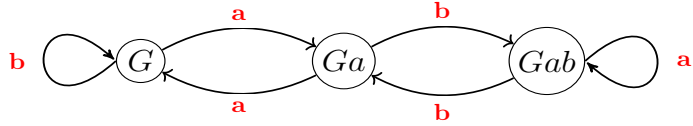\end{ex}
\flushleft \begin{rem}
If  $H \leq F_n$ of index $d$ with Schreier  graph $\tilde{X}_H$,   then  from Theorem \ref{thm_cov_corresp} (covering theory),  the Deck group $\operatorname{Cov}(\tilde{X}/X)\simeq\,^{N(H)}\big/_{H}$, where $N(H)$ is the normaliser of $H$ in  $F_n$. From Section \ref{subsec_automat} (automaton theory), the transition group $T \simeq \,^{F_n}\big/_{N_H}$, where $N_H$ is the normal core of $H$. So, whenever $H$ is a normal subgroup of $F_n$, $\operatorname{Cov}(\tilde{X}/X)\simeq T$ and it can be read easily from the graph. If $H$ is not normal in $F_n$, this is not necessarily true. Indeed, in Example \ref{ex_schreier-coset}, $G$  is of index 3,  not normal, so  $\operatorname{Cov}(\tilde{X}/X)=\{1\}$, while  its transition group is $S_3$ (see  Example \ref{ex_automaton_index3}).
\end{rem}
 \subsection{A combinatorial approach to coverings of the leaves bouquet}\label{subsec_schreier-graph}
We keep the same terminology as  in Section \ref{subsec-covering-free}. We develop a combinatorial approach to $\tilde{X}_H$ and prove some of its properties.
 If $H \leq F_n$ of index $d$, then for every $w \in F_n$, there exists a minimal natural number $1 \leq m_{w,i}\leq d$ such that $w^{m_{w,i}} \in t_i^{-1}Ht_i$, where $t_i \in \mathscr{T}$.
  \begin{lem}\label{lem_exists_order2}
 Let $H \leq F_n$  of index $d$.  Let $\tilde{X}_H$ be the Schreier graph of $H$ with vertices  $\tilde{x}_0, \tilde{x}_1,...,\tilde{x}_{d-1}$.  Then, for every $w \in F_n$,  there exists a minimal natural number $1 \leq m_{w,i}\leq d$ such that $w^{m_{w,i}}$ is a loop at $\tilde{x}_i$.
 \end{lem}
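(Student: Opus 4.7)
The plan is to argue via the permutation action of $w$ on the vertex set of $\tilde{X}_H$, using the bi-deterministic and complete nature of the Schreier graph established in the previous subsection.

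First I would fix $w \in F_n$ and the vertex $\tilde{x}_i$, and observe that since $\tilde{X}_H$ is a complete deterministic automaton, there is a unique path in $\tilde{X}_H$ starting at $\tilde{x}_i$ whose label is $w$; denote its terminal vertex by $\sigma_w(\tilde{x}_i)$. This defines a map $\sigma_w$ on the set of $d$ vertices. Using completeness applied to $w^{-1}$ (or equivalently the co-determinism of $\tilde{X}_H$, i.e.\ the fact that the transition monoid of the Schreier automaton is a group of permutations $T \leq S_d$, as recalled at the end of Example~\ref{ex_automaton_index3}), $\sigma_w$ is a bijection, hence an element of $S_d$. More concretely, $\sigma_w$ is exactly the image of $w$ under the monoid homomorphism $\phi \colon F_n \to T$.

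Next I would consider the cyclic subgroup $\langle \sigma_w \rangle \leq S_d$ acting on the $d$-element vertex set, and look at the orbit $\mathcal{O}_i = \{\sigma_w^k(\tilde{x}_i) : k \geq 0\}$ of $\tilde{x}_i$. Set $m_{w,i} = |\mathcal{O}_i|$; by the orbit-stabilizer theorem $m_{w,i}$ divides the order of $\sigma_w$, and clearly $1 \leq m_{w,i} \leq d$. By definition of the orbit size, $m_{w,i}$ is the smallest positive integer with $\sigma_w^{m_{w,i}}(\tilde{x}_i) = \tilde{x}_i$.

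Finally I would translate this back to paths in $\tilde{X}_H$: by determinism, reading the word $w^{m_{w,i}}$ starting at $\tilde{x}_i$ traces the unique path whose terminal vertex is $\sigma_w^{m_{w,i}}(\tilde{x}_i) = \tilde{x}_i$, so $w^{m_{w,i}}$ is indeed a loop at $\tilde{x}_i$, while for any $1 \leq k < m_{w,i}$ the word $w^k$ ends at a different vertex and therefore is not a loop at $\tilde{x}_i$. This yields the minimal $m_{w,i}$ claimed. There is no real obstacle here; the only point one has to be careful about is ensuring that $\sigma_w$ is a genuine permutation (not just a function), which is why the combination of completeness and bi-determinism of the Schreier graph is invoked.
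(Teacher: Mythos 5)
Your argument is correct and is essentially the paper's: the paper records just before the lemma that there is a minimal $1 \leq m_{w,i} \leq d$ with $w^{m_{w,i}} \in t_i^{-1}Ht_i$ (which is exactly your statement that the orbit of $\tilde{x}_i$ under the permutation $\sigma_w$ induced by $w$ on the $d$ vertices, i.e.\ the $d$ cosets, has size at most $d$), and its one-line proof simply identifies $w^{m}\in t_i^{-1}Ht_i$ with $w^{m}$ being a loop at $\tilde{x}_i$. You additionally spell out, via the transition group and orbit--stabilizer, the bound the paper takes for granted, so your write-up is a more self-contained version of the same argument.
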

  \begin{proof}
Since  $1 \leq m_{w,i}\leq d$ is the  minimal natural number such that $w^{m_{w,i}} \in t_i^{-1}Ht_i$,  $ m_{w,i}$ is also the minimal natural number such that $w^{m_{w,i}}$ is a loop at $\tilde{x}_i$.
  \end{proof}
  In \cite{thesis}, the authors define the order of an element in regular coverings. We extend the definition of  the order of an element to any  covering.
\begin{defn}\label{defn_order}
 Let $H < F_n$ of  index $d$. Let  $w \in F_n$.  Let $\tilde{X}_H$,  $\mathscr{T}$ as  in Definition \ref{def_Schreier-graph}. 
\begin{enumerate}[(i)]
\item \emph{The order of $w$ at $\tilde{x}_i$ in $\tilde{X}_H$} is the   minimal natural number $1 \leq o(w,i) \leq d$ such that $w^{o(w,i)}$ is  a loop at $\tilde{x}_i$,  or equivalently such that $w^{o(w,i)} \in t_i^{-1}Ht_i$.
\item \emph{The $k$-th $w$-step}, $1 \leq k \leq o(w,i)$, is the $k$-th path labelled $w$ in the loop $w^{o(w,i)}$ at $\tilde{x}_i$.
\item The set $V_{w,i}$ is the set of vertices in  $\tilde{X}_H$ visited at each $w$-step in the loop $w^{o(w,i)}$ at $\tilde{x}_i$.
\item The loops of $w$ at $\tilde{x}_i$ and $\tilde{x}_j$ are \emph{disjoint} if $V_{w,i} \cap V_{w,j}=\emptyset$
\end{enumerate}
\end{defn}
\begin{ex}\label{ex_compute_o_and_V}
Consider Examples \ref{ex_schreier-coset}.  For $w=aba^{-1}$, $o(aba^{-1},0)=2$, $V_{aba^{-1},0}=V_{aba^{-1},2}=\{\tilde{x}_0,\tilde{x}_2\}$ and 
$o(aba^{-1},1)=1$, $V_{aba^{-1},1}=\{\tilde{x}_1\}$,   that is the loops at $\tilde{x}_0$ and $\tilde{x}_1$ are disjoint. If we reduce cyclically $aba^{-1}$, we get   $b$, with   $o(b,0)=1$, $V_{b,0}=\{\tilde{x}_0\}$; $o(b,1)=o(b,2)=2$, $V_{b,1}=V_{b,2}=\{\tilde{x}_1,\tilde{x}_2\}$. If $w=ab$, $o(ab,0)=o(ab,1)=o(ab,2)=3$,  $V_{a,0}=V_{a,1}=V_{a,2}=\{\tilde{x}_0,\tilde{x}_1,\tilde{x}_2\}$.
\end{ex}
 \begin{lem}\label{lem_o(w,i)}
 Let $H < F_n$  of index $d$. Let $N_H$ denote the normal core of $H$. Let $\tilde{X}_H$ be the Schreier graph of $H$ with vertices  $\tilde{x}_0, \tilde{x}_1,...,\tilde{x}_{d-1}$.  Let $w \in F_n$ with $o(w,i)$ and $V_{w,i}$, $0 \leq i\leq d-1$, as in Definition \ref{defn_order}. Then
\begin{enumerate}[(i)]
\item  $\mid  V_{w,i} \mid=o(w,i)$. 
\item Either  $V_{w,i} \cap V_{w,j}=\emptyset$, or $V_{w,i} = V_{w,j}$, $0 \leq i,j\leq d-1$.
\item $\bigcup\limits^{i=d-1}_{i=0}V_{w,i}=\{\tilde{x}_0,..,\tilde{x}_{d-1}\}$.
\item $\sum\limits^{}_{}o(w,i)=d$, where the sum is on disjoint loops.
\item If for some integer $k$,  $w^k$ is a loop at $\tilde{x}_i$,   then  $o(w,i)$ divides $k$.
\item  If $o(w,i)=d$ for some $0 \leq i \leq d-1$, then $w^d \in N_H$. 
\item If  $w^k \in N_H$ for  $k \in \mathbb{Z}$, then  $\ell cm\{o(w,i)\mid 0 \leq i \leq d-1\}$ divides $k$.
\item   $o(w,i)$ divides  $[F_n:  N_H]$. 
\end{enumerate}
In particular, $w \in N_H$ if and only if $o(w,i)=1$, for all $0 \leq i\leq d-1$.
\end{lem}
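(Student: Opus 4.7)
The plan is to interpret the action of $w$ on the vertex set of $\tilde{X}_H$ as a permutation $\phi(w)$ via the transition homomorphism $\phi$ described in Section \ref{subsec_automat}. Under this identification, $V_{w,i}$ is exactly the orbit of $\tilde{x}_i$ under the cyclic subgroup $\langle \phi(w)\rangle$, and $o(w,i)$ is the length of that orbit. With this dictionary in mind, items (i)--(v) become standard facts about cyclic group actions on a finite set.

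For (i), if any two of the vertices $\tilde{x}_i,\phi(w)\tilde{x}_i,\dots,\phi(w)^{o(w,i)-1}\tilde{x}_i$ coincide, say at indices $k<m$, then $\phi(w)^{m-k}$ fixes $\tilde{x}_i$ with $m-k<o(w,i)$, contradicting minimality; hence these $o(w,i)$ vertices are distinct and constitute $V_{w,i}$. Item (ii) is the orbit dichotomy, (iii) is the fact that orbits cover the set, and (iv) follows by summing orbit cardinalities using (i). For (v), if $w^k$ is a loop at $\tilde{x}_i$ then $\phi(w)^k$ stabilizes $\tilde{x}_i$, so $k$ lies in the preimage in $\mathbb{Z}$ of that stabilizer, which is precisely $o(w,i)\mathbb{Z}$, giving $o(w,i)\mid k$.

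The substantive point is (vi), together with the auxiliary identity $N_H = \bigcap_{j=0}^{d-1} t_j^{-1}Ht_j$ that will feed (vii) and (viii). If $o(w,i)=d$, then by (i) we have $V_{w,i}=\{\tilde{x}_0,\dots,\tilde{x}_{d-1}\}$, so $w^d$ is a loop at every $\tilde{x}_j$, i.e.\ $w^d \in t_j^{-1}Ht_j$ for all $j$. To close the gap to $N_H$, I would use that any $g\in F_n$ decomposes as $g=h t_j$ with $h\in H$ and $t_j\in\mathscr{T}$, so $g^{-1}Hg=t_j^{-1}Ht_j$; intersecting over $g$ collapses to intersecting over $j$, which proves the auxiliary identity and yields $w^d\in N_H$.

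With (vi) in place, (vii) follows because $w^k\in N_H$ implies $w^k\in t_i^{-1}Ht_i$ for every $i$, so $w^k$ is a loop at each $\tilde{x}_i$, and (v) gives $o(w,i)\mid k$ for each $i$, whence $\ell cm\{o(w,i)\}\mid k$. For (viii) I would pass to the finite quotient $F_n/N_H$ (finite since $H$ has finite index, because the normal core of a finite-index subgroup has finite index): the order $m$ of $wN_H$ divides $[F_n:N_H]$, and $w^m\in N_H$ combined with (v) gives $o(w,i)\mid m\mid [F_n:N_H]$. The concluding equivalence is immediate: $w\in N_H$ iff $w\in t_i^{-1}Ht_i$ for every $i$, iff $w$ itself is a loop at each $\tilde{x}_i$, iff $o(w,i)=1$ for all $i$. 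The only real obstacle is keeping the transversal--conjugate identification tidy; everything else is bookkeeping for a cyclic action on a finite set.
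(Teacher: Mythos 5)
Your proof is correct and follows essentially the same route as the paper: minimality of $o(w,i)$ gives the distinctness of the visited vertices, the orbit dichotomy gives (ii)--(iv), the division algorithm gives (v), and the identification $g^{-1}Hg=t_j^{-1}Ht_j$ for $g=ht_j$, $h\in H$, gives (vi)--(viii). Your phrasing via the transition permutation $\phi(w)$ and your use of the order of $wN_H$ in the finite quotient $F_n/N_H$ for (viii) simply make explicit steps the paper treats tersely (the paper's own argument for (viii) implicitly takes $k$ to be that order), so nothing is missing.
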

\begin{proof}
$(i)$ Clearly,  $\mid  V_{w,i} \mid \leq o(w,i)$.  If   $\mid  V_{w,i} \mid < o(w,i)$, then the loop crosses a vertex at least twice and there exists a loop $w^{j}$ at $\tilde{x}_i$ with $j<o(w,i)$, so this contradicts the minimality of  $o(w,i)$.\\ $(ii)$ Assume  by contradiction there exists $\tilde{x}_k \in V_{w,i} \cap V_{w,j}$,  with $V_{w,i} \neq V_{w,j}$.  So, from $(i)$, $o(w,k)=o(w,i)=o(w,j)$, and there are two different loops of the same length $o(w,i)$ at $\tilde{x}_k$. So, beginning at $\tilde{x}_k$, there exists  a first $j < o(w,i)$, such that at the $j$-th $w$-step in one loop the vertex $\tilde{x}_{k'}$ is attained and  in the other loop a different vertex $\tilde{x}_{k''}$ is attained. That is, the element $w^j$ acts in two different ways on the coset of $H$ represented by $\tilde{x}_k$ and this is a contradiction.\\
$(iii),(iv)$ The sets $V_{w,i}$ partition the finite set $\{\tilde{x}_0,..,\tilde{x}_{d-1}\}$.\\
$(v)$ From the minimality of $o(w,i)$, $o(w,i)\leq k$. We divide $k$ by $o(w,i)$: $k= q(o(w,i))+r$, with residue  $0\leq r< o(w,i)$.  So, $w^r=w^{k-q(o(w,i))}$ is a  loop at $\tilde{x}_i$. From the minimality of $o(w,i)$, $r=0$, that is $o(w,i)$ divides $k$. \\ 
$(vi)$ If  $o(w,i)=d$, then $w^{d}$ is a  loop at all the $d$ vertices. So,  $w^d$ belongs to all the conjugates of $H$, that is $w^d\in N_H$. \\
$(vii)$, $(viii)$ If  $w^k \in N_H$, then  from $(v)$, $o(w,i)$ divides $k$ for every $0 \leq i \leq d-1$, that is  $\ell cm\{o(w,i)\mid 0 \leq i \leq d-1\}$ divides $k$. As $N_H\lhd F_n$, $k$ divides  $[F_n:  N_H]$ and so $o(w,i)$ divides  $[F_n:  N_H]$, for all $0 \leq i \leq d-1$. 
\end{proof}
Note that from Lemma \ref{lem_o(w,i)}$(v)$, it results that for every integer $q$ dividing  $o(w,i)$, there exists an element $g \in F_n$ with  $o(g,i)=q$ (take $g=w^{\frac{o(w,i)}{q}}$).
We now consider the  case of a normal subgroup, with a regular covering.
\begin{lem}\label{lem_o(w,i)_normal}
Let $N \lhd F_n$ of  index $m$. Let  $\tilde{X}_N$ be   the Schreier graph of $N$ with vertices  $\tilde{x}_0, \tilde{x}_1,...,\tilde{x}_{m-1}$.  Let $\mathscr{T}=\{1, t_i\mid 1 \leq i\leq m-1\}$ be a transversal of $N$ in $F_n$, with  $t_i$  the label of a minimal path from $\tilde{x}_0$ to $\tilde{x}_i$. Let $w \in F_n$ with $o(w,i)$ and $V_{w,i}$, $0 \leq i\leq m-1$, as in Definition \ref{defn_order}. Then
\begin{enumerate}[(i)]
\item  $o(w,i) =o(w,j)=o(w)$, for all $0 \leq i,j \leq m-1$. 
\item  $o(w)$ divides $m$.
\item There are exactly $\frac{m}{o(w)}$ disjoint loops labelled $w^{o{(w)}}$ in  $\tilde{X}_N$.
\end{enumerate}
\end{lem}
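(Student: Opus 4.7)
The plan is to reduce everything to the fact that when $N$ is normal in $F_n$, every conjugate $t_i^{-1}Nt_i$ is just $N$ itself, so the defining condition for $o(w,i)$ collapses to a condition that is visibly independent of $i$.

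First I would establish $(i)$. By Definition \ref{defn_order}, $o(w,i)$ is the minimal $k\ge 1$ with $w^k\in t_i^{-1}Nt_i$. Since $N\lhd F_n$, we have $t_i^{-1}Nt_i=N$ for every $i$, so $o(w,i)$ is the minimal $k\ge 1$ with $w^k\in N$; this depends only on the coset $wN\in F_n/N$, not on $i$. Call this common value $o(w)$. Equivalently, $o(w)$ is the order of the image of $w$ in the finite quotient group $F_n/N$.

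Next, for $(ii)$, since $F_n/N$ has order $m=[F_n:N]$, Lagrange's theorem applied to the cyclic subgroup $\langle wN\rangle\le F_n/N$ gives that $o(w)$ divides $m$. (Alternatively, by Lemma \ref{lem_o(w,i)}\,$(viii)$, $o(w,i)$ divides $[F_n:N_N]=[F_n:N]=m$, since $N$ is its own normal core.)

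Finally, for $(iii)$, I would invoke Lemma \ref{lem_o(w,i)}\,$(iv)$, which says $\sum o(w,i)=m$ where the sum ranges over a set of representatives, one per disjoint loop (equivalently, one per distinct set $V_{w,i}$). By part $(i)$ just proved, every term in this sum equals $o(w)$. Hence if $\nu$ denotes the number of disjoint loops labelled $w^{o(w)}$ in $\tilde{X}_N$, then $\nu\cdot o(w)=m$, giving $\nu=m/o(w)$. There is no real obstacle here; the only point requiring any care is being precise about the bijection between disjoint loops and the equivalence classes of vertices under ``$V_{w,i}=V_{w,j}$'' supplied by Lemma \ref{lem_o(w,i)}\,$(ii)$--$(iii)$, which partitions $\{\tilde{x}_0,\ldots,\tilde{x}_{m-1}\}$ into blocks of equal size $o(w)$.
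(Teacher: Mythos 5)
Your proposal is correct and follows essentially the same route as the paper: normality collapses every conjugate $t_i^{-1}Nt_i$ to $N$, giving $(i)$, and then $(ii)$ and $(iii)$ follow from the counting facts in Lemma \ref{lem_o(w,i)} (the paper cites parts $(iv)$ and $(iii)$, while you use Lagrange in $F_n/N$ and part $(iv)$ — an immaterial reshuffling of the same ingredients).
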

\begin{proof}
$(i)$ We recall that $o(w,i)$ is the   minimal natural number  such that $w^{o(w,i)} \in t_i^{-1}Nt_i$. Since  $N \lhd F_n$, $g^{-1}Ng=N$ for every $g \in F_n$.  So, for all $0 \leq i,j \leq d-1$,  $o(w,i) =o(w,j)=o(w)$.\\
$(ii)$ results from Lemma \ref{lem_o(w,i)}$(iv)$ and $(iii)$  from Lemma \ref{lem_o(w,i)}$(iii)$. 
\end{proof}
We introduce a graph derived from the Schreier graph that describes the $w$-steps, where $w$ is a word in $F_n$.
\begin{defn}\label{def_X_H(w)}
 Let  $w \in F_n$.  Let $H<F_n$ of index $d$. Let  $\tilde{X}_H$ be the 
 Schreier graph  of  $H$.   We define $\tilde{X}_{H}(w)$ to be the  Schreier graph  of $H$, but instead of being labelled by the generators of $F_n$, only the $w$-steps are specified. We also call   $\tilde{X}_{H}(w)$  the  Schreier graph  of $H$.
  \end{defn} 
  \begin{ex}
  Consider the subgroup $G$ of index $3$ in $F_2$ from Example \ref{ex_schreier-coset}. Let $w=ab$. Then, $\tilde{X}_G(ab)$ is the following connected graph: 
   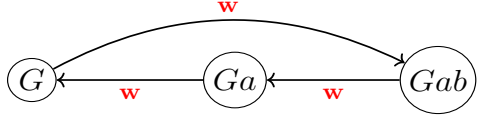
\begin{figure}[H]
    \centering \scalebox{0.9}[0.8]{\begin{tikzpicture} 
          \SetGraphUnit{4}
           \tikzset{VertexStyle/.append  style={fill}}
            \Vertex[L=$Ga$, x=0, y=0]{B}
          \Vertex[L=$G$, x=-3,y=0]{A}
          \Vertex[L=$Gab$, x=3, y=0]{C}
          \Edge[label = w, labelstyle = above](A)(C)
           \tikzset{EdgeStyle/.style = {->}}
          \Edge[label = w, labelstyle = below](B)(A)
          \Edge[label = w, labelstyle = below](C)(B)
               \end{tikzpicture}}        
        \caption{The graph $\tilde{X}_G(ab)$ for  $G=\langle b,a^2,ab^2a,(ab)^2a\rangle$.} \label{fig-X_G(ab)}
         \end{figure}
          Let $w=a$. The graph $\tilde{X}_G(a)$ is the following disconnected graph: 
  \begin{figure}[H]
  
  \centering \scalebox{0.9}[0.8]{ \begin{tikzpicture} 
    \SetGraphUnit{4}
     \tikzset{VertexStyle/.append  style={fill}}
      \Vertex[L=$Ga$, x=0, y=0]{B}
    \Vertex[L=$G$, x=-3,y=0]{A}
    \Vertex[L=$Gab$, x=3, y=0]{C}
    \Edge[label = a, labelstyle = above](A)(B)
    \Edge[label = a, labelstyle = below](B)(A)
     \Loop[dist = 2cm, dir = SO, label = a, labelstyle = right](C.east)
  \end{tikzpicture}}  
  \caption{The graph $\tilde{X}_G(a)$ for $G=\langle b,a^2,ab^2a,(ab)^2a\rangle$.} \label{fig-X_G(a)}
   \end{figure}
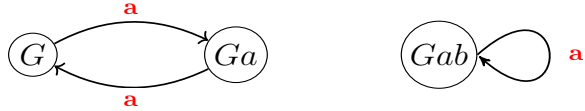 \end{ex}
From Lemma \ref{lem_o(w,i)_normal}, the situation described in Figure \ref{fig-X_G(a)} can never occur if the subgroup  is normal.
\subsection{What happens when we look at a tower of  covering spaces?}\label{sec_combined}
Let $H < F_n$  of index $d$.  Let $\tilde{X}_H$ be the Schreier graph of $H$.   Let $N \lhd F_n$  of  index $m$ such that $N \subset H$, with Schreier graph  $\tilde{X}_N$. The subgroup $H$ is a free group of rank  $d(n-1)+1$, so it is the fundamental group of  $X_H$, the bouquet of $d(n-1)+1$ circles.
 As  $N < H$, there exists a regular $\frac{m}{d}$-sheeted covering $\hat{X}_N$ over  $X_H$, such that the  $\frac{m}{d}$ vertices in  $\hat{X}_N$ represent the  $\frac{m}{d}$ right cosets of $N$ whose disjoint union is $H$ (see Theorem \ref{thm_cov_inject}). We now introduce  a graph  that combines in some sense the Schreier graphs  of $N$ and $H$, and gives a more complete picture. 
 \begin{defn}\label{def_combined}
 Let  $w \in F_n$.  Let $H<F_n$ of index $d$ and let $N\lhd F_n$ of index $m$ such that $N\subset H$.  Let  $\tilde{X}_H(w)$, $\tilde{X}_N(w)$  as in Definition \ref{def_X_H(w)}. We define \emph{the combined graph of $N$ and $H$}, denoted by $\tilde{X}_{N,H}(w)$, to be  the following two-levelled graph:\\
 At  the top layer, denoted  \emph{Top}: $\tilde{X}_{N}(w)$ \\
  At the bottom layer, denoted  \emph{Bot}: $\tilde{X}_{H}(w)$\\
  The relative position of the vertices in \emph{Top} and \emph{Bot} is:\\ For each vertex $\tilde{x}_i$ representing $Ht_i$ in \emph{Bot}, there is above it  a set of $\frac{m}{d}$ vertices $Ny_1$,..,$Ny_{\frac{m}{d}}$ such that $Ht_i$ is the disjoint union of $Ny_1$,..,$Ny_{\frac{m}{d}}$. 
  We call  the set $\{Ny_1,..,Ny_{\frac{m}{d}}\}$  \emph{the fiber over  $Ht_i$}, denoted by $p^{-1}_{w}(Ht_i)$.
  \end{defn}  
 As $N\subset H$, the action of $F_n$ on the right cosets of $H$ induces an action of $F_n$ on the right cosets of $N$. That is, 
  for each loop in \emph{Bot}, there is at least one loop induced in \emph{Top}. Indeed, consider the loop labelled $w^{o(w,i)}$ at $\tilde{x}_i$ in  \emph{Bot} with $V_{w,i}$, $0 \leq i\leq d-1$, as in Definition \ref{defn_order}.  Then, if  we denote by $p^{-1}_w(V_{w,i})$ the set of  fibers  in \emph{Top} over  the set of visited vertices, then there are some loops connecting the fibers in  $p^{-1}_w(V_{w,i})$. We will study these loops in more detail in the following lemmas. 
    \begin{ex}\label{ex_combined_graph}
 Let $K<F_2$ of index $4$ from  Example \ref{ex_cov_subgrps}. Let $N=N_K$ of index $8$ in $F_2$, with $\tilde{X}_N$ below,  and $\tilde{X}_{N,K}(ab)$ in Figure \ref{fig-combined N-K}: 
\begin{figure}[H]
 \centering \scalebox{0.8}[0.7]{\begin{tikzpicture}\label{fig_covering_normalcoreK}
    \SetGraphUnit{1}
     \tikzset{VertexStyle/.append  style={fill}}
      \Vertex[L=$N$, x=-8,y=0]{A}
      \Vertex[L=$Na$, x=-6, y=0]{B}
    
    \Vertex[L=\small$Nab$, x=-4, y=0]{C}
     \Vertex[L=\small$Naba$, x=-2, y=0]{D}
      \Vertex[L=\tiny $N(ab)^2$, x=0, y=0]{E}
       \Vertex[L=\small$Nbab$, x=2, y=0]{F}
  \Vertex[L=\small$Nba$, x=4, y=0]{G}
   \Vertex[L=\small$Nb$, x=6, y=0]{H}
    \Edge[label = a, labelstyle = above right](A)(B)
     \Edge[label =a, labelstyle = below right](B)(A)
    \Edge[label = b, labelstyle = above](B)(C)
    \Edge[label = b, labelstyle = below](C)(B)
     \Edge[label = a, labelstyle = above](C)(D)
    \Edge[label = a, labelstyle = below](D)(C)
    \Edge[label = b, labelstyle = above](D)(E)
      \Edge[label = b, labelstyle = below](E)(D)
       \Edge[label = a, labelstyle = above](E)(F)
         \Edge[label =a, labelstyle = below](F)(E)
          \Edge[label = b, labelstyle = above](A)(H)
            \Edge[label =b, labelstyle = below](H)(A)
             \Edge[label =b, labelstyle = above](F)(G)
                \Edge[label =b, labelstyle = below](G)(F)
                \Edge[label = \tiny a, labelstyle = above left](G)(H)
   \Edge[label =\tiny a, labelstyle = below left](H)(G)
    \end{tikzpicture}}
     \end{figure}
    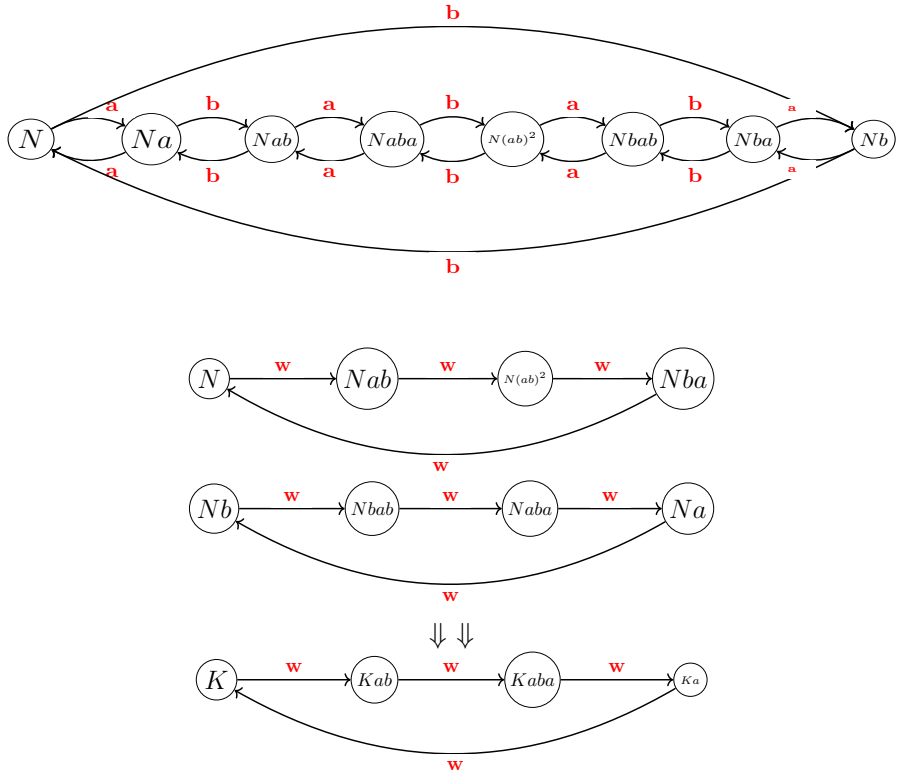
\begin{figure}[H] 
        \centering  \scalebox{0.7}{\begin{tikzpicture}
        \SetGraphUnit{4}
         \tikzset{VertexStyle/.append  style={fill}}
          \Vertex[L=$N$, x=-3,y=0]{A}
          \Vertex[L=$Nab$, x=0, y=0]{B}
        
        \Vertex[L=\tiny $N(ab)^2$, x=3, y=0]{C}
         \Vertex[L=$Nba$, x=6, y=0]{D}
        
        \Edge[label = w, labelstyle = below](D)(A)
     \tikzset{EdgeStyle/.style = {->}}   
      \Edge[label = w, labelstyle = above](A)(B)
              \Edge[label = w, labelstyle = above](B)(C)
              \Edge[label = w, labelstyle = above](C)(D)
      \end{tikzpicture}}\\
       \scalebox{0.7}{\begin{tikzpicture}
              \SetGraphUnit{4}
               \tikzset{VertexStyle/.append  style={fill}}
                \Vertex[L=$Nb$, x=-3,y=0]{A}
                \Vertex[L=\small$Nbab$, x=0, y=0]{B}
              
              \Vertex[L=\small $Naba$, x=3, y=0]{C}
               \Vertex[L=$Na$, x=6, y=0]{D}
             
              \Edge[label = w, labelstyle = below](D)(A)
         \tikzset{EdgeStyle/.style = {->}}      
             \Edge[label = w, labelstyle = above](A)(B)
                          \Edge[label = w, labelstyle = above](B)(C)
                          \Edge[label = w, labelstyle = above](C)(D)
            \end{tikzpicture}}\\
  \centering$\Downarrow$ $\Downarrow$\\
     
   \scalebox{0.7}{\begin{tikzpicture}
                \SetGraphUnit{4}
                 \tikzset{VertexStyle/.append  style={fill}}
                  \Vertex[L=$K$, x=-3,y=0]{A}
                  \Vertex[L=\small$Kab$, x=0, y=0]{B}
                
                \Vertex[L=\small $Kaba$, x=3, y=0]{C}
                 \Vertex[L=\tiny $Ka$, x=6, y=0]{D}
               
                \Edge[label = w, labelstyle = below](D)(A)
           \tikzset{EdgeStyle/.style = {->}}      
               \Edge[label = w, labelstyle = above](A)(B)
                            \Edge[label = w, labelstyle = above](B)(C)
                            \Edge[label = w, labelstyle = above](C)(D)
              \end{tikzpicture}}
              \caption{The combined graph $\tilde{X}_{N,K}(w)$:  \emph{Top}:$\tilde{X}_{N}(w)$, and \emph{Bot}:$\tilde{X}_{K}(w)$, with  arrows  delimiting between \emph{Top} and \emph{Bot}, $w=ab$.}  \label{fig-combined N-K}
    \end{figure}   \flushleft
  In \emph{Bot}, there is one  loop of length $4$, labelled $w^4$, since $o_K(w)=4$.  In \emph{Top}, there are $2$ disjoint  loops of length $4$, labelled $w^4$.  The fiber of $K$ is  the set $p^{-1}_w(K)=\{N,Nb\}$, with all its  elements just above $K$, the fiber of $Kab$ is  $p^{-1}_w(Kab)=\{Nab,Nbab\}$ with all its  elements just above $Kab$ and so on. 
 Now, consider  $H=\langle b,a^2,aba\rangle <F_2$ of index 2. Then $N<H$ and $\tilde{X}_{N,H}(ab)$ is:
 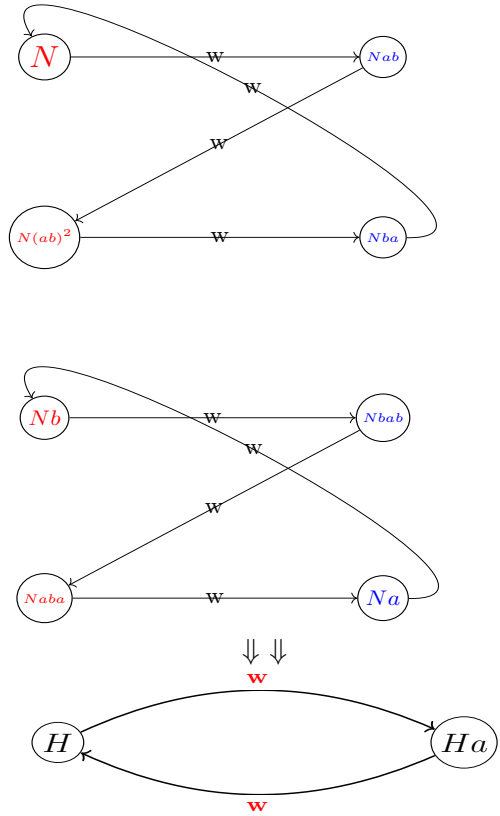
\begin{figure}[H]
\centering \scalebox{0.9}[0.8]{\begin{tikzpicture}
        \SetGraphUnit{10}
       \tikzset{VertexStyle/.append  style={fill}}
     
     \Vertex[L=\color{red}{$N$}, x=-5, y=3]{B}
         \Vertex[L=\tiny\color{red}{$N(ab)^2$}, x=-5,y=0]{A}
          \Vertex[L=\small\color{red}{$Nb$}, x=-5,y=-3]{X}
           \Vertex[L=\tiny\color{red}{$Naba$}, x=-5,y=-6]{Y}
        \SetGraphUnit{4}
         \tikzset{VertexStyle/.append  style={fill}}
          \tikzset{every loop/.style={min distance=10mm,looseness=10}}
         \Vertex[L=\tiny\color{blue}{$Nab$}, x=0, y=3]{D}
           \Vertex[L=\tiny\color{blue}{$Nba$}, x=0,y=0]{C}
        \Vertex[L=\tiny\color{blue}{$Nbab$}, x=0, y=-3]{Z}
 \Vertex[L=\small\color{blue}{$Na$}, x=0,y=-6]{W}

        \path[->]          (B)  edge                 node {w} (D);
          \path[->]          (D)  edge      node {w} (A);
         \path[->]          (A)  edge     node {w} (C);
          \path[->]          (C)  edge        [bend right=150]          node [swap] {w} (B);
            
                  \path[->]          (X)  edge                 node {w} (Z);
                    \path[->]          (Z)  edge      node {w} (Y);
                   \path[->]          (Y)  edge     node {w} (W);
                    \path[->]          (W)  edge        [bend right=150]          node [swap] {w} (X);
            \end{tikzpicture}}\\
       \centering$\Downarrow$ $\Downarrow$\\
 \centering   \scalebox{0.9}[0.7]{\begin{tikzpicture}
   \SetGraphUnit{4}
    \tikzset{VertexStyle/.append  style={fill}}

    \Vertex[L=$Ha$, x=-1, y=0]{B}
   \Vertex[L=$H$, x=-7,y=0]{A}
   
   \Edge[label = w, labelstyle = above](A)(B)

   \Edge[label = w, labelstyle = below](B)(A)
   \end{tikzpicture}}
 \caption{The combined graph $\tilde{X}_{N,H}(w)$:  \emph{Top}:$\tilde{X}_{N}(w)$, and \emph{Bot}:$\tilde{X}_{H}(w)$, with  arrows  delimiting between \emph{Top} and \emph{Bot}, $w=ab$.} \label{fig_graph_combineN_H}
 \end{figure}
  \flushleft
 The fiber of $H$ is  $p^{-1}_w(H)=\{N,N(ab)^2,Nb,Naba\}$, with all its  elements just above $H$  and the fiber of $Ha$ is $p^{-1}_w(Ha)=\{Nab,Nba,Nbab,Na\}$,  with all its  elements just above $Ha$.  In \emph{Bot}, there is one  loop of length $2$, labelled $w^2$, since $o_H(w)=2$. In \emph{Top}, there are $2$ disjoint  loops of length $4$, labelled $w^4$.  These are  exactly the same loops as in Figure \ref{fig-combined N-K}, but their distribution into the fibers are different. Here each fiber contributes two elements in a loop, while in \emph{Top} of $\tilde{X}_{N,K}(ab)$  each fiber contributes only one  element in a loop.
 \end{ex} 
  The next lemma describes some basic properties of the  combined  graph that are derived directly from its definition and the properties of $\tilde{X}_{H}(w)$, $\tilde{X}_{N}(w)$ (see Section \ref{subsec_schreier-graph}).
 \begin{lem}\label{lem_basic_combined}
  Let  $w \in F_n$.
 Let $H < F_n$ of index $d$ and  $N \lhd F_n$ of index $m$ such that $N \subset H$. Let   $\tilde{X}_{N,H}(w)$ be their combined  graph.  Then
 \begin{enumerate}[(i)]
  \item In Bot, there are  $d$ vertices.
  \item In Top, there are  $d$ fibers each of size  $\frac{m}{d}$.
 \item In  Top,  the  fiber over $Hx$ is  the set of  $\frac{m}{d}$ right cosets of $N$ whose disjoint union is $Hx$.
  \item In Top, there are exactly $\frac{m}{o_N(w)}$ disjoint loops labelled $w^{o_N(w)}$.
 \end{enumerate}
 \end{lem}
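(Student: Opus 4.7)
The proof is essentially an unpacking of the definition of $\tilde{X}_{N,H}(w)$, combined with elementary coset arithmetic and the earlier lemmas on $o(w,i)$. Each of the four items is short, so I will handle them in order, being careful to cite the ingredients used.

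For $(i)$, the bottom layer is by definition $\tilde{X}_H(w)$, which is the Schreier graph of $H$ with the edge labels restricted to $w$-steps; its vertex set is the set of right cosets of $H$ in $F_n$, which has cardinality $d=[F_n:H]$. For $(ii)$, Definition~\ref{def_combined} places above each vertex $\tilde{x}_i$ of Bot a fiber of size $\frac{m}{d}$, and since Bot has $d$ vertices by $(i)$, we obtain $d$ fibers, each of size $\frac{m}{d}$. The number $\frac{m}{d}$ itself is forced: since $N \subset H \subset F_n$, the tower-of-indices formula gives $[H:N] = \frac{[F_n:N]}{[F_n:H]} = \frac{m}{d}$, which matches the size of the fiber prescribed in the definition.

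For $(iii)$, fix a vertex $\tilde{x}_i$ of Bot corresponding to the coset $Ht_i$. Let $\{Ny_1,\dots,Ny_{m/d}\}$ be a set of right coset representatives of $N$ in $H$, so that $H = \bigsqcup_{j=1}^{m/d} Ny_j$. Right-multiplying by $t_i$ gives $Ht_i = \bigsqcup_{j=1}^{m/d} Ny_jt_i$, and the $Ny_jt_i$ are pairwise distinct right cosets of $N$ in $F_n$ (if $Ny_jt_i = Ny_kt_i$ then $Ny_j = Ny_k$). By Definition~\ref{def_combined} the fiber $p_w^{-1}(Ht_i)$ is precisely this set, so the fiber consists of the $\frac{m}{d}$ right cosets of $N$ whose disjoint union is $Ht_i$.

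For $(iv)$, Top is the Schreier graph $\tilde{X}_N(w)$ of the normal subgroup $N \lhd F_n$. By Lemma~\ref{lem_o(w,i)_normal}$(i)$ the order $o(w,i)$ is independent of $i$ and equals $o_N(w)$, so every $w$-loop in Top has length $o_N(w)$. By Lemma~\ref{lem_o(w,i)_normal}$(iii)$ these loops are disjoint and there are exactly $\frac{m}{o_N(w)}$ of them; alternatively, this follows from Lemma~\ref{lem_o(w,i)}$(ii),(iii)$ applied to $N$, since the sets $V_{w,i}$ partition the $m$ vertices of Top into blocks of size $o_N(w)$.

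The whole argument is bookkeeping, so there is no real obstacle; the only mild point is to check $(iii)$ carefully, since it is where the claim that the fiber structure is well-defined (i.e., that the choice of representatives $y_j$ does not matter and that distinct $Ny_jt_i$ remain distinct) is actually used.
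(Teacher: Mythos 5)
Your proposal is correct and follows exactly the route the paper intends: the paper gives no separate proof of this lemma, stating instead that it follows directly from Definition \ref{def_combined} together with the properties of $\tilde{X}_{H}(w)$ and $\tilde{X}_{N}(w)$ from Section \ref{subsec_schreier-graph}, which is precisely what you unpack (with $(iv)$ being Lemma \ref{lem_o(w,i)_normal} applied to $N$). Your extra care in $(iii)$ about the choice of representatives is a reasonable, if routine, verification of the same facts.
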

In the following lemmas, we describe the connection between the top layer and the bottom layer in the combined graph. The next lemma is very important for the rest of the paper, as  here we describe  entirely  the induced  loops in \emph{Top}. 
  \begin{lem}\label{lem_number_loops}
   Let  $w \in F_n$.
  Let $H < F_n$ of index $d$ and  $N \lhd F_n$ of index $m$ such that $N \subset H$. Let   $\tilde{X}_{N,H}(w)$ be their combined  graph, with  
 $\tilde{x}_0, \tilde{x}_1,...,\tilde{x}_{d-1}$ the vertices in \emph{Bot}. 
 Let $o(w,j)$ be the order of $w$  at a chosen vertex $\tilde{x}_j$ in  \emph{Bot}  and let $V_{w,j}$ be  the set of vertices visited at each $w$-step in this  loop.  Let $p_w^{-1}(V_{w,j})$ be the set of fibers over $V_{w,j}$ in \emph{Top}. Then
  \begin{enumerate}[(i)]
  \item $\mid p_w^{-1}(V_{w,j}) \mid= \mid V_{w,j} \mid =o(w,j)$.
   
  \item The loop in $\emph{Bot}$ labelled $w^{o(w,j)}$ between the vertices in $V_{w,j}$  induces $\frac{\frac{m}{d}}{ \frac{o_N(w)}{o(w,j)}}$
  loops labelled $w^{o_N(w)}$ between the $o(w,j)$ fibers in  $p^{-1}(V_{w,j})$ in $\emph{Top}$.
  \item  Each fiber in  $p_w^{-1}(V_{w,j})$ contributes exactly  $\frac{o_N(w)}{o(w,j)}$ elements in an induced  loop labelled $w^{o_N(w)}$.
  
  \end{enumerate}
  \end{lem}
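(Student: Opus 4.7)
The plan is to analyse the action of $w$ on the top layer, fiber by fiber, using the fact that above each vertex of \emph{Bot} representing $Ht_i$ sits the collection of $m/d$ cosets of $N$ whose union is $Ht_i$.

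For (i), I would invoke Lemma \ref{lem_o(w,i)}(i) to get $|V_{w,j}|=o(w,j)$. For the fiber-count, since the cosets of $N$ partition the cosets of $H$, the fibers over distinct vertices of \emph{Bot} are disjoint, so $|p_w^{-1}(V_{w,j})|=|V_{w,j}|$.

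The key intermediate step is to show that $o(w,j)$ divides $o_N(w)$. Indeed, $w^{o_N(w)}\in N$, and because $N\lhd F_n$ and $N\subset H$, one has $N=t_j^{-1}Nt_j\subset t_j^{-1}Ht_j$; hence $w^{o_N(w)}$ is a loop at every vertex of \emph{Bot}, and Lemma \ref{lem_o(w,i)}(v) gives $o(w,j)\mid o_N(w)$. Write $s=o_N(w)/o(w,j)$. Now I would observe that the covering map $p_w\colon\tilde{X}_N(w)\to\tilde{X}_H(w)$ is equivariant for the right action of $F_n$: if $Ny\subset Ht_i$, then $Nyw\subset Ht_iw$, so one $w$-step starting in the fiber $p_w^{-1}(\tilde{x}_i)$ lands in the fiber $p_w^{-1}(\tilde{x}_iw)$. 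Consequently, starting from any $Ny\in p_w^{-1}(\tilde{x}_j)$, the successive $w$-steps in \emph{Top} visit the fibers over $V_{w,j}$ cyclically (in the same order that the loop in \emph{Bot} visits the $\tilde{x}_{j_k}$), returning to the fiber $p_w^{-1}(\tilde{x}_j)$ after exactly $o(w,j)$ steps.

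By Lemma \ref{lem_o(w,i)_normal}(i), every loop of $w$ in \emph{Top} has length $o_N(w)=s\cdot o(w,j)$. Combined with the cyclic pattern above, a single loop of length $o_N(w)$ starting at $Ny$ passes through each of the $o(w,j)$ fibers in $p_w^{-1}(V_{w,j})$ exactly $s$ times, which proves (iii). Finally, Lemma \ref{lem_o(w,i)}(ii) applied in \emph{Top} tells me that the $w$-loops through vertices of $p_w^{-1}(V_{w,j})$ are pairwise disjoint, so they partition this set. The total number of vertices is $o(w,j)\cdot(m/d)$, and each loop uses $o_N(w)$ of them; dividing gives
\[
\frac{o(w,j)\cdot(m/d)}{o_N(w)}=\frac{m/d}{o_N(w)/o(w,j)},
\]
which is (ii).

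The only real subtlety I anticipate is verifying that $o(w,j)\mid o_N(w)$ via the normality of $N$ inside $t_j^{-1}Ht_j$; once that divisibility is in hand, everything else is a clean bookkeeping argument on how the cyclic $w$-action on fibers packages into loops of the prescribed length.
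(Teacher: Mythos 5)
Your proof is correct and takes essentially the same approach as the paper: both arguments use the equivariance of the fibers under the right action of $w$ (a $w$-step over $Ht_i$ lands over $Ht_iw$) together with the fact that every $w$-loop in \emph{Top} has length $o_N(w)$, and then count. The only differences are organizational — you make explicit the divisibility $o(w,j)\mid o_N(w)$, which the paper uses implicitly, and you obtain (ii) by dividing the $o(w,j)\cdot\frac{m}{d}$ vertices of $p_w^{-1}(V_{w,j})$ by the loop length $o_N(w)$, whereas the paper divides the fiber size $\frac{m}{d}$ by the per-loop contribution $\frac{o_N(w)}{o(w,j)}$; the two bookkeepings are equivalent.
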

 \begin{proof}
 $(i)$  As $N\subset H$, the action of $F_n$ on the right cosets of $H$ induces an action of $F_n$ on the corresponding right cosets of $N$.\\
 $(ii),(iii)$  To simplify the notation, let $k=o(w,j)$, let $\ell$ denote the   loop in $\emph{Bot}$ labelled $w^k$  at $\tilde{x}_j$. Assume  $\tilde{x}_j$ represents the right coset $Hx$ and $p^{-1}_w(Hx)=\{Ny_1,Ny_2,...,Ny_{\frac{m}{d}}\}$.  Each  $w$-step in $\emph{Bot}$ between two vertices $Hx$ and $Hxw$  induces a $w$-step in $\emph{Top}$ between   $Ny_i$ and $Ny_iw$, for  every $1 \leq i \leq \frac{m}{d}$. 
  So, $\ell$  induces 
  $Ny_i\rightarrow Ny_iw \rightarrow Ny_iw^2\rightarrow ...\rightarrow Ny_iw^{k-1}\rightarrow Ny_iw^{k}\rightarrow Ny_iw^{k+1}...$. From  Definition \ref{defn_order}$(i)$, $k$ is the minimal natural number such that $w^k \in x^{-1}Hx$ or equivalently $Hxw^k=Hx$. So, 
  $Ny_iw^{k} \in p^{-1}_w(Hx)$, $Ny_iw^{k+1} \in p^{-1}_w(Hxw)$ and so on.\\
   If $k=o_N(w)$, then  $Ny_iw^{k}=Ny_i$, for all  $ 1 \leq i \leq \frac{m}{d}$ and  each fiber in  $p^{-1}(V_{w,j})$ contributes exactly  $\frac{o_N(w)}{o(w,j)}=\frac{o_N(w)}{k}=1$ element in a loop labelled $w^{o_N(w)}$.  Since there are $\frac{m}{d}$ elements in each fiber, the loop in $\emph{Bot}$ labelled $w^{k}$ induces $\frac{\frac{m}{d}}{ \frac{o_N(w)}{k}}= \frac{m}{d}$
  loops labelled $w^{o_N(w)}$ between the $o(w,j)$ fibers in  $p^{-1}(V_{w,j})$.\\
  Assume now that $\frac{o_N(w)}{k}=p>1$. Then  $Ny_iw^{k} \in p^{-1}_w(Hx)$, but $Ny_iw^{k} \neq Ny_i$ and  $Ny_iw^{pk}=Ny_i$. In fact, $Ny_iw^{k} = Ny_{i_1}$,  $Ny_iw^{2k} = Ny_{i_2}$,..., $Ny_iw^{(p-1)k} = Ny_{i_{p-1}}$ for different $i_1,i_2,...,i_{p-1}$ between $1$ and $\frac{m}{d}$. So, $\ell$ induces a loop of length $o_N(w)$ between the $k$ fibers of the form:\\
 $\begin{array}{ccccccc}
 Ny_i  &\rightarrow & Ny_iw &\rightarrow & Ny_iw^2 &\rightarrow ... & Ny_iw^{k -1}\rightarrow\\ 
 Ny_iw^{k} & \rightarrow & Ny_iw^{k+1} & \rightarrow & Ny_iw^{k+2} & \rightarrow ...&  Ny_iw^{2k-1}\rightarrow\\
 ... & ...& ... &... &... &...\\ 
 Ny_iw^{(p-1)k} & \rightarrow & Ny_iw^{(p-1)k+1} & \rightarrow &  Ny_iw^{(p-1)k+2} & \rightarrow &Ny_iw^{kp -1} \\
  \end{array}$
  In such a loop, each fiber contributes exactly  $\frac{o_N(w)}{k}=p$ elements.  Since there are $\frac{m}{d}$ elements in each fiber, $\ell$ induces $\frac{\frac{m}{d}}{ \frac{o_N(w)}{k}}$
   loops labelled $w^{o_N(w)}$ between the $o(w,j)$ fibers in  $p^{-1}(V_{w,j})$.
 \end{proof}

       \begin{rem}
      As Example \ref{ex_combined_graph} illustrate it, given $N \lhd F_n$ of index $m$  and $w \in F_n$, the number of loops and the length of the loops  in \emph{Top} of any combined graph   $\tilde{X}_{N,H}(w)$ are independent of the subgroup $H$. Indeed, $o_N(w)$ is determined by $\tilde{X}_N$ and the loops correspond to the action of $w$ on the cosets of $N$, or more generally if $w=a,b$ they describe  the action of $F_n$ on the cosets of $N$.  Yet, changing $H$ changes the fibers, the size of the fibers ($=\frac{m}{d_H}$) and the number of elements  ($=\frac{o_N(w)}{o_H(w)}$) contributed by each fiber in a loop of length $o_N(w)$.
       \end{rem}
 \section{The  HS conjecture and covering spaces}\label{sec_hs-covering}

Let $\{H_i\alpha_i\}_{i=1}^{i=s}$ be a coset  partition of $F_n$ with $H_i<F_n$ of index $d_i>1$, $\alpha_i \in F_n$, $1 \leq i \leq s$.
   Let $N_i=\bigcap\limits_{g \in F_n} g^{-1}H_ig$ of index $m_i$. Let $N=\bigcap\limits_{i=1}^{i=s}
     N_i$ of index $m$. For every $1 \leq i \leq s$, and for any $w \in F_n$, there exists a combined graph $\tilde{X}_{N,H_i}(w)$, since $N \lhd F_n$ and $N \subset H_i$. At \emph{Top} of  every combined graph $\tilde{X}_{N,H_i}(w)$, there are  $\frac{m}{o_N(w)}$ loops of length $o_N(w)$ labelled $w^{o_N(w)}$. Yet, for each $i$, \emph{Top} of  $\tilde{X}_{N,H_i}(w)$ looks different, since  the fibers and  the connections between them are different and they depend on the subgroup $H_i$ (see Section \ref{sec_combined}).  Our aim in the following section is to describe graphically the coset partition in terms of the combined graphs  $\tilde{X}_{N,H_i}(w)$, for any $w \in F_n$. In particular, we introduce a new graph, the HS-colored graph, to describe the coset  partition.
   
 \subsection{A graphical description of a coset partition of $F_n$ with the Schreier graphs}\label{sec_hs-graph}
  Let   $w \in F_n$, with order $o_N(w)$. Let  $\tilde{X}_{N}(w)$ be the Schreier graph of $N$. Let $\tilde{X}_{N,i}(w)$ denote the  $s$ combined graphs $\tilde{X}_{N,H_i}(w)$,  $1 \leq i\leq s$.  Let denote by $p_w^{-1}(H_i\alpha_i)$  the fiber over $H_i\alpha_i$ in \emph{Top} of the combined graph $\tilde{X}_{N,i}(w)$.  In each $\tilde{X}_{N,i}(w)$, $1 \leq i \leq s$,   let color in color $c_i$  the vertex representing the coset $H_i\alpha_i$ at \emph{Bot}, and all the elements in the fiber $p_w^{-1}(H_i\alpha_i)$ in \emph{Top}. We keep record of the  color attributed to each coset of $N$, and color accordingly the vertices in $\tilde{X}_{N}(w)$.

  \begin{defn}\label{defn_coloredXN}
   Let $\{H_i\alpha_i\}_{i=1}^{i=s}$ be a coset  partition of $F_n$ with $H_i<F_n$ of index $d_i$, $\alpha_i \in F_n$, $1 \leq i \leq s$. Let $N=\bigcap\limits_{i=1}^{i=s}
   \bigcap\limits_{g \in F_n} g^{-1}H_ig$ of index $m$, with Schreier graph $\tilde{X}_{N}(w)$,    $w\in F_n$. 
  \begin{enumerate}[(i)]
  \item  We define  $\bar{X}_{N}(w)$, \emph{the HS-colored  graph}, to be   $\tilde{X}_{N}(w)$  with colored vertices: a vertex is colored in color $c_i$ if and only if it  belongs to $p^{-1}_w(H_i\alpha_i)$ in \emph{Top} of $\tilde{X}_{N,i}(w)$, $1\leq i \leq s$.
  \item  We define \emph{the order of $w$  relative to  $H_i\alpha_i$},  $o_{*i}(w)$,  to be the order of $w$ in  \emph{Bot} of $\tilde{X}_{N,i}(w)$ relative to the  vertex $H_i\alpha_i$, that is the minimal natural number such that  $w^{o_{*i}(w)} \in \alpha_i^{-1}H_i\alpha_i$, or equivalently  $w^{o_{*i}(w)}$ is a loop at the vertex $H_i\alpha_i$ in  \emph{Bot} of $\tilde{X}_{N,i}(w)$. \end{enumerate}
  \end{defn}
   Since $\{H_i\alpha_i\}_{i=1}^{i=s}$ is  a coset  partition of $F_n$ and each fiber $p^{-1}_w(H_i\alpha_i)$ contains exactly $\frac{m}{d_i}$ cosets of $N$, we have that each vertex  in $\bar{X}_{N}(w)$  is colored in one and only one color, and
     $m=\sum\limits_{i=1}^{i=s}\frac{m}{d_i}$. 
  
  \begin{ex}\label{ex_partitionF2-4-cycle}
  Consider this partition of $F_2$: $F_2= H_1 \bigcup H_2a \bigcup H_3ab$, where $H_1=H=\langle b,a^2,aba\rangle$ and $H_2=H_3=K$ from Example \ref{ex_cov_subgrps}. Let $N \lhd F_2$ of index $8$ from Example \ref{ex_combined_graph}, $N$ is the intersection of the normal cores of $H_1$, $H_2$ and  $H_3$. 
    Let $w=ab$ with $o_N(w)=4$, $o_{*1}(w)=2$, $o_{*2}(w)=o_{*3}(w)=4$.  The HS-colored graph $\bar{X}_{N}(w)$ is described below:
       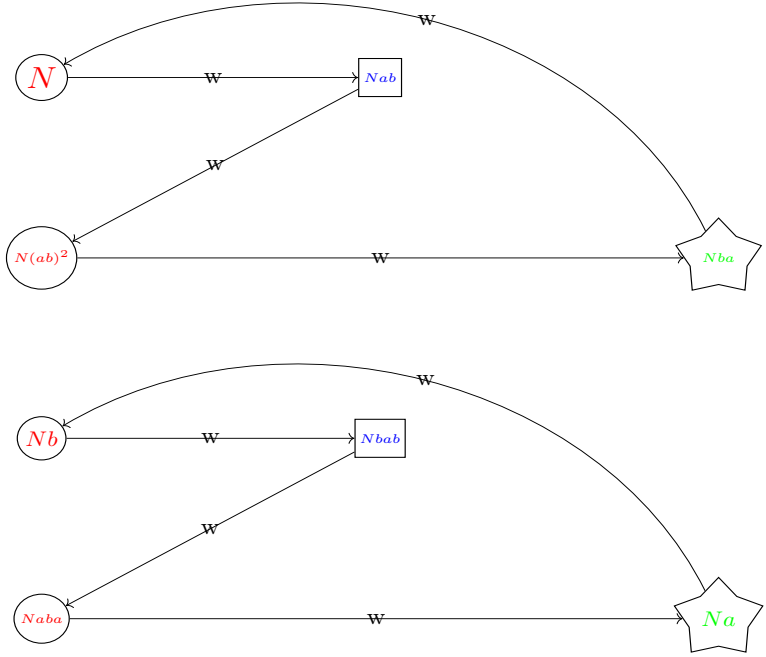
\begin{figure}[H]
\centering   \scalebox{0.9}[0.8]{\begin{tikzpicture}
         \SetGraphUnit{10}
        \tikzset{VertexStyle/.append  style={fill}}
      
      \Vertex[L=\color{red}{$N$}, x=-5, y=3]{B}
          \Vertex[L=\tiny\color{red}{$N(ab)^2$}, x=-5,y=0]{A}
           \Vertex[L=\small\color{red}{$Nb$}, x=-5,y=-3]{X}
            \Vertex[L=\tiny\color{red}{$Naba$}, x=-5,y=-6]{Y}
         \SetGraphUnit{4}
          \tikzset{VertexStyle/.append  style={fill}}
           \tikzset{every loop/.style={min distance=10mm,looseness=10}}


    \tikzset{VertexStyle/.append style={rectangle}}  
     \Vertex[L=\tiny\color{blue}{$Nab$}, x=0, y=3]{D}
      \Vertex[L=\tiny\color{blue}{$Nbab$}, x=0, y=-3]{Z}
       \tikzset{VertexStyle/.append style={star}} 
     \Vertex[L=\tiny\color{green}{$Nba$}, x=5,y=0]{C}
      \Vertex[L=\small\color{green}{$Na$}, x=5,y=-6]{W}     
         \path[->]          (B)  edge                 node {w} (D);
           \path[->]          (D)  edge      node {w} (A);
          \path[->]          (A)  edge     node {w} (C);
           \path[->]          (C)  edge        [bend right=50]          node [swap] {w} (B);
             
                   \path[->]          (X)  edge                 node {w} (Z);
                     \path[->]          (Z)  edge      node {w} (Y);
                    \path[->]          (Y)  edge     node {w} (W);
                     \path[->]          (W)  edge        [bend right=50]          node [swap] {w} (X);
             \end{tikzpicture}}\\
             \caption{The  HS-colored graph $\bar{X}_{N}(w)$, with $N\lhd F_2$, $w=ab$.}\label{fig_ex_partition_F2}
   \end{figure}  \end{ex} 
The vertices in  $p^{-1}_w(H_1)=\{N,N(ab)^2,Nb,Naba\}$ are colored in red (circles), those in $p^{-1}_w(H_2a)=\{Nba,Na\}$  in green (stars) and those in $p^{-1}_w(H_3ab)=\{Nab,Nbab\}$  in blue (squares). 
From Lemma \ref{lem_number_loops}, the fiber $p^{-1}_w(H_1)$ participates in $\frac{\frac{m}{d_1}}{\frac{o_N(w)}{o_{*1}(w)}}=2$ loops and in each loop it contributes $\frac{o_N(w)}{o_{*1}(w)}=2$ elements, the fiber $p^{-1}_w(H_2a)$ participates in $\frac{\frac{m}{d_2}}{\frac{o_N(w)}{o_{*2}(w)}}=2$ loops and in each loop it contributes $\frac{o_N(w)}{o_{*2}(w)}=1$ element and the same holds for the fiber $p^{-1}_w(H_3ab)$.   \\
   
  Before we go on with $F_n$ with $n\geq 2$, we give a  graphical description of  a coset partition in the special case of  $F_1=\mathbb{Z}$.
\subsection{A graphical description of  a coset partition of $\mathbb{Z}$}
 The group of integers $\mathbb{Z}=\langle 1 \rangle$ is the fundamental group of the bouquet $X$ with only one circle labelled $1$.
 For each subgroup $H=o\mathbb{Z}$ of index $o>1$, there exists a $o$-fold covering $\tilde{X}_H$, which is  an oriented loop of length $o>1$ and labelled $o$.  Let $\{o_i\mathbb{Z} +r_i\}_{i=1}^{i=t}$, $r_i \in \mathbb{Z}$, be  a  coset partition of $\mathbb{Z}$. Let fix $w=1$, so that  $o_i(1)=o_i$, and $\tilde{X}_{H}$ and $\tilde{X}_{H}(1)$ coincide, for any $H<\mathbb{Z}$. Let $N=o_N\mathbb{Z}$, with $o_N$  divisible by $\ell cm(o_1,...,o_t)$, so $N \subseteq \bigcap\limits_{i=1}^{i=t}o_i\mathbb{Z}$.  Let  $\bar{X}_{N}$ be the HS-colored graph as defined in Section \ref{sec_hs-graph}. Since $o_N(1)=o_N$, there is a unique loop of length $o_N$. So,  all the $t$ fibers participate in this unique loop and from Lemma \ref{lem_number_loops}, each fiber contributes all its $\frac{o_N}{o_i}$$(=\frac{o_N(1)}{o_i(1)})$ elements in this  loop. That is,  $\sum_{i=1}^{i=t}\frac{o_N}{o_i}=o_N$. And conversely, 
  \begin{lem}\label{lem_formule_sum=on}
  Let $c_i$, $1 \leq i \leq t$, be different colors. Let $o_N$, $o_i>1$, $1 \leq i \leq t$, be natural numbers such that  $\ell cm(o_1,...,o_t)$ divides  $o_N$. 
 If there exists a loop $\ell$ of length $o_N =\sum_{i=1}^{i=t}\frac{o_N}{o_i}$, with each package of $\frac{o_N}{o_i}$ vertices colored in $c_i$, $1 \leq i \leq t$ and such that any two consecutive vertices colored in the same color $c_i$ are at $o_i$  distance.
 Then there exist $r_i \in\mathbb{Z}$, $1 \leq i \leq t$, such that  $\{o_i\mathbb{Z} +r_i\}_{i=1}^{i=t}$ is a  coset partition of $\mathbb{Z}$.
  \end{lem}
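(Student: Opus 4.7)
The plan is to label the vertices of the loop $\ell$ cyclically by $0,1,\ldots,o_N-1$, interpret vertex $v$ as representing the coset $o_N\mathbb{Z}+v$ of $N=o_N\mathbb{Z}$ in $\mathbb{Z}$, and then read off the representatives $r_i$ directly from the coloring.

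First, I would fix for each $i$ a specific $c_i$-colored vertex and let $r_i$ be its label. The hypothesis that any two consecutive $c_i$-colored vertices are at distance $o_i$, combined with the fact that there are exactly $\frac{o_N}{o_i}$ such vertices and $\frac{o_N}{o_i}\cdot o_i=o_N$, forces the $c_i$-colored vertices to occupy precisely the positions $r_i, r_i+o_i,\, r_i+2o_i,\ldots, r_i+(\tfrac{o_N}{o_i}-1)\,o_i$ modulo $o_N$. In other words, the set of $c_i$-colored labels, viewed inside $\mathbb{Z}/o_N\mathbb{Z}$, is exactly the image of the residue class $r_i+o_i\mathbb{Z}$ under the canonical projection $\pi:\mathbb{Z}\twoheadrightarrow\mathbb{Z}/o_N\mathbb{Z}$ (this uses $o_i\mid o_N$).

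Second, I would translate the coloring into a statement about cosets of $N$. Under the identification vertex $v \leftrightarrow o_N\mathbb{Z}+v$, the union of $c_i$-colored cosets of $N$ is
\[
\bigcup_{k=0}^{\frac{o_N}{o_i}-1}\bigl(o_N\mathbb{Z}+r_i+ko_i\bigr)\;=\;o_i\mathbb{Z}+r_i,
\]
since every integer congruent to $r_i$ modulo $o_i$ lies in exactly one of the $o_N$-residue classes on the right. Thus the $s$ cosets $o_i\mathbb{Z}+r_i$ are realized as the $s$ monochromatic unions of $N$-cosets.

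Finally, since $\bar{X}_N$ has every vertex colored in exactly one color (this is the hypothesis that the colors partition the $o_N$ vertices, and is the graphical translation of the partition property in Lemma~\ref{lem_number_loops} applied to $\mathbb{Z}$), the sets $\{o_i\mathbb{Z}+r_i\}_{i=1}^{t}$ are pairwise disjoint and cover $\mathbb{Z}$, so they form a coset partition. The only subtle point in the argument is the equal-spacing observation in the first step; it is essentially forced by a pigeonhole-style count, but it should be stated cleanly since it is the hinge that converts the combinatorial distance hypothesis into a purely arithmetic statement about residues modulo $o_i$.
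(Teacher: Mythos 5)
Your proposal is correct and follows essentially the same route as the paper: label the vertices of the loop by the cosets of $N=o_N\mathbb{Z}$, use the equal-spacing hypothesis to identify the $c_i$-colored vertices with the arithmetic progression $r_i, r_i+o_i,\ldots,r_i+(\tfrac{o_N}{o_i}-1)o_i$ whose union of $N$-cosets is $o_i\mathbb{Z}+r_i$, and conclude disjointness and covering from the one-color-per-vertex condition together with $o_N=\sum_i \tfrac{o_N}{o_i}$. The only cosmetic difference is that you phrase the identification via $\mathbb{Z}/o_N\mathbb{Z}$ and make the spacing argument explicit, which is a fair (and slightly more careful) rendering of the same proof.
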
  
 \begin{proof}
   We set $N=o_N\mathbb{Z}$ and $H_i=o_i\mathbb{Z}$, $1 \leq i \leq t$. Since  $\ell cm(o_1,...,o_t)$ divides  $o_N$, $N \subseteq \bigcap\limits_{i=1}^{i=t}H_i$.  We show that a  loop $\ell$ satisfying  the conditions of the lemma describes the Schreier  graph of $N$  and  a coset partition   $\{o_i\mathbb{Z} +r_i\}_{i=1}^{i=t}$. Choose any vertex in $\ell$ and call it $N$ and the adjacent vertex $N+1$ and so on until $N+o_N-1$. If in $\ell$, any two consecutive vertices colored in the same color $c_i$ are at $o_i$  distance, then there exist $r_i \in\mathbb{Z}$, $1 \leq i \leq t$, such that the vertices colored $c_i$ are $N+r_i$, $N+r_i +o_i$, $N+r_i +2o_i$,...,$N+r_i +(\frac{o_N}{o_i}-1)o_i$,  that is these   are  $\frac{o_N}{o_i}$ cosets of $N$ whose union is $o_i\mathbb{Z}+r_i$. Next, since each vertex in  $\ell$ is  colored in only one color, the sets $o_i\mathbb{Z}+r_i$, $1 \leq i \leq t$, are all disjoint. At last,  $o_N =\sum_{i=1}^{i=t}\frac{o_N}{o_i}$ implies that $\mathbb{Z}$ is partitioned  by these disjoint cosets. 
 \end{proof} 
 
 \begin{thm}\cite[Theorem 2]{newman}\cite[Theorems 4,5]{znam}\label{theo_Z}\\
  Let  $\{o_i\mathbb{Z} +r_i\}_{i=1}^{i=t}$  be a coset partition of $\mathbb{Z}$.  Then: 
    \begin{enumerate}[(i)]
    \item The natural numbers $\{o_{i}\mid 1 \leq i \leq t\}$ are not pairwise prime.
      \item  $o_{max}=Max \{o_{i}\mid  1 \leq i \leq t\}$ appears at least $p$ times, where $p$ is the smallest prime dividing $o_{max}$.
      \item Any $o_{l}$ divides another $o_{k}$, where $ 1 \leq k \neq l \leq t$.
      \item Any $o_{k}$ that does not properly divide any other $o_{i}$  appears at least twice, $ 1 \leq k,i\leq t$.
        \end{enumerate} 
  \end{thm}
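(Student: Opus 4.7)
The plan is to follow the classical analytic method of Davenport--Mirsky--Newman--Rado, deducing all four parts from a single functional identity. After choosing representatives with $0\le r_i<o_i$, I encode each class $o_i\mathbb{Z}+r_i$ by the rational generating series $f_i(z)=z^{r_i}/(1-z^{o_i})$ of its non-negative members. Since $\{o_i\mathbb{Z}+r_i\}$ partitions $\mathbb{Z}$, its restriction to $\mathbb{N}$ partitions $\mathbb{N}$, so
\[
\sum_{i=1}^{t} \frac{z^{r_i}}{1-z^{o_i}} \;=\; \frac{1}{1-z}
\]
holds on the open unit disk, and hence by meromorphic continuation on all of $\mathbb{C}$. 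Every other claim will be extracted from this identity.

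Parts (i), (iii), and (iv) come from probing the identity at roots of unity. Fix any value $o$ occurring among the $o_i$, let $\zeta$ be a primitive $o$-th root of unity, and note that $1/(1-\zeta^{o_i})$ is finite unless $o\mid o_i$; since $\zeta\ne 1$, the right-hand side is regular at $\zeta$, so the residues contributed by the singular $f_i$ must cancel. For (iii), take $o=o_l$: if $o_l$ appears exactly once and no $o_k$ with $k\ne l$ is a multiple of $o_l$, then the pole of $f_l$ at $\zeta$ stands uncancelled, a contradiction; when $o_l$ repeats, (iii) is immediate. Part (i) is the special case $o_l=o_{\max}$, which forces some $o_k=o_{\max}$ with $k\ne l$ and thereby breaks pairwise coprimality. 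Part (iv) is a tail of (iii): if $o_k$ properly divides no other $o_i$ yet appears only once, then (iii) supplies some $o_j\ne o_k$ with $o_k\mid o_j$, forcing $o_k<o_j$ and contradicting the hypothesis.

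The sharper statement (ii) requires $|I|\ge p$, where $I=\{i:o_i=o_{\max}\}$ and $p$ is the smallest prime dividing $o_{\max}$. Running the residue calculation through every primitive $o_{\max}$-th root of unity $\zeta$ simultaneously gives $\sum_{i\in I}\zeta^{r_i}=0$ for all such $\zeta$, which is exactly the statement that the cyclotomic polynomial $\Phi_{o_{\max}}$ divides $g(z):=\sum_{i\in I}z^{r_i}$. The exponents $r_i$ for $i\in I$ are pairwise distinct modulo $o_{\max}$ (else two cosets would coincide), so $g$ is a nonzero polynomial with exactly $|I|$ monomials, and the claim reduces to the arithmetic fact that any nonzero multiple of $\Phi_n$ has at least $p$ nonzero coefficients. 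This last fact is the real obstacle; I would establish it by induction on the number of distinct prime divisors of $n$, starting from the base $n=p$ prime (where $\Phi_p=1+z+\cdots+z^{p-1}$ forces the bound directly via reduction modulo $z^p-1$) and stepping up through the standard identities $\Phi_{np}(z)=\Phi_n(z^p)$ when $p\mid n$ and $\Phi_{np}(z)=\Phi_n(z^p)/\Phi_n(z)$ when $p\nmid n$, which allow one to compare supports of multiples across levels. Everything else in the argument is routine bookkeeping around the single generating-function identity.
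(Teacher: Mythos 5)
The paper itself does not prove this statement; it quotes it from Newman \cite{newman} and Zn\'am \cite{znam}, so there is no internal proof to compare against. Your generating-function route is the classical Davenport--Mirsky--Newman--Rado argument, and your treatment of (i), (iii) and (iv) is sound (modulo the standard implicit assumption that the partition is nontrivial, i.e.\ $t\geq 2$, so that $o_{\max}>1$ and the relevant root of unity $\zeta\neq 1$): if $o_l$ appeared once with no other $o_k$ divisible by it, the pole of $f_l$ at a primitive $o_l$-th root of unity would be uncancelled, and (i), (iv) follow from (iii) exactly as you say. The reduction of (ii) to the divisibility $\Phi_{o_{\max}}\mid g$, $g(z)=\sum_{i\in I}z^{r_i}$ with distinct exponents, via residue cancellation at every primitive $o_{\max}$-th root of unity, is also correct.

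The genuine gap is the arithmetic fact you reduce (ii) to. As stated --- ``any nonzero multiple of $\Phi_n$ has at least $p$ nonzero coefficients,'' $p$ the smallest prime dividing $n$ --- it is false: $z^n-1=\prod_{d\mid n}\Phi_d(z)$ is a multiple of $\Phi_n$ with exactly two nonzero coefficients, while already for $n=3$ one has $p=3$. Consequently the induction you sketch through $\Phi_{np}(z)=\Phi_n(z^p)$ and $\Phi_{np}(z)=\Phi_n(z^p)/\Phi_n(z)$ cannot succeed, since it is aimed at a false statement. What saves the argument is a hypothesis you dropped: your $g$ has $0/1$ (in particular nonnegative) coefficients, and the correct statement is that a nonzero multiple of $\Phi_n$ with nonnegative integer coefficients has coefficient sum at least $p$ --- equivalently, a nonempty vanishing sum of $n$-th roots of unity has at least $p$ terms (indeed its length lies in $\mathbb{N}p_1+\cdots+\mathbb{N}p_r$ over the primes dividing $n$). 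This is the Lam--Leung theorem \cite{lam} (the weaker bound needed here is classical), and its proof genuinely uses the positivity; it is not the routine bookkeeping your sketch suggests. So either invoke that result explicitly or supply a proof that tracks nonnegativity; as written, part (ii) is not established.
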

 \section{Main results on the  Herzog-Sch\"onheim conjecture for  free groups of finite rank}
  In this section, we give some conditions that ensure  a partition of the free group of finite rank has multiplicity. The notation is the following for the rest of this section. Let $F_n$ be the free group on $n \geq 2$ generators. Let $\{H_i\alpha_i\}_{i=1}^{i=s}$ be a coset  partition of $F_n$ with $H_i<F_n$ of index $d_i>1$, $\alpha_i \in F_n$, $1 \leq i \leq s$, and $d_1 \leq ...\leq d_s$.  Let $N_i=\bigcap\limits_{g \in F_n} g^{-1}H_ig$ of index $m_i$. Let $N=\bigcap\limits_{i=1}^{i=s}
          N_i$ of index $m$.  Let   $w \in F_n$.  Let $\tilde{X}_{i}(w)$ denote the  Schreier  graph $\tilde{X}_{H_i}(w)$,  $1 \leq i\leq s$ and  $\tilde{X}_{N}(w)$ be the Schreier graph of $N$. Let $\tilde{X}_{N,i}(w)$ denote the   combined graph $\tilde{X}_{N,H_i}(w)$,  $1 \leq i\leq s$.
           Let denote by $p_w^{-1}(H_i\alpha_i)$  the fiber over $H_i\alpha_i$ in \emph{Top} of the combined graph $\tilde{X}_{N,i}(w)$.  Let $\bar{X}_{N}(w)$ be the HS-colored  graph.
            Let  $o_{*i}(w)$,  $1 \leq i \leq s$,  be the order of $w$ relative to the  vertex $H_i\alpha_i$.   
 \subsection{Description of the loops in the HS-colored graph}
 
   In \emph{Top} of any   combined graph, there are the same  $\frac{m}{o_N(w)}$ disjoint loops of length $o_N(w)$ from   $\tilde{X}_{N}(w)$. But, for different subgroups $H_i$ and $H_j$, the fibers in \emph{Top} of $\tilde{X}_{N,i}(w)$ and $\tilde{X}_{N,j}(w)$ are different and so  \emph{Top} of $\tilde{X}_{N,i}(w)$ and $\tilde{X}_{N,j}(w)$ look also  different. The HS-colored  graph $\bar{X}_{N}(w)$ describes the  partition  $\{H_i\alpha_i\}_{i=1}^{i=s}$,   with each fiber  $p^{-1}_w(H_i\alpha_i)$ colored in a different color   (see Figures \ref{fig-combined N-K},  \ref{fig_graph_combineN_H} and \ref{fig_ex_partition_F2}). The question now is how the colored fibers are connected in $\bar{X}_{N}(w)$. In the following lemmas, we aim to answer this question and describe the loops in $\bar{X}_{N}(w)$.\\
 Note that $w \in \bigcap\limits_{i=1}^{i=s}\alpha_i^{-1}H_i\alpha_i$ if and only if $o_{*i}(w)=1$ for every $1 \leq i \leq s$, so if $w \in F_n \setminus \bigcap\limits_{i=1}^{i=s}\alpha_i^{-1}H_i\alpha_i$, then there exists $1 \leq i_1 \leq s$ such that  $o_{*i_1}(w)>1$.
  \begin{lem}\label{lem_loops_s_fibers}
Let $w \in F_n \setminus \bigcap\limits_{i=1}^{i=s}\alpha_i^{-1}H_i\alpha_i$, with order $o_N(w)$. Then in  $\bar{X}_{N}(w)$,  the HS-colored graph, the following occurs:
   \begin{enumerate}[(i)]
   \item There are $\frac{m}{o_N(w)}$ disjoint loops of length
    $o_N(w)$ (labelled $w^{o_N(w)}$).
   \item If the fiber $p_w^{-1}(H_i\alpha_i)$ participates in a loop, then it contributes $\frac{o_N(w)}{o_{*i}(w)}$ elements and the distance  between two consecutive vertices  is $o_{*i}(w)$.
   \item Each fiber $p_w^{-1}(H_i\alpha_i)$ participates in $\frac{\frac{m}{d_i}}{\frac{o_N(w)}{o_{*i}(w)}}$ loops.
   \item A fiber $p_w^{-1}(H_i\alpha_i)$ participates in all the  $\frac{m}{o_N(w)}$ loops if and only if $o_{*i}(w)=d_i$.
   \item Two fibers $p_w^{-1}(H_i\alpha_i)$ and $p_w^{-1}(H_j\alpha_j)$ participate in  the same number of loops    if and only if $\frac{o_{*i}(w)}{d_i}=\frac{o_{*j}(w)}{d_j}$.
   \item For each $j$-th loop, $1 \leq j \leq \frac{m}{o_N(w)}$,  the following equation holds:
   \begin{equation}\label{eqn}
   \sum\limits_{i \in Y_j} \frac{o_{N}(w)}{o_{*i}(w)}=o_N(w)
   \end{equation}
  where $Y_j=\{1 \leq i \leq s \mid p^{-1}(H_i\alpha_i)$ participates in the $j$-th loop $\}$.
     \end{enumerate} 
  \end{lem}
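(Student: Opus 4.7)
The plan is to reduce everything to what is already proved about the combined graphs $\tilde{X}_{N,i}(w)$, together with the single new ingredient that distinguishes the HS situation: the cosets $\{H_i\alpha_i\}_{i=1}^{s}$ partition $F_n$, which means the colored fibers $\{p_w^{-1}(H_i\alpha_i)\}_{i=1}^{s}$ partition the vertex set of $\tilde{X}_N(w)$, so every vertex of $\bar X_N(w)$ receives exactly one color. Since, by Definition \ref{def_combined}, the \emph{Top} layer of each combined graph $\tilde{X}_{N,i}(w)$ is literally $\tilde{X}_N(w)$, the HS-colored graph $\bar X_N(w)$ is nothing but $\tilde{X}_N(w)$ with this canonical coloring; applying Lemma \ref{lem_o(w,i)_normal}(ii)--(iii) to the normal subgroup $N$ gives statement (i) immediately, with exactly $m/o_N(w)$ disjoint $w^{o_N(w)}$-loops.

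For (ii) and (iii), I fix $i$ and look at the vertex $H_i\alpha_i$ in \emph{Bot} of $\tilde{X}_{N,i}(w)$. By Definition \ref{defn_coloredXN}(ii) the loop at this vertex has length $o_{*i}(w)$, so Lemma \ref{lem_number_loops}(ii)--(iii) applies with $k=o_{*i}(w)$: the $o_{*i}(w)$ fibers above the visited vertices are exactly the $p_w^{-1}(H_i\alpha_i)$-colored fibers participating, and each of them contributes $o_N(w)/o_{*i}(w)$ elements to every induced loop in \emph{Top}. Translating this back to $\bar X_N(w)$ yields the contribution count in (ii); the spacing claim (consecutive $c_i$-colored vertices are $o_{*i}(w)$ apart along a loop) is read off directly from the explicit listing $Ny_i, Ny_iw, Ny_iw^2,\dots$ used in that proof, since returns to the fiber $p_w^{-1}(H_i\alpha_i)$ happen precisely every $o_{*i}(w)$ steps. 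For (iii), the fiber $p_w^{-1}(H_i\alpha_i)$ has $m/d_i$ elements by Lemma \ref{lem_basic_combined}(ii)--(iii); its elements are partitioned among the loops it participates in, and by (ii) each such loop absorbs exactly $o_N(w)/o_{*i}(w)$ of them, so the number of loops containing the fiber is the quotient $(m/d_i)/(o_N(w)/o_{*i}(w))$.

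Parts (iv) and (v) are then a purely arithmetic afterthought: the quantity in (iii) equals $m/o_N(w)$ iff $o_{*i}(w)=d_i$, giving (iv); and two such quantities are equal iff $o_{*i}(w)/d_i=o_{*j}(w)/d_j$, giving (v). For (vi), I fix a loop $\ell_j$ of length $o_N(w)$ in $\bar X_N(w)$. Because $\{H_i\alpha_i\}_{i=1}^{s}$ is a partition, the $o_N(w)$ vertices of $\ell_j$ are partitioned by color; by (ii), the colors in $Y_j$ each contribute $o_N(w)/o_{*i}(w)$ vertices to $\ell_j$; summing gives $\sum_{i\in Y_j} o_N(w)/o_{*i}(w)=o_N(w)$.

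The only point that deserves care—and where I expect the real bookkeeping to live—is the implicit identification used throughout step 2: when I speak of ``a loop of $\tilde{X}_{N,i}(w)$ in \emph{Top}'' and of ``a loop of $\bar X_N(w)$,'' these must be the \emph{same} loop of $\tilde{X}_N(w)$, and the count $o_N(w)/o_{*i}(w)$ coming from the combined graph must not depend on which of the $m/o_N(w)$ loops is considered. This is guaranteed because \emph{Top} of every $\tilde{X}_{N,i}(w)$ is literally $\tilde{X}_N(w)$ and Lemma \ref{lem_number_loops} produces the same induced-loop pattern at every vertex of the same $F_n$-orbit under $w$; still, the proof should spell out that different loops of $\ell_j$ correspond to disjoint subsets of each participating fiber, so that the total count in (iii) is indeed additive over the loops and the coloring in (vi) is indeed a partition on each $\ell_j$ separately.
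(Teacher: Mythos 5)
Your proof is correct and follows essentially the same route as the paper: parts (i)--(iii) are reduced to Lemma \ref{lem_number_loops} (and the structure of the combined graphs, Top being literally $\tilde{X}_N(w)$), parts (iv)--(v) are the same arithmetic consequences of (iii), and your partition-by-color counting for (vi) is just a cleaner one-pass version of the paper's iterative accumulation of fibers until all $o_N(w)$ vertices of the loop are counted.
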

  
 \begin{proof}
 $(i),(ii),(iii)$ result from Lemma \ref{lem_number_loops}, since  from the definition of the HS-colored graph, $\bar{X}_{N}(w)$ is just $\tilde{X}_{N}(w)$  with colored vertices.\\
 $(iv)$ results from $(iii)$: $\frac{\frac{m}{d_i}}{\frac{o_N(w)}{o_{*i}(w)}}= \frac{m}{o_N(w)}$
 if and only if $o_{*i}(w)=d_i$.\\
 $(v)$ results from $(iii)$: $\frac{\frac{m}{d_i}}{\frac{o_N(w)}{o_{*i}(w)}}= \frac{\frac{m}{d_j}}{\frac{o_N(w)}{o_{*j}(w)}}$ 
 if and only if $\frac{o_{*i}(w)}{d_i}=\frac{o_{*j}(w)}{d_j}$.\\
 $(vi)$ Consider a  $j$-th loop, $1 \leq j \leq \frac{m}{o_N(w)}$. If the fiber $p^{-1}(H_{i_0}\alpha_{i_0})$ participates in this loop, then from $(iii)$ it contributes 
 $\frac{o_N(w)}{o_{*{i_0}}(w)}$ elements. If $\frac{o_N(w)}{o_{*{i_0}}(w)}=o_N(w)$, that is  $o_{*{i_0}}(w)=1$, then Equation \ref{eqn} holds trivially for this loop. Since $w  \notin \bigcap\limits_{i=1}^{i=s}\alpha_i^{-1}H_i\alpha_i$, there is some $1 \leq i_1 \leq s$ such that $o_{*{i_1}}(w)>1$ and there are loops with more than one participating fiber. Assume the  fiber $p_w^{-1}(H_{i_1}\alpha_{i_1})$ participates in such a  loop, then from $(iii)$ it contributes 
 $\frac{o_N(w)}{o_{*{i_1}}(w)}$ elements colored in $c_{i_1}$. Next,  another  fiber,  $p_w^{-1}(H_{i_2}\alpha_{i_2})$, participates and contributes 
 $\frac{o_N(w)}{o_{*{i_2}}(w)}$ elements colored in $c_{i_2}$ (completely disjoint with the former). If $\frac{o_N(w)}{o_{*{i_1}}(w)}+\frac{o_N(w)}{o_{*{i_2}}(w)} =o_N(w)$,  Equation \ref{eqn} holds for this loop. Otherwise, if  $\frac{o_N(w)}{o_{*{i_1}}(w)}+\frac{o_N(w)}{o_{*{i_2}}(w)} <o_N(w)$, then  there is a fiber,  $p_w^{-1}(H_{i_3}\alpha_{i_3})$, that participates and contributes 
 $\frac{o_N(w)}{o_{*{i_3}}(w)}$ elements colored in $c_{i_3}$. We repeat the process until all the $o_N(w)$ vertices in this loop are counted and Equation \ref{eqn} holds  for this loop. Since the  $\frac{m}{o_N(w)}$ loops are disjoint, we do the process for each loop independently.
  \end{proof} 
  \begin{prop}\label{prop_repeat_omax}
  Let $w \in F_n \setminus \bigcap\limits_{i=1}^{i=s}\alpha_i^{-1}H_i\alpha_i$, with order $o_N(w)$.   For each $j$-th loop, $1 \leq j \leq \frac{m}{o_N(w)}$, let   $Y_j=\{1 \leq i \leq s \mid p^{-1}(H_i\alpha_i)$ participates in the $j$-th loop$\}$. Then, in  $\bar{X}_{N}(w)$, the following occurs: 
    \begin{enumerate}[(i)]
    \item The natural numbers $\{o_{*i}(w)\mid i \in Y_j\}$ are not pairwise prime.
      \item  $o_{max,j}=Max \{o_{*i}(w)\mid i \in Y_j\}$ appears at least $p$ times, where $p$ is the smallest prime dividing $o_{max,j}$.
      \item Any $o_{*l}(w)$ divides another $o_{*k}(w)$, for $k,l \in Y_j$.
      \item Any $o_{*k}(w)$ that does not properly divide any other $o_{*i}(w)$ appears at least twice, for  $i,k \in Y_j$.
        \end{enumerate} 
  \end{prop}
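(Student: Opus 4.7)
The plan is to reduce the statement, loop by loop, to the known Erd\H{o}s-type results for coset partitions of $\mathbb{Z}$ collected in Theorem \ref{theo_Z}. Fix an index $j$ with $1 \leq j \leq \frac{m}{o_N(w)}$, and consider the $j$-th loop $\ell_j$ in the HS-colored graph $\bar{X}_N(w)$, which has length $o_N(w)$ by Lemma \ref{lem_loops_s_fibers}$(i)$. The strategy is to exhibit $\ell_j$ as the graphical incarnation of a coset partition of $\mathbb{Z}$ whose moduli are exactly $\{o_{*i}(w) \mid i \in Y_j\}$, and then invoke Theorem \ref{theo_Z}.

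First I would verify that $\ell_j$ meets all the hypotheses of Lemma \ref{lem_formule_sum=on}, with the role of the colors $c_i$ played by the colors of the fibers $p_w^{-1}(H_i\alpha_i)$ with $i \in Y_j$. By Lemma \ref{lem_loops_s_fibers}$(ii)$, every fiber $p_w^{-1}(H_i\alpha_i)$ with $i \in Y_j$ contributes exactly $\frac{o_N(w)}{o_{*i}(w)}$ vertices to $\ell_j$, and any two consecutive vertices of color $c_i$ along $\ell_j$ sit at distance $o_{*i}(w)$ from each other (distances read along the loop, which is labelled $w^{o_N(w)}$). The divisibility condition $\ell cm\{o_{*i}(w) \mid i \in Y_j\} \mid o_N(w)$ follows from Lemma \ref{lem_o(w,i)}$(v)$ applied in $\tilde{X}_{H_i}(w)$, since $w^{o_N(w)}$ is a loop at every vertex in $\tilde{X}_{H_i}(w)$ (because $w^{o_N(w)} \in N \subseteq \alpha_i^{-1}H_i\alpha_i$). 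Finally, the identity $\sum_{i \in Y_j} \frac{o_N(w)}{o_{*i}(w)} = o_N(w)$ is precisely Equation \eqref{eqn} from Lemma \ref{lem_loops_s_fibers}$(vi)$, and by definition of $Y_j$ each vertex of $\ell_j$ receives exactly one color from $\{c_i \mid i \in Y_j\}$.

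Once these hypotheses are in place, Lemma \ref{lem_formule_sum=on} produces integers $r_i \in \mathbb{Z}$, $i \in Y_j$, such that $\{o_{*i}(w)\mathbb{Z} + r_i\}_{i \in Y_j}$ is a coset partition of $\mathbb{Z}$. Now each of the four statements $(i)$--$(iv)$ of the proposition is merely the translation of the corresponding statement of Theorem \ref{theo_Z} applied to this induced partition of $\mathbb{Z}$: the moduli $o_{*i}(w)$ cannot be pairwise coprime; the maximum $o_{max,j}$ repeats at least $p$ times, where $p$ is its smallest prime divisor; each $o_{*l}(w)$ divides another $o_{*k}(w)$ for $l,k \in Y_j$; and each $o_{*k}(w)$ that is not properly divided by another appears at least twice.

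I do not expect any serious obstacle; the content of the proposition is essentially ``each loop of $\bar{X}_N(w)$ is a disguised coset partition of $\mathbb{Z}$,'' and all the machinery to make this precise (Lemma \ref{lem_loops_s_fibers}, Lemma \ref{lem_formule_sum=on}, Theorem \ref{theo_Z}) has already been set up. The only point requiring a little care is checking the distance-$o_{*i}(w)$ condition between consecutive same-colored vertices along $\ell_j$, which relies on the fact that successive hits of the fiber $p_w^{-1}(H_i\alpha_i)$ occur precisely when the coset of $\alpha_i^{-1}H_i\alpha_i$ returns to itself, i.e.\ every $o_{*i}(w)$ steps along $\ell_j$; this is immediate from Definition \ref{defn_coloredXN}$(ii)$ and the description of the induced loops given in the proof of Lemma \ref{lem_number_loops}.
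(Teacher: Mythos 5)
Your proposal is correct and follows essentially the same route as the paper: use Lemma \ref{lem_loops_s_fibers} to check that each loop satisfies the hypotheses of Lemma \ref{lem_formule_sum=on}, conclude that the loop encodes a coset partition of $\mathbb{Z}$ with moduli $\{o_{*i}(w)\mid i\in Y_j\}$, and transcribe Theorem \ref{theo_Z}. The only cosmetic difference is that the paper explicitly restricts to loops with more than one participating fiber (equivalently all $o_{*i}(w)>1$, as required in Lemma \ref{lem_formule_sum=on}), while you instead spell out the $\ell cm$-divisibility hypothesis; both points are minor and your argument is sound.
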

  \begin{proof}
  We consider only loops with more than one participating fiber.
   From Lemma \ref{lem_loops_s_fibers}, any  $j$-th loop is of length $o_N(w)$ and satisfies the following three conditions:  $o_N(w)=\sum\limits_{i \in Y_j} \frac{o_{N}(w)}{o_{*i}(w)}$, each package of $\frac{o_{N}(w)}{o_{*i}(w)}$ vertices are colored in color $c_i$, $i \in Y_j$, and  any two consecutive vertices colored in the same color $c_i$ are at $o_{*i}(w)$  distance.
   So, from Lemma \ref{lem_formule_sum=on}, each such loop describes a coset partition of  $\mathbb{Z}$,
$\{o_{*i}(w)\mathbb{Z} +r_i\}_{i\in Y_j}$
$r_i \in\mathbb{Z}$.\\
   $(i),(ii),(iii),(iv)$  result then from the HS in $\mathbb{Z}$ and are a transcription of  $(i),(ii),(iii),(iv)$  from Theorem \ref{theo_Z} for a $j$-th loop. 
  \end{proof}
 
   \subsection{Main Results on the  Herzog-Sch\"onheim conjecture for free groups of finite rank} \label{sec-first-results}
  We prove the first Theorems  that provide sufficient conditions for multiplicity  in  the  coset partition of $F_n$. 
   \begin{thm}\label{theorem_HS_condition_w-exists}
Let $F_n$ be the free group on $n \geq 2$ generators. Let $\{H_i\alpha_i\}_{i=1}^{i=s}$ be a coset  partition of $F_n$ with $H_i<F_n$ of index $d_i$, $\alpha_i \in F_n$, $1 \leq i \leq s$, and $1 <d_1 \leq ...\leq d_s$.  Let $\tilde{X}_{i}$ denote the   Schreier  graph  of $H_i$,  $1 \leq i \leq s$. For $g\in F_n$,  let $o_{*i}(g)$ denote the order of $g$ relative to  $H_i\alpha_i$, $1 \leq i\leq s$.  If there exists $w \in F_n$ such that  $o_{*s}(w)=d_s$, then the index $d_s$ appears in the partition at least  $p$ times, where $p$ is the smallest prime dividing $d_s$. 
 \end{thm}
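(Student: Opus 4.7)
The plan is to pass through the HS-colored graph $\bar{X}_N(w)$, with $N=\bigcap_i N_i$, and reduce the conclusion to the Erd\H{o}s/Newman--Zn\'am statement already encoded in Proposition~\ref{prop_repeat_omax}(ii). The single elementary ingredient needed at the outset is the bound $o_{*i}(w)\le d_i$ for every $i$, which is immediate because $o_{*i}(w)$ is the length of a simple $w$-loop in the Schreier graph $\tilde{X}_i$ of $H_i$, a graph on $d_i$ vertices (see Lemma~\ref{lem_o(w,i)}(i),(iv)). Combined with the hypothesis $o_{*s}(w)=d_s$ and the ordering $d_1\le\dots\le d_s$, this gives $o_{*i}(w)\le d_i\le d_s=o_{*s}(w)$ for all $i$, so $d_s$ is the global maximum of $\{o_{*i}(w):1\le i\le s\}$; moreover any index $i$ with $o_{*i}(w)=d_s$ is automatically forced to satisfy $d_i=d_s$, which is the key bookkeeping observation.

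Next I would invoke Lemma~\ref{lem_loops_s_fibers}. Since $o_{*s}(w)=d_s$, part~(iv) guarantees that the fiber $p_w^{-1}(H_s\alpha_s)$ participates in every one of the $m/o_N(w)$ disjoint loops of length $o_N(w)$ that make up $\bar{X}_N(w)$. Fix any such loop $\ell$ and let $Y_\ell=\{i:p_w^{-1}(H_i\alpha_i)\text{ participates in }\ell\}$. Then $s\in Y_\ell$, and by the previous paragraph $o_{\max,\ell}:=\max\{o_{*i}(w):i\in Y_\ell\}=o_{*s}(w)=d_s$. Because $d_s>1$, the fiber $p_w^{-1}(H_s\alpha_s)$ contributes only $o_N(w)/d_s<o_N(w)$ vertices to $\ell$, so $\ell$ has strictly more than one participating fiber and Proposition~\ref{prop_repeat_omax}(ii) is applicable to it.

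Applying Proposition~\ref{prop_repeat_omax}(ii) to $\ell$ then yields that $o_{\max,\ell}=d_s$ occurs at least $p$ times among $\{o_{*i}(w):i\in Y_\ell\}$, where $p$ is the smallest prime dividing $d_s$. Each such occurrence is at a distinct index $i\in\{1,\dots,s\}$ (since $Y_\ell$ is a set of indices into the partition), and by the first paragraph each such $i$ satisfies $d_i=d_s$. Hence $d_s$ appears at least $p$ times in the partition, proving the theorem. I do not expect any substantive obstacle: the argument is essentially a direct distillation of the earlier lemmas, and the only mildly delicate point is the verification that $\ell$ has more than one participating fiber so that Proposition~\ref{prop_repeat_omax}(ii) is genuinely available; this is handled by the inequality $d_s>1$.
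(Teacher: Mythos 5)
Your proof is correct and follows essentially the same route as the paper: the bound $o_{*i}(w)\le d_i$ makes $d_s$ the maximum, Lemma \ref{lem_loops_s_fibers}(iv) puts the fiber $p_w^{-1}(H_s\alpha_s)$ in every loop, and Proposition \ref{prop_repeat_omax}(ii) forces at least $p$ indices with $o_{*i}(w)=d_s$, hence $d_i=d_s$. Your extra check that each loop has more than one participating fiber (via $d_s>1$) is a small point the paper leaves implicit, and it is handled correctly.
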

 \begin{proof}
 If there exists  $w \in F_n$  such that  $o_{*s}(w)=d_s$, then   $o_{max}(w)=Max\{o_{*i}(w) \mid 1 \leq i\leq s \}\;=d_{s}$.  Furthermore, from Lemma \ref{lem_loops_s_fibers}$(iv)$,  the fiber $p_w^{-1}(H_s\alpha_s)$ participates in all the  loops. Consider any $j$-th  loop. From Proposition \ref{prop_repeat_omax},  $o_{max}(w)$  appears at least $p$ times in this loop, where $p$ is the smallest prime dividing $o_{max}(w)$. That is, there are at least  $1 \leq i_1,..,i_{p-1} \leq s-1$ such that  $o_{*i_1}(w)=...=o_{*i_{p-1}}(w)=o_{max}=d_s$. But,  $o_{*i}(w)\leq d_i$, so $d_{i_1}=...=d_{i_{p-1}}=d_s$.
\end{proof}
Note that if  there exists $w \in F_n$, such that $o_{*i}(w)=d_i$  for every $1 \leq i\leq s$, then multiplicities occur as in $\mathbb{Z}$. In particular, it ensures the existence of a partition of $\mathbb{Z}$ with subgroups of indices $d_1,...,d_s$. For the partition of $F_2$ in Example \ref{ex_partitionF2-4-cycle}, $o_{*1}(ab)=d_1=2$, $o_{*2}(ab)=d_2=o_{*3}(ab)=d_3=4$. If $H_s \lhd F_n$, then there exists a $d_s$-cycle in $T_s$ if and only if $T_s\simeq \mathbb{Z}_{d_s}$.
\begin{ex}
 Consider the partition of $F_2$ given in Example \ref{ex_partitionF2-4-cycle}.  The  subgroup  of maximal index is $K$ with index  $4$ (see Figure \ref{fig-4-shheted-covering-K}). For $w=ab$, $o_K(ab)=4$  in   $\tilde{X}_{K}$ (see Figure \ref{fig-combined N-K}),  so  Theorem \ref{theorem_HS_condition_w-exists}$(i)$ ensures the HS is satisfied and in this example  the index $4$ appears exactly twice. 
 \end{ex}

We keep the same notation as in Theorem \ref{theorem_HS_condition_w-exists}.  For $g\in F_n$,  let $o_{max}(g)=Max\{o_{*i}(g) \mid 1 \leq i\leq s \}$, and   $p$  be the smallest prime dividing $o_{max}(g)$. We define  $\#=\mid \{1 \leq i\leq s \mid o_{*i}(g)=o_{max}(g)\}\mid$. 
\begin{thm}\label{theorem_HS_more-condition_w-exists}
Let $F_n$ be the free group on $n \geq 2$ generators. Let $\{H_i\alpha_i\}_{i=1}^{i=s}$ be a coset  partition of $F_n$ with $H_i<F_n$ of index $d_i$, $\alpha_i \in F_n$, $1 \leq i \leq s$, and $1<d_1 \leq ...\leq d_s$.  
 Let $r \in \mathbb{Z}$,  $4 \leq r \leq s-1$. If there exists $w \in F_n$ that satisfies one of the following conditions:
 \begin{enumerate}[(i)]
\item  $o_{max}(w)>d_{s-2}$.
 \item  $o_{max}(w)>d_{s-3}$, $p\geq 3$.
 \item  $o_{max}(w)>d_{s-3}$, $p=2$, and  $\# \geq 4$.
 \item  $o_{max}(w)>d_{s-3}$, $p=2$, and $\#=2$ 
 \item  $o_{max}(w)>d_{s-r}$,  $p\geq r$.
    \item  $o_{max}(w)>d_{s-r}$,   $\# \geq r+1$. 
  \item    $o_{max}(w)>d_{s-r}$,  $\#=p$. 
 \end{enumerate} 
 Then  the coset partition $\{H_i\alpha_i\}_{i=1}^{i=s}$ has multiplicity.
 \end{thm}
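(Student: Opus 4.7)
The plan is to argue by contradiction: assume the partition has no multiplicity, so $d_1 < d_2 < \cdots < d_s$ strictly, and derive a contradiction under each of the seven hypotheses. Set $k = o_{max}(w)$, let $p$ be the smallest prime dividing $k$, and write $S = \{\, 1 \leq i \leq s : o_{*i}(w) = k \,\}$, so that $|S| = \#$ by definition. The whole argument rests on two observations about the HS-colored graph $\bar{X}_N(w)$.

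First, since $o_{*i}(w) \leq d_i$, every index in $S$ satisfies $d_i \geq k$. The strict chain $d_1 < \cdots < d_s$ combined with the hypothesis $k > d_{s-t}$ (where $t = 2, 3$, or $r$ depending on the case) then forces $S \subseteq \{s-t+1, \ldots, s\}$, hence $|S| \leq t$. Secondly, a fiber $p_w^{-1}(H_i\alpha_i)$ with $i \in S$ can sit only in a loop $\ell$ whose local maximum $o_{max,\ell}$ equals $k$: we have $o_{max,\ell} \geq o_{*i}(w) = k$ while also $o_{max,\ell} \leq o_{max}(w) = k$. Conversely, Proposition~\ref{prop_repeat_omax}(ii) forces each such loop to contain at least $p$ members of $Y_\ell \cap S$. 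When $|S| = p$, the two bounds combine: every loop with $o_{max,\ell} = k$ must contain \emph{all} of $S$, and each fiber indexed by $S$ lives only in such loops, so all $p$ of these fibers share the same set of loops. By Lemma~\ref{lem_loops_s_fibers}(iii), the number of loops through fiber $p_w^{-1}(H_i\alpha_i)$ equals $\frac{m/d_i}{o_N(w)/k}$ for every $i \in S$, so equality of these counts forces all $d_i$ with $i \in S$ to coincide, contradicting no-multiplicity.

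With this machinery, each of the seven cases is short. Cases (iii) and (vi) are immediate pigeonhole: $\# \geq 4 > 3 \geq |S|$ in (iii) and $\# \geq r+1 > r \geq |S|$ in (vi) contradict $|S| = \#$. In the remaining cases we pin $|S|$ to $p$ and invoke the preceding paragraph. Since $S$ is non-empty, picking any loop $\ell$ meeting $S$ yields $o_{max,\ell} = k$ and hence $|S| \geq p$ by the second observation. Case (i) then gives $p \leq |S| \leq 2$, so $p = |S| = 2$; case (ii) gives $3 \leq p \leq |S| \leq 3$, so $p = |S| = 3$; case (iv) has $|S| = \# = 2 = p$ by hypothesis; case (v) gives $p \leq |S| \leq r \leq p$, so $|S| = p = r$; and case (vii) has $|S| = \# = p$ by hypothesis. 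The main obstacle is really the ``shared set of loops'' step in the middle paragraph; once that is in hand, each hypothesis is tailored precisely either to squeeze $|S|$ down to $p$ or to directly contradict $|S| \leq t$, and Lemma~\ref{lem_loops_s_fibers}(iii) closes the argument.
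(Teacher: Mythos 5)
Your proposal is correct and takes essentially the same route as the paper: the upper bound $\lvert S\rvert\le t$ is the paper's pigeonhole step, and the lower bound $\lvert S\rvert\ge p$ together with the ``shared set of loops'' argument (via Proposition \ref{prop_repeat_omax} and Lemma \ref{lem_loops_s_fibers}(iii)/(v)) is exactly how the paper handles the cases where the maximum is attained by precisely $p$ fibers. Your contradiction-based formulation with the set $S$ simply unifies the case analysis that the paper carries out explicitly for condition (i) and sketches for the remaining conditions.
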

 \begin{proof}
   $(i)$  Let  $k_w$ denote $o_{max}(w)$.  If there exists  $w \in F_n$   with $k_w>d_{s-2}$, then either $o_{*s}(w)=k_w$  or $o_{*s-1}(w)=k_w$. From Proposition \ref{prop_repeat_omax},  $o_{max}(w)$  appears at least twice, so  $o_{*s}(w)=o_{*s-1}(w)=k_w$.  If $k_w$ occurs exactly twice, then the fibers $p_w^{-1}(H_s\alpha_s)$ and  $p_w^{-1}(H_{s-1}\alpha_{s-1})$ participate together in the same loops (otherwise there would be a repetition of $k_w$ for an additional fiber). So, from Lemma \ref{lem_loops_s_fibers}$(v)$,  
   $\frac{o_{*s}(w)}{d_s}=\frac{o_{*s-1}(w)}{d_{s-1}}$ implies $d_{s-1}=d_{s}$. 
     If $k_w$ occurs  three times or more, then  there is  at least one additional fiber $p_w^{-1}(H_i\alpha_i)$, with  $o_{*i}(w)=k_w>d_{s-2}$ and $o_{*i}(w)\leq d_i \leq d_{s-2}$. So, the pigeonhole principle can be  applied: there are two values of indices $d_s$ and $d_{s-1}$ to distribute to at least three fibers, that is $d_i=d_s$ or $d_i=d_{s-1}$, and  there is a repetition of one of the indices. \\  
 $(ii)$,  $(iii)$,  $(v)$, $(vi)$ The proof relies on the pigeonhole principle as it was applied in the proof of $(i)$.\\
$(iv)$,  $(vii)$ Whenever $\#=p$, there are $p$ fibers that participate together in the same loops. So,  Lemma \ref{lem_loops_s_fibers}$(v)$ implies they have the same index. 
 \end{proof}
Theorem \ref{theo0}  and Theorem \ref{theo1} from the introduction  are  computationally speaking far easier to check than Theorems \ref{theorem_HS_condition_w-exists} and \ref{theorem_HS_more-condition_w-exists}. We show  that in fact  Theorems \ref{theo0}  and \ref{theo1} are  equivalent to  Theorems \ref{theorem_HS_condition_w-exists} and \ref{theorem_HS_more-condition_w-exists} respectively. For that, we need  the following lemma. In the following, we assume the  permutations  are decomposed into the  product of disjoint cycles.
 \begin{lem}\label{lem_o(w,i)=d}
 Let $H < F_n$ be of finite index $d$. Let $\tilde{X}_H$ be the  Schreier graph of $H$ with vertices $\tilde{x}_0,...,\tilde{x}_{d-1}$. Let $T \leq S_d$ be  the transition group of $\tilde{X}_H$. Then
 \begin{enumerate}[(i)]
\item There  exists a $d$-cycle in $T$, if and only if there exists  $w \in F_n$ such that   $o(w,j)=d$, for some $0 \leq j  \leq d-1$.
\item If there  exists a permutation in $T$ with a   $k$-cycle at some   $\tilde{x}_{j}$, then for   every $\tilde{x}_{r}$, there exists  a permutation  in $T$ with a   $k$-cycle at $\tilde{x}_{r}$.
 \end{enumerate}
 \end{lem}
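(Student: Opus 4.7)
The plan is to translate between the two languages already set up in the paper: the order $o(w,j)$ from Definition \ref{defn_order} and the cycle structure of the permutation $\phi(w) \in T$ induced by the action of $w$ on the $d$ vertices of $\tilde{X}_H$. The key observation is that, by Lemma \ref{lem_o(w,i)}(i), the set $V_{w,j}$ has exactly $o(w,j)$ elements, and these are precisely the vertices visited along the cycle of $\phi(w)$ containing $\tilde{x}_j$. In other words, the cycle of $\phi(w)$ containing $\tilde{x}_j$ has length exactly $o(w,j)$.

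For part (i), I would argue both directions using this dictionary. If $o(w,j)=d$ for some $j$, then $|V_{w,j}|=d$ by Lemma \ref{lem_o(w,i)}(i), so $V_{w,j}$ is the whole vertex set, hence the cycle of $\phi(w)$ through $\tilde{x}_j$ has length $d$; since there are only $d$ points being permuted, $\phi(w)$ is a single $d$-cycle, which lies in $T$. Conversely, if $\sigma \in T$ is a $d$-cycle, then (since $T$ is generated by the images of the generators of $F_n$ under $\phi$) there exists $w \in F_n$ with $\phi(w)=\sigma$; then the cycle of $\phi(w)$ through any vertex $\tilde{x}_j$ has length $d$, so $o(w,j)=d$.

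For part (ii), the natural move is conjugation. Assume $\phi(w)$ has a $k$-cycle at $\tilde{x}_j$, i.e.\ $o(w,j)=k$. Given any other vertex $\tilde{x}_r$, the Schreier graph is connected, so there exists $u \in F_n$ whose action takes $\tilde{x}_r$ to $\tilde{x}_j$ (for instance take $u = t_r^{-1}t_j$ with $t_r,t_j$ as in Definition \ref{def_Schreier-graph}). I claim that $o(uwu^{-1}, r) = k$, which by the dictionary above means $\phi(uwu^{-1}) \in T$ has a $k$-cycle at $\tilde{x}_r$. Indeed, $(uwu^{-1})^k = u w^k u^{-1}$ sends $\tilde{x}_r \xrightarrow{u} \tilde{x}_j \xrightarrow{w^k} \tilde{x}_j \xrightarrow{u^{-1}} \tilde{x}_r$, so it is a loop at $\tilde{x}_r$. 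For minimality, suppose $(uwu^{-1})^m = u w^m u^{-1}$ is a loop at $\tilde{x}_r$ for some $1 \le m < k$. Tracing: $\tilde{x}_r \xrightarrow{u} \tilde{x}_j \xrightarrow{w^m} \tilde{y} \xrightarrow{u^{-1}} \tilde{x}_r$. Since the Schreier automaton is bi-deterministic (Section \ref{subsec-covering-free}), the endpoint of a path from $\tilde{x}_r$ with label $u$ is unique, so $\tilde{y}=\tilde{x}_j$, meaning $w^m$ is a loop at $\tilde{x}_j$. This contradicts $o(w,j)=k$.

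The proofs are essentially formal once the cycle/order dictionary is in place; the only small subtlety, which is the main point to be careful about, is invoking (co-)determinism of the Schreier automaton in the minimality step of part (ii), so that $\tilde{y}=\tilde{x}_j$ can be concluded without ambiguity. No further combinatorics of permutations is needed beyond the observation that a cycle length at a point equals the minimal power returning that point to itself.
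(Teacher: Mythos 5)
Your proof is correct and follows essentially the same route as the paper: identify $o(w,j)$ with the length of the cycle of $\phi(w)$ through $\tilde{x}_j$, lift permutations of $T$ back to elements of $F_n$, and transport a $k$-cycle between vertices by conjugating with the transversal labels $t_j,t_r$. If anything, your explicit minimality check in part (ii) (via determinism of the Schreier automaton) is more careful than the paper's proof, which simply asserts that the conjugate $t_r^{-1}gt_r$ gives a $k$-cycle at $\tilde{x}_r$.
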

 \begin{proof}
 $(i)$ Let $w \in F_n$. Assume    $o(w,j)=d$, for some $0 \leq j  \leq d-1$. Then, from Lemma \ref{lem_o(w,i)}$(i)$,   $o(w,j)=d$, for all  $0 \leq j  \leq d-1$, that is  $w^d \in N_H$, where $N_H$ is the normal core of $H$. So, $o(N_Hw)=d$ in $^{F_n}\big/_{N_H}  \simeq T$ with $T \leq S_d$ and $w$ acts as a $d$-cycle on the right cosets of $H$. Conversely, assume  there is a $d$-cycle  $\phi$  in $T$. Let $w\in F_n$ be any inverse image of $\phi$. Then    $o(w,j)=d$,  for all  $0 \leq j  \leq d-1$.\\
 $(ii)$ Assume first that  $\sigma \in T$ is a permutation  with a   $k$-cycle at  $\tilde{x}_{0}$.  Let $g\in F_n$ be any inverse image of $\sigma$. Then there is a loop labelled $g^k$ at $\tilde{x}_{0}$. Let $w=t_r^{-1}gt_r$,  with $t_r$  the label of a minimal path from $\tilde{x}_0$ to $\tilde{x}_r$. Then there is a loop labelled $w^k$ at $\tilde{x}_{r}$ and in the   corresponding permutation in $T$ there is  a  $k$-cycle. The same proof applies for any  $\tilde{x}_{j}$. 
 \end{proof}
\begin{proof}[Proofs of Theorem \ref{theo0}  and Theorem \ref{theo1} ]
Using  Lemma \ref{lem_o(w,i)=d}$(i)$,  Theorem \ref{theo0}  and Theorem \ref{theo1} from the introduction are  the transcription in terms of cycles of Theorems \ref{theorem_HS_condition_w-exists} and \ref{theorem_HS_more-condition_w-exists} respectively.
\end{proof}
\section{The space of coset partitions of $F_n$}\label{sec_space_partitions}
Let $F_n$ be the free group on $n \geq 2$ generators.  We define $Y'$ to be  the space of coset partitions of $F_n$ (only with subgroups of finite index). For each subgroup $H$ of $F_n$ of  finite index $d>1$, there exists a partition of $F_n$ by the $d$ cosets of $H$.  Generally, if  $P \in Y'$, then $P=\{H_i\alpha_i\}_{i=1}^{i=s}$,  a coset  partition of $F_n$ with $H_i<F_n$ of index $d_i$, $\alpha_i \in F_n$, $1 \leq i \leq s$, and $1<d_1 \leq ...\leq d_s$.  To get  some intuition on  $Y'$, it is worth recalling that  the subgroup growth of $F_n$ is exponential.
\subsection{Action of $F_n$ on the space of its coset partitions}\label{subsec_action_F_n-on-space_partitions}
  There exists a  natural right action of $F_n$ on $Y'$. Indeed, if $w \in F_n$, then $P \cdot w = P'$, with $P'= \{H_i\alpha_i\,w\}_{i=1}^{i=s}$ in $Y'$. 
\begin{lem}
The natural right  action of $F_n$ on $Y'$ is faithful 
\end{lem}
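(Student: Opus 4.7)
The goal is to show that if $w \in F_n$ acts trivially on $Y'$ then $w = 1$. Equivalently, for each $w \neq 1$ I will exhibit a single coset partition $P \in Y'$ with $P \cdot w \neq P$. The key idea is to build a partition using \emph{two} distinct subgroups with different indices, so that the block sizes force any identification between $P$ and $P \cdot w$ to preserve each "type" of coset; then a size/containment argument traps $w$.

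To set this up, I would first invoke residual finiteness of $F_n$ (equivalently, the intersection of all finite-index subgroups is trivial) to obtain a finite-index subgroup $H < F_n$ with $w \notin H$. Set $d = [F_n : H] \geq 2$ and write $F_n = H \sqcup H\alpha_1 \sqcup \cdots \sqcup H\alpha_{d-1}$. Since $n \geq 2$, Nielsen--Schreier gives $\operatorname{rank}(H) = d(n-1) + 1 \geq 2$, so $H$ possesses a proper subgroup $K$ of finite index $m = [H:K] \geq 2$; fix a decomposition $H = K\beta_1 \sqcup \cdots \sqcup K\beta_m$. The collection
$$P \;=\; \{K\beta_1, \ldots, K\beta_m,\; H\alpha_1, \ldots, H\alpha_{d-1}\}$$
is manifestly a coset partition of $F_n$ (the $K$-blocks fill $H$ and the $H$-blocks fill $F_n \setminus H$), and the indices $md$ and $d$ both exceed $1$, so $P \in Y'$.

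Now I would compare $P$ and $P \cdot w$ on the single set $Hw \subseteq F_n$. Since $w \notin H$, there is a unique $j_0 \in \{1,\ldots,d-1\}$ with $Hw = H\alpha_{j_0}$, so $Hw$ appears as \emph{one} block of $P$. On the other hand, applying $w$ on the right to $H = K\beta_1 \sqcup \cdots \sqcup K\beta_m$ yields $Hw = K\beta_1 w \sqcup \cdots \sqcup K\beta_m w$, and each $K\beta_i w$ is a block of $P \cdot w$. Since $m \geq 2$, the subset $Hw$ is subdivided non-trivially in $P \cdot w$ but not in $P$. Therefore $P$ and $P \cdot w$ induce different partitions of $Hw$, which forces $P \cdot w \neq P$, as required.

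The plan faces no genuine obstacle: the ingredients are classical (residual finiteness of $F_n$ and the existence of proper finite-index subgroups of any finitely generated free group of rank $\geq 2$). The only mild verification is that the mixed collection $P$ truly is a coset partition in the sense of the paper, which is immediate from disjointness of the $H$-coset decomposition of $F_n$ and of the $K$-coset decomposition of $H$.
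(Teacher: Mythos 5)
Your proof is correct, but it takes a different route from the paper's, and the difference is worth noting. The paper argues in two lines: if $P\cdot w=P$ for \emph{every} $P\in Y'$, then in particular, for each finite-index subgroup $H$, the partition of $F_n$ into all right cosets of $H$ is fixed, which (reading $P\cdot w=P$ as fixing each coset $H_i\alpha_i$, i.e.\ $w\in\bigcap_i\alpha_i^{-1}H_i\alpha_i$, the convention used in the subsequent orbit lemma) forces $w\in H$; residual finiteness of $F_n$ then gives $w=1$. You also start from residual finiteness, choosing $H$ of finite index with $w\notin H$, but instead of quantifying over all partitions you build a single witness: the mixed partition whose blocks are the cosets of a proper finite-index $K<H$ inside $H$ together with the nontrivial cosets of $H$. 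Your containment argument is sound (the block $Hw=H\alpha_{j_0}$ of $P$ cannot equal any block of $P\cdot w$, since the $K\beta_i w$ are proper subsets of $Hw$ and $H\alpha_j w=Hw$ would force $\alpha_j\in H$), and the construction is legitimate in $Y'$ since $[F_n:K]=dm>1$ and repeated subgroups are allowed. What your approach buys is robustness: for the single-subgroup partition used implicitly by the paper, right translation by $w$ merely permutes the blocks, so that partition detects nothing if one reads equality in $Y'$ as equality of unordered sets of cosets; your two-subgroup refinement changes the block structure of the subset $Hw$ itself, so it proves faithfulness under either reading. The paper's proof is shorter but leans on the indexed (blockwise-fixed) interpretation of $P\cdot w=P$; yours is slightly longer and proves a marginally stronger statement. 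One small remark: the appeal to Nielsen--Schreier to get $\operatorname{rank}(H)\ge 2$ is unnecessary, since any nontrivial free group (even of rank $1$) has proper subgroups of finite index.
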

\begin{proof}
 Let $w \in F_n$. Then  $P \cdot w = P$ for every $P \in Y'$  if and only if  $w$ belongs to the intersection of all the subgroups of finite index of $F_n$. As $F_n$ is residually finite \cite[p.158]{robinson},  the intersection of all the subgroups of finite index of $F_n$ is trivial, so $w=1$, that is the action is faithful.
\end{proof}
Let $P=\{H_i\alpha_i\}_{i=1}^{i=s}$ in $Y'$ and let $\tilde{X}_i$ be the Schreier graph of $H_i$, $ 1 \leq i \leq s$. Let $w \in F_n$. We denote by $o_{*i}(w)$ the minimal natural number, $1 \leq o_{*i}(w) \leq d_i$, such that $w^{o_{*i}(w)}$  is  a loop  at the vertex $H_i\alpha_i$ in $\tilde{X}_{i}$ or equivalently $w^{o_{*i}(w)} \in \alpha_i^{-1}H_i\alpha_i$ (see Section 4.1).
\begin{lem}
Let  $P=\{H_i\alpha_i\}_{i=1}^{i=s}$ in $Y'$. Then $\mid Orb_{F_n}(P) \mid \leq d_1...d_s$, where $ Orb_{F_n}(P)$ denotes the orbit of $P$ under the action of $F_n$.
Furthermore, for $w \in F_n$,  $\mid Orb_{w}(P) \mid =lcm(o_{*1}(w),...,o_{*s}(w))$, where $ Orb_{w}(P)$ denotes the orbit of $P$ under the action of $\langle w \rangle$.
\end{lem}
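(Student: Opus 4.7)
The plan is to apply the orbit--stabilizer theorem in both cases and analyse the stabilizer subgroup explicitly. Set $L = \bigcap_{i=1}^{s} \alpha_i^{-1} H_i \alpha_i$. For any $w \in L$ and every index $i$, we have $\alpha_i w \alpha_i^{-1} \in H_i$, hence $H_i\alpha_i w = H_i\alpha_i$, so each component of the family $\{H_i\alpha_i\}_{i=1}^{i=s}$ is fixed individually and therefore $P \cdot w = P$. This shows $L \subseteq \mathrm{Stab}_{F_n}(P)$, so that by orbit--stabilizer
\[
|\mathrm{Orb}_{F_n}(P)| \;=\; [F_n : \mathrm{Stab}_{F_n}(P)] \;\leq\; [F_n : L].
\]
Since $L$ is the intersection of $s$ finite-index subgroups of indices $d_1,\ldots,d_s$, the standard submultiplicativity of indices (Poincaré's argument applied iteratively) gives $[F_n:L]\leq d_1\cdots d_s$, which proves the first inequality.

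For the second part, the orbit of $P$ under $\langle w\rangle$ has cardinality equal to the least positive integer $k$ such that $P\cdot w^k = P$. Applying the same pointwise-fixing analysis as above to powers of $w$, the identity $P\cdot w^k = P$ is equivalent to $H_i\alpha_i w^k = H_i\alpha_i$ for every $i$, i.e.\ to $w^k \in \alpha_i^{-1}H_i\alpha_i$ for every $1\leq i\leq s$. By the definition of $o_{*i}(w)$ as the minimal positive integer such that $w^{o_{*i}(w)} \in \alpha_i^{-1}H_i\alpha_i$, combined with Lemma~\ref{lem_o(w,i)}(v) applied in the Schreier graph of $H_i$ (which forces $o_{*i}(w)\mid k$ whenever $w^k$ is a loop at $H_i\alpha_i$), this condition becomes $o_{*i}(w)\mid k$ for every $i$. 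The smallest $k$ divisible by all $o_{*i}(w)$ is $\mathrm{lcm}(o_{*1}(w),\ldots,o_{*s}(w))$, which yields the claimed equality.

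The most delicate point, and the step where some care is needed, is the equivalence $P\cdot w^k = P \Leftrightarrow H_i\alpha_i w^k = H_i\alpha_i\ \forall i$. The nontrivial direction could in principle fail if the map $H_i\alpha_i\mapsto H_i\alpha_i w^k$ permuted two distinct cosets with the same underlying subgroup ($H_i=H_j$) rather than fixing each one. The notation $\{H_i\alpha_i\}_{i=1}^{i=s}$ is used throughout the paper as an indexed family and the action $P\cdot w = \{H_i\alpha_i w\}_{i=1}^{i=s}$ is defined componentwise, so equality of $P$ and $P\cdot w^k$ is meant componentwise, and the argument above goes through directly. This gives both assertions of the lemma without further work.
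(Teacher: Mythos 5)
Your proof is correct. For the second assertion it is essentially the paper's own argument: both reduce $P\cdot w^k=P$ to $w^k\in\bigcap_{i=1}^{s}\alpha_i^{-1}H_i\alpha_i$ and then invoke Lemma~\ref{lem_o(w,i)}(v) to translate this into $\mathrm{lcm}(o_{*1}(w),\ldots,o_{*s}(w))\mid k$, with the orbit size being the minimal such $k$. For the first assertion you take a slightly different route: the paper simply observes that the action permutes the cosets of each $H_i$ among themselves, so the orbit sits inside the set of $s$-tuples of cosets and has size at most $d_1\cdots d_s$; you instead note that $\bigcap_i\alpha_i^{-1}H_i\alpha_i$ lies in the stabilizer and combine orbit--stabilizer with Poincar\'e's bound $[F_n:\bigcap_i\alpha_i^{-1}H_i\alpha_i]\leq d_1\cdots d_s$. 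Both are one-line arguments giving the same bound, so the difference is cosmetic. Your closing remark on componentwise equality is well taken: the equivalence $P\cdot w^k=P\Leftrightarrow w^k\in\bigcap_i\alpha_i^{-1}H_i\alpha_i$ genuinely requires reading $P$ as an indexed family rather than as an unordered set of cosets (for $P=\{H,Ha\}$ with $[F_n:H]=2$, right multiplication by $a$ preserves the partition as a set yet $a\notin H$, which would break the lcm formula), and the paper's proof relies on this reading implicitly, as does its faithfulness lemma; your resolution is consistent with the paper's conventions.
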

\begin{proof}
From the definition of the action of $F_n$ on $P$, $F_n$ permutes between the cosets of $H_1$, between the cosets of $H_2$ and so on. So, $\mid Orb_{F_n}(P) \mid \leq d_1...d_s$. The size of $ Orb_{w}(P)$ is equal to the minimal natural number such that  $P \cdot w^k = P$, that is  $k=lcm(o_{*1}(w),...,o_{*s}(w))$. Indeed,     $P \cdot w^k = P$ $\Leftrightarrow$ $w^k \in \bigcap\limits^{i=s}_{i=1}\alpha_i^{-1}H_i\alpha_i$ $\Leftrightarrow$ $lcm(o_{*1}(w),...,o_{*s}(w))$ divides $k$ (Lemma 4.10). 
As $k=lcm(o_{*1}(w),...,o_{*s}(w))$ is minimal such that  $P \cdot w^k = P$,   $\mid Orb_{w}(P) \mid =lcm(o_{*1}(w),...,o_{*s}(w))$.
\end{proof}

\begin{proof}[Proof of Theorem \ref{theo2}]
$(i)$, $(ii)$ From the assumption, there exists  $w \in  \bigcap \limits_{i\neq j,k}\alpha_i^{-1}H_i\alpha_i$, $w \notin \bigcap \limits_{i=1}^{i=s}\alpha_i^{-1}H_i\alpha_i$. Then $P \cdot w=\;\{H_i\alpha_iw\}_{i=1}^{i=s}$ gives $F_n=\bigcup\limits_{i \neq j,k} H_i\alpha_i \cup H_j\alpha_jw\cup H_k\alpha_kw$.
So, $H_j\alpha_jw\cup H_k\alpha_kw=H_j\alpha_j\cup H_k\alpha_k$, with $H_j\alpha_jw \neq H_j\alpha_j$ and $H_k\alpha_kw\neq H_k\alpha_k$. As $H_j\alpha_jw \cap H_j\alpha_j =\emptyset$  and $H_k\alpha_kw\cap H_k\alpha_k =\emptyset$, $H_j\alpha_jw \subseteq H_k\alpha_k $ and $H_k\alpha_kw \subseteq H_j\alpha_j$. From $H_j\alpha_jw\cup H_k\alpha_kw=H_j\alpha_j\cup H_k\alpha_k$ again, we have $H_j\alpha_jw= H_k\alpha_k$ and $H_j\alpha_j= H_k\alpha_kw$, that is $H_k\alpha_kw\alpha_j^{-1}=H_j$ a subgroup, so $H_k\alpha_kw\alpha_j^{-1}=H_k$, that is $H_k=H_j$. Furthermore,    $o_{*j}(w)=o_{*k}(w)=2$,  $w^2 \in  \bigcap \limits_{i=1}^{i=s}\alpha_i^{-1}H_i\alpha_i$.
\end{proof} 

If  the condition in  Theorem \ref{theo2} holds, then  $m$, $m_j$, $m_k$  are necessarily even.

  \begin{ex} \label{ex_partitionF2-no4cycle}
    Consider another  partition of $F_2$, with all the subgroups normal in $F_2$: $F_2= H_1 \bigcup H_2a \bigcup H_3ab$, where $H_1=H=\langle b,a^2,aba\rangle$ and $H_2=H_3=M$ with $\tilde{X}_{M}$ described below:
    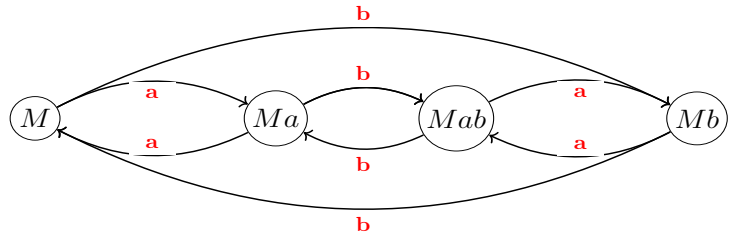
\begin{figure}[H]
     \centering\scalebox{0.8}[0.7]{\begin{tikzpicture}
     \SetGraphUnit{4}
      \tikzset{VertexStyle/.append  style={fill}}
       \Vertex[L=$M$, x=-4,y=0]{A}
       \Vertex[L=$Ma$, x=0, y=0]{B}
     
     \Vertex[L=$Mab$, x=3, y=0]{C}
      \Vertex[L=$Mb$, x=7, y=0]{D}
     \Edge[label = a, labelstyle = below](A)(B)
      \Edge[label =a, labelstyle =above](B)(A)
     \Edge[label = b, labelstyle = above](B)(C)
     \Edge[label = b, labelstyle = below](C)(B)
      \Edge[label = b, labelstyle = above](B)(C)
       \Edge[label = a, labelstyle = below](C)(D)
     \Edge[label = a, labelstyle = above](D)(C)
     \Edge[label = b, labelstyle = above](A)(D)
       \Edge[label = b, labelstyle = below](D)(A)
   \end{tikzpicture}}
   
      \caption{$\tilde{X}_{M}$ for the normal subgroup $M$ of index $4$.}\label{fig-Mnormal-no-4cycle}
       \end{figure}
  The  subgroup  of maximal index is $M$ with index  $4$ (see Figure \ref{fig-Mnormal-no-4cycle}).  The transition group $T_M$  of $\tilde{X}_{M}$ is isomorphic to $\mathbb{Z}_2\times \mathbb{Z}_2$. So, there is no $4$-cycle in $T_M$,  no cycle of length greater than $2$ ($2=d_{1}=d_{s-2}$) neither, and  Theorem \ref{theo0} cannot apply.  Nevertheless, Theorem \ref{theo2} applies. Indeed, the subgroup $\bigcap\limits_{i=1}^{i=3}H_i\subsetneq H_1$, and this implies $H_2=H_3$.
   \end{ex}  
We recall that for each subgroup $H$ of index $d$ in $F_n$ (or in any group), there is a transitive action of the group on the set of right cosets of $H$, that is given two cosets $H\alpha$ and  $H\beta$ of $H$, there exists $w$ such that $H\alpha\cdot w=\,H\beta$. So, the following question arises:
\begin{ques}\label{ques}
Let $P=\{H_i\alpha_i\}_{i=1}^{i=s}$ and  $P'=\{H_i\beta_i\}_{i=1}^{i=s}$ in $Y'$. Does  there necessarily exist $w \in F_n$ such that $P'=P\cdot w$ ?
\end{ques}

\subsection{Topology in the space of coset partitions of $F_n$}
We refer to \cite{munkres} for more details.
Let  $Y'$  be  the space of coset partitions of $F_n$. Given $P=\{H_i\alpha_i\}_{i=1}^{i=s}$ in $Y'$, with $d_s \geq ...\geq d_1>1$, we identify $P$ with the $s$-tuple $(H_s,...,H_1)$ and we consider $H_s$ at the first place, $H_{s-1}$ at the second place and so on. Let  $P'\in Y'$, $P'= \{K_i\beta_i\}_{i=1}^{i=t}$. 
We define a function $d: Y' \times Y' \rightarrow\mathbb{R}$ :
\[ d(P,P')= \left\{\begin{array}{ccc}
2^{-k} & & \text{if } k\, \text{is  the first place at which } K_i \neq H_i\\
0 & & \text{if }  t=s;\; \;\;H_i=K_i, \,\;\forall 1 \leq i \leq s \\
\end{array} \right.
\]
The  function $d: Y' \times Y' \rightarrow\mathbb{R} \cup \{\infty\}$ is a \emph{semi-metric} if for all $P,P',P'' \in Y'$, $d$  satisfies  $d(P,P')=d(P',P)$ (symmetry) and  $d(P,P'')\, \leq \,d(P,P')\,+d(P',P'')$ 
(triangle inequality). A standard argument shows:

\begin{lem}
The function $d$ is a semi-metric.
\end{lem}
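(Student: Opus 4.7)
The plan is to verify the two axioms separately, noting that both follow straightforwardly from the definition once the combinatorial setup is in place. For symmetry, observe that the condition ``$k$ is the first place at which $H_i \neq K_i$'' is manifestly symmetric in $P$ and $P'$, and the vanishing case $t=s$ with $H_i=K_i$ for all $i$ is likewise symmetric. Thus $d(P,P')=d(P',P)$ is immediate from inspection of the two branches of the definition.

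For the triangle inequality, I would first prove the stronger ultrametric inequality
\[
d(P,P'') \;\leq\; \max\bigl(d(P,P'),\,d(P',P'')\bigr),
\]
from which the ordinary triangle inequality follows since $\max(a,b)\leq a+b$ for $a,b\geq 0$. Write $P=(H_s,\ldots,H_1)$, $P'=(K_t,\ldots,K_1)$, $P''=(L_u,\ldots,L_1)$ under the identification fixed in the text. Let $k_1$ be the first index where the tuples of $P$ and $P'$ disagree, and $k_2$ the first index where those of $P'$ and $P''$ disagree, so that $d(P,P')=2^{-k_1}$ and $d(P',P'')=2^{-k_2}$ (with the convention $2^{-\infty}=0$ in the equality case). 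Set $m=\min(k_1,k_2)$. For every $i<m$ we have $H_i=K_i$ and $K_i=L_i$, so $H_i=L_i$; hence the first disagreement between $P$ and $P''$ occurs at some index $k\geq m$, giving $d(P,P'')=2^{-k}\leq 2^{-m}=\max(2^{-k_1},2^{-k_2})$.

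The boundary case to be careful about is when one of $d(P,P'),d(P',P'')$ equals zero, in which case $P=P'$ or $P'=P''$ in the sense of the tuple identification, and the inequality is trivial. A second minor case to check is when two of the three partitions have different lengths; the convention for reading off ``the first place of disagreement'' extends unambiguously because a missing entry in the shorter tuple counts as a disagreement, and the argument above applies verbatim.

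I do not anticipate any significant obstacle: the metric is essentially a truncated prefix-distance on sequences, and such functions are well known to be ultrametric. The only point requiring mild care is the bookkeeping when the partitions have unequal lengths, but this is handled uniformly by interpreting the comparison position-by-position in the ordered tuple representation fixed in the paper.
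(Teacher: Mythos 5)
Your proof is correct and follows essentially the same route as the paper: both arguments reduce to the observation that if $P,P'$ first disagree at place $k_1$ and $P',P''$ at place $k_2$, then $P,P''$ agree at every place before $\min(k_1,k_2)$, so $d(P,P'')\leq 2^{-\min(k_1,k_2)}$. Your explicit statement of the ultrametric inequality, together with the handling of the zero-distance and unequal-length cases, is in fact slightly cleaner than the paper's case split (the paper asserts $m=\min\{k,\ell\}$, which can fail when $k=\ell$, though its inequality still holds).
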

\begin{proof}
Let $P,P',P'' \in Y'$. Clearly,  $d(P,P')=d(P',P)$. Assume  $d(P,P')=2^{-k}$,  $d(P',P'')=2^{-\ell}$, and $d(P,P'')=2^{-m}$. If $k>1$ or $\ell>1$, then   $m=min\{k, \ell\}$  and   $d(P,P'')=2^{-(min\{k,\ell\})} \, \leq \,2^{-k}\,+2^{-\ell}$. If $k=\ell=1$, then $m \geq 1$ and $d(P,P'')=2^{-m}<1$.
\end{proof}
A \emph{metric} is a semi-metric with 
the additional requirement that $d(P,P') = 0$  implies $P=P'$.
Identifying points with zero distance in a semi-metric $d$ is an equivalence relation that leads to a  metric $\hat{d}$.    The function $\hat{d}$ is then a metric in  $Y'\big/\equiv$, with $P\equiv P'$ if and only if $d(P,P') = 0$. If the answer to Question \ref{ques} is positive, then 
 $Y'\big/\equiv$ is the same as the quotient of $Y'$ by the action of $F_n$. We denote $Y'\big/\equiv$  by $Y$ and  $\hat{d}$ by  $\rho$.

We denote by $B_r(P_0)=\{P \in Y \mid \rho(P,P_0)<2^{-r} \}$, the open ball of radius $2^{-r}$ centered at $P_0$. A set $U \subset Y$  is open if and only if for every point $P \in U$,  there exists $\epsilon>0$ such that $B_\epsilon(P)\subset U$. A space $Y$ is \emph{totally disconnected} if every two distinct points of $Y$ are contained in two disjoint open sets covering the space.
A point $P$ in a metric space   $Y$ is 
 an \emph{isolated point} of $Y$ if there exists a real number $\epsilon>0$, such that $B_\epsilon(P)=\{P\}$. If all the points in  $Y$ are isolated, then $Y$ is \emph{discrete}. The space $Y$  is  \emph{(topologically) discrete} if $Y$ is  discrete as a topological space, that is the  metric  may be different from the  discrete metric.
\begin{thm}
The metric space $Y$ is (topologically) discrete.
\end{thm}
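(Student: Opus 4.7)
The plan is to show that each point of $Y$ is isolated, directly from the metric formula. Fix $P_0 \in Y$ with representative $\{H_i\alpha_i\}_{i=1}^{s}$, identified with the $s$-tuple $(H_s, H_{s-1}, \dots, H_1)$. The key observation is that the metric $\rho$ only takes values of the form $2^{-k}$ for positive integers $k$, and the value $k$ is capped by the position at which the two tuples first disagree. Since $P_0$ has only $s$ slots, any partition $P$ that agrees with $P_0$ on positions $1, \dots, s+1$ is forced to have exactly $s$ slots filled with exactly the same subgroups as $P_0$, hence $P \equiv P_0$ and they represent the same class in $Y$.

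Concretely, I propose to take $\varepsilon = 2^{-(s+1)}$ and prove $B_{\varepsilon}(P_0) = \{P_0\}$. Let $P \in B_{\varepsilon}(P_0)$ with representative $\{K_j\beta_j\}_{j=1}^{t}$, identified with $(K_t, \dots, K_1)$. If $P \not\equiv P_0$, then $\rho(P,P_0) = 2^{-k}$ for some integer $k \geq 1$, where $k$ is the first position of disagreement. The inequality $\rho(P,P_0) < 2^{-(s+1)}$ gives $k \geq s+2$, i.e.\ $P$ and $P_0$ coincide on positions $1, \dots, s+1$. Since position $s+1$ of $P_0$ is empty, position $s+1$ of $P$ must also be empty, forcing $t = s$; then $K_i = H_i$ for every $1 \leq i \leq s$, and the second clause of the semi-metric definition yields $d(P,P_0)=0$, i.e.\ $P \equiv P_0$, contradicting $P \ne P_0$ in $Y$. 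Hence $P_0$ is isolated, and since $P_0$ was arbitrary, $Y$ is topologically discrete.

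The argument has no genuine obstacle; the only subtle point is the convention for $k$ when the two partitions have different lengths. I would briefly note that when $t \neq s$ and the two tuples agree up to the shorter length, the natural reading of ``the first place at which $K_i \neq H_i$'' is $k = \min(s,t)+1$, so the metric is well defined on pairs of unequal length and the case analysis above is exhaustive. The proposition then follows immediately, and it is worth pointing out that the choice of radius $\varepsilon$ depends on $s$ (the number of parts of $P_0$), so $Y$ is discrete topologically even though $\rho$ is not the discrete metric.
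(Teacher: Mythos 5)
Your proof is correct and follows essentially the same route as the paper: the paper's proof also isolates each point $P_0=\{H_i\alpha_i\}_{i=1}^{i=s}$ by taking a ball of radius depending on $s$ (radius smaller than $2^{-(s+1)}$) and noting it reduces to $\{P_0\}$. Your write-up merely makes explicit the bookkeeping about tuples of different lengths, which the paper leaves implicit.
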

\begin{proof}
We show that all the points in $Y$ are isolated. Let 
$P=\{H_i\alpha_i\}_{i=1}^{i=s}$ in $Y$, with $d_s \geq ...\geq d_1>1$. Then for  $\epsilon < 2^{-(s+1)}$, $B_\epsilon(P)=\{P\}$.
\end{proof}
This implies that  $Y$ is Hausdorff, bounded and totally disconnected,
 facts that could be easily proved directly using $\rho$. A metric space $X$ is \emph{uniformly discrete}, if there exists $\epsilon > 0$ such that for any $x,x' \in X$, $x \neq x'$, $\rho(x,x')>\epsilon$. The space $Y$ is not uniformly discrete.  
 \begin{rem}
 Given an arbitrary group $G$, one can define in the same way the space $Y$, the metric $\rho$ and obtain the same topological properties. The action of $G$ on $Y$ can also be defined in the same way, but it is not necessarily faithful anymore.
 \end{rem}

\begin{proof}[Proof of Theorem \ref{theo4}]

Let $P_0=\{H_i\alpha_i\}_{i=1}^{i=s}$,  with  $1<d_{H_1} \leq ...\leq d_{H_s}$. Let $P \in Y$, $P=\{K_i\beta_i\}_{i=1}^{i=t}$, with  $1<d_{K_1} \leq ...\leq d_{K_t}$.\\
$(i)$  If $\rho(P, P_0)< \frac{1}{2}$, then $K_t=H_s$. So, if  there exists a $d_s$-cycle in $T_{H_s}$, the index $d_s$ appears in $P_0$ and in $P$ at least $p$ times, where $p$ is the least prime dividing $d_s$. Note that   this implies necessarily  $\rho(P, P_0) \leq  2^{-p-1}$.\\
$(ii)$, $(iii)$ If $\rho(P, P_0)< 2^{-(r+1)}$, $2 \leq r \leq s-1$, then $K_t=H_s$,  $K_{t-1}=H_{s-1}$,...,  $K_{t-r}=H_{s-r}$. 
If $k$,  the maximal length of a cycle in $\bigcup\limits_{i=1}^{i=s}T_{H_i}$,  satisfies   $k>d_{H_{s-r}}$, then  $k>d_{K_{t-r}}$ also. Furthermore, $k$ occurs in  $\bigcup\limits_{i=s-r}^{i=s}T_{H_i}$ and also in $\bigcup\limits_{j=t-r}^{j=t}T_{K_j}$, since  $\bigcup\limits_{i=s-r}^{i=s}T_{H_i}\,=\,
\bigcup\limits_{j=t-r}^{j=t}T_{K_j}$. 
If $P_0$ satisfies condition $(i)$ or $(ii)$ of Theorem \ref{theo1}, then $P$   satisfies  the same condition and hence has multiplicity. 

 If $P_0$ satisfies condition $(iii)$ or $(iv)$ of Theorem \ref{theo1}, then $P_0$  has multiplicity, with $d_{H_{s-i}}=d_{H_{s-j}}$ for some $i \neq j$, $0 \leq i,j\leq r$. So, $d_{K_{t-i}}=d_{K_{t-j}}$ also, that is $P$ has multiplicity.
\end{proof}

To conclude, we ask the  following natural question: does there exist a metric on the space of coset partitions $Y'$ that induces a non-discrete topology and yet can give rise to a result of the form of Theorem \ref{theo3} ?

 \section{The  Herzog-Sch\"onheim conjecture for any finitely generated group}
 
  Let $G$ be the group with  presentation $\langle X \mid R \rangle$, with $\mid X \mid =n$. So, $G \simeq\, ^{F_n}\big/_{K}$, with the normal  closure of the set $R$  (not necessarily finite) equal to $K$. There exists the canonical epimorphism $\pi: F_n \rightarrow G$. We show that this partition of $G$ induces a partition of $F_n$ with subgroups of the same indices. 
    \begin{lem}
    Let $G \simeq\,^{F_n}\big/_{K}$,  with canonical epimorphism $\pi: F_n \rightarrow G$.  Let $\{K_ig_i\}_{i=1}^{i=s}$ be a coset  partition of $G$ with $K_i<G$ of index $d_i>1$, $g_i \in G$, $1 \leq i \leq s$.  Let $H_i=\pi^{-1}(K_i)$ and $\alpha_i=\pi^{-1}(g_i)$. Then  $\{H_i\alpha_i\}_{i=1}^{i=s}$ is   a  coset  partition of $F_n$,  with $H_i<F_n$ of index $d_i$, $\alpha_i \in F_n$, $1 \leq i \leq s$.
       \end{lem}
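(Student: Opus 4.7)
The plan is to lift the partition along $\pi$ by showing that each coset $H_i\alpha_i$ in $F_n$ is exactly the full preimage under $\pi$ of the corresponding coset $K_ig_i$ in $G$; the three required properties (correct index, covering, disjointness) then follow immediately by applying $\pi^{-1}$ to the same properties downstairs.

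First I would observe that since $K=\ker\pi$ is contained in $H_i=\pi^{-1}(K_i)$, the correspondence theorem gives that $H_i$ is a subgroup of $F_n$ and that the map $H_ix\mapsto K_i\pi(x)$ is a well-defined bijection between right cosets of $H_i$ in $F_n$ and right cosets of $K_i$ in $G$; in particular $[F_n:H_i]=[G:K_i]=d_i$. Next, for any choice of preimage $\alpha_i\in\pi^{-1}(g_i)$, I would verify the key identity
\[
H_i\alpha_i \;=\; \pi^{-1}(K_ig_i).
\]
The inclusion $\subseteq$ is immediate from $\pi(H_i\alpha_i)=K_ig_i$. For $\supseteq$, if $x\in\pi^{-1}(K_ig_i)$ then $\pi(x)=kg_i$ for some $k\in K_i$, hence $\pi(x\alpha_i^{-1})=k\in K_i$, i.e.\ $x\alpha_i^{-1}\in H_i$ and $x\in H_i\alpha_i$.

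With this identification in hand, I would transport the partition of $G$ to $F_n$ by applying $\pi^{-1}$. For covering:
\[
\bigcup_{i=1}^{s}H_i\alpha_i \;=\; \bigcup_{i=1}^{s}\pi^{-1}(K_ig_i) \;=\; \pi^{-1}\!\left(\bigcup_{i=1}^{s}K_ig_i\right) \;=\; \pi^{-1}(G) \;=\; F_n.
\]
For disjointness, for $i\neq j$,
\[
H_i\alpha_i\cap H_j\alpha_j \;=\; \pi^{-1}(K_ig_i)\cap\pi^{-1}(K_jg_j) \;=\; \pi^{-1}(K_ig_i\cap K_jg_j) \;=\; \pi^{-1}(\emptyset) \;=\; \emptyset.
\]
Together with the index computation, this shows $\{H_i\alpha_i\}_{i=1}^{s}$ is a coset partition of $F_n$ with indices $d_1,\dots,d_s$, as required.

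There is essentially no obstacle here: the statement is a direct application of the lattice/correspondence theorem combined with the functorial behavior of $\pi^{-1}$ with respect to union and intersection. The only point requiring a brief verification is the set-theoretic equality $H_i\alpha_i=\pi^{-1}(K_ig_i)$, which is where the ambiguity in choosing $\alpha_i\in\pi^{-1}(g_i)$ is absorbed (any preimage works, since different choices differ by an element of $K\subseteq H_i$ and thus yield the same right coset of $H_i$).
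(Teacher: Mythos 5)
Your proof is correct and follows essentially the same route as the paper: both rest on the correspondence theorem for subgroups containing $K=\ker\pi$ and on the fact that each coset $H_i\alpha_i$ is precisely the full preimage $\pi^{-1}(K_ig_i)$. The paper checks covering and disjointness by element-wise contradiction arguments, while you package the identical computation into the identity $H_i\alpha_i=\pi^{-1}(K_ig_i)$ and use compatibility of $\pi^{-1}$ with unions and intersections, which is merely a cleaner organization of the same idea.
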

   \begin{proof}
   From the third isomorphism theorem, there is a one-to-one correspondence between the subgroups of $G$ and the subgroups of $F_n$ containing $\operatorname{Ker}(\pi)=K$. We set  $H_i=\pi^{-1}(K_i)$, so $H_i$ are subgroups of $F_n$ of index $d_i$ containing $K$. Let $\alpha_i=\pi^{-1}(g_i)$, that is $\alpha_i$ are  words in $F_n$ representing the elements $g_i\in G$, in particular we can choose the words $g_i$.  We show  $\{H_i\alpha_i\}_{i=1}^{i=s}$ is   a  coset  partition of $F_n$. First, assume by contradiction that $\bigcup\limits_{i=1}^{i=s}H_i\alpha_i \subsetneqq F_n$. So, there exists $w \in F_n$ such that $w \notin\bigcup\limits_{i=1}^{i=s}H_i\alpha_i$. The element $\pi(w)=Kw \in G$, so there exists $1 \leq j \leq s$ such that $Kw\in K_jg_j$. Since $K_jg_j=(^{H_j}\big/_{K})\alpha_j$, there exists $h_j \in H_j$, such that $Kw= Kh_j\alpha_j$, that is $w\in Kh_j\alpha_j$. As $K \subset H_j$, 
   $w \in H_j\alpha_j$.  So, a contradiction, that is $\bigcup\limits_{i=1}^{i=s}H_i\alpha_i =F_n$.  Next, we show that the cosets $H_i\alpha_i$, $1 \leq i \leq s$, are disjoint. Assume   by contradiction that there is $w \in H_i\alpha_i \cap H_j\alpha_j$, $i\neq j$.  So, 
   there exist $h_j \in H_j$, and  $h_i \in H_i$, such that  $w= h_j\alpha_j=h_i\alpha_i$, and $Kw= Kh_j\alpha_j= Kh_i\alpha_i$, with $Kh_j\alpha_j \in (^{H_j}\big/_{K})\alpha_j=K_jg_j$ and $Kh_i\alpha_i \in (^{H_i}\big/_{K})\alpha_i=K_ig_i$. So, a contradiction, that is  the cosets $H_i\alpha_i$, $1 \leq i \leq s$, are disjoint.
  \end{proof}

  \begin{proof}[Proof of Theorem \ref{theo5}]
 If one of the conditions from  Theorem \ref{theo0}, \ref{theo1}, or \ref{theo2}
   is satisfied,  then in the induced partition  $\{H_i\alpha_i\}_{i=1}^{i=s}$, with $H_i$ subgroups of $F_n$ of index $d_i$,  there exists an  index $d_j$ that appears  at least  twice, and this is also true for the partition of $G$.
  \end{proof}
Note that whenever the coset partition $\{H_i\alpha_i\}_{i=1}^{i=s}$ has multiplicity, with $d_i=d_j$ say, then  $H_i$ and $H_j$ are isomorphic. This does not hold necessarily for any finitely generated group $G$. Indeed, if the isomorphism $\varphi_{ij}$ between any pair of isomorphic subgroups $H_i$ and $H_j$ satisfies  $\varphi_{ij}(Ker(\pi))=Ker(\pi)$, then the corresponding subgroups $K_i$ and $K_j$ are also isomorphic.

\bigskip\bigskip\noindent
{ Fabienne Chouraqui,}

\smallskip\noindent
University of Haifa at Oranim, Israel.
\smallskip\noindent
E-mail: {\tt fabienne.chouraqui@gmail.com} {\tt fchoura@sci.haifa.ac.il}

\end{document}